\numberwithin{equation}{section}
\newtheorem{corollary}{Corollary}[section]
\newtheorem{example}[corollary]{Example}
\newtheorem{prop}[corollary]{Proposition}
\newtheorem{rem}[corollary]{Remark}
\newtheorem{thm}[corollary]{Theorem}
\newtheorem{cor}[corollary]{Corollary}
\newfont{\sBlackboard}{msbm10 scaled 900}
\newcommand{\mylabel}[1]{\label{#1}
            \ifx\undefined\stillediting
            \else \fbox{$#1$}\fi }
\newcommand{\BE}{\begin{equation}}
\newcommand{\EEQ}{\end{equation}}
\newcommand{\rfb}[1]{\mbox{\rm
   (\ref{#1})}\ifx\undefined\stillediting\else:\fbox{$#1$}\fi}
\newfont{\Blackboard}{msbm10 scaled 1200}
\newfont{\roma}{cmr10 scaled 1200}
\def\RR{{\mathbb R} }
\newcommand{\NN}{\mathbb{N}}
\newcommand{\bb}{\begin{equation}}
\newcommand{\bbb}{\end{equation}}
\DeclareMathOperator{\divv}{div}
\newcommand{\mm}    {{\hbox{\hskip 0.5pt}}}
\newcommand{\bluff} {{\hbox{\raise 15pt \hbox{\mm}}}}
\def\section{\@startsection {section}{1}{\z@}{-3.5ex plus -1ex minus
    -.2ex}{2.3ex plus .2ex}{\large\bf}}
\begin{document}

\title[Global multiplicity for parametric anisotropic Neumann $(p,q)$-equations]
{Global multiplicity for parametric anisotropic Neumann $(p,q)$-equations}
\author[Nikolaos S. Papageorgiou - Vicen\c tiu D. R\u adulescu - Du\v san D. Repov\v s]{Nikolaos S. Papageorgiou - Vicen\c tiu D. R\u adulescu -\\ Du\v san D. Repov\v s}
\address
{\textsc{Nikolaos S. Papageorgiou}\\
Department of Mathematics\\
National Technical University\\
Zografou  Campus, Athens 15780\\
GREECE.}
\email{npapg@math.ntua.gr}
\address{\textsc{Vicen\c tiu D. R\u adulescu (Corresponding author)}\\
Faculty of Applied Mathematics\\
AGH University of Science and Technology\\
al. Mickiewicza 30, 30-059 Krakow\\
POLAND\\
\&\\
Department of Mathematics\\
University of Craiova\\
200585 Craiova\\
ROMANIA\\
\&\\
China-Romania Research Center in Applied Mathematics}
\email{radulescu@inf.ucv.ro}
\address
{\textsc{Du\v san D. Repov\v s}\\
Faculty of Education and Faculty of Mathematics and Physics\\
University of Ljubljana\\
1000 Ljubljana\\
SLOVENIA\\
\&\\
Institute for Mathematics, Physics and Mechanics\\
1000 Ljubljana\\
SLOVENIA}
\email{dusan.repovs@guest.arnes.si}
%\submittedby {}
%\dedicatory {This is dedicated to ....}
%  Math Subject Classifications
\subjclass[2020] {Primary: 35A16, 35J20; Secondary: 03H05, 35J70, 47J30, 58E05}
\keywords{anisotropic operator; superlinear reaction; positive and nodal solutions; critical groups}
 
\begin{abstract}%
	We consider a Neumann boundary value problem driven by the anisotropic $(p,q)$-Laplacian plus a parametric potential term. The reaction is ``superlinear". We prove a global (with respect to the parameter) multiplicity result for positive solutions. Also, we show the existence of a minimal positive solution and finally, we produce a nodal solution.
\end{abstract}
\maketitle

\section{Introduction} 		      % don't type final punctuation
Let $\Omega\subseteq\RR^N$ be a bounded domain with $C^2$-boundary $\partial\Omega$. In this paper, we study the following parametric anisotropic Neumann $(p,q)$-equation:
\begin{equation}\tag{\mbox{$P_\lambda$}}
\left\{%an
\begin{array}{lll}
-\Delta_{p(z)} u-\Delta_{q(z)} u+\lambda |u|^{p(z)-2}u= f(z,u) \text{ in } \Omega,\\
\displaystyle\frac{\partial u}{\partial n}=0 \mbox{ on }\partial\Omega,\ \lambda\in\RR.
\end{array}
\right.
\end{equation}

Given $r\in C(\overline{\Omega})$ with $1<\displaystyle{\min_{\overline{\Omega}} r}$, we denote by $\Delta_{r(z)}$  the anisotropic $r$-Laplace differential operator defined by
$$
\Delta_{r(z)}u=\divv\left(|Du|^{r(z)-2}Du\right) \mbox{ for all }u\in W^{1,r(z)}(\Omega).
$$

In contrast to the isotropic $r$-Laplacian (that is, if $r(\cdot)$ is constant), the anisotropic one is not homogeneous and this is a source of difficulties in the study of anisotropic problems. Equation $(P_\lambda)$ is driven by the sum of two such operators. So, even when the exponents are constant functions (isotropic operators), the differential operator of the problem is not homogeneous. There is also a parametric potential term $u\mapsto\lambda|u|^{p(z)-2}u$ with $\lambda\in\RR$ being the parameter. Note that $\lambda$ need not be positive and so the differential operator is not in general coercive. In the reaction (right-hand side of problem $(P_\lambda)$), we have a Carath\' eodory function $f(z,x)$ (that is, for all $x\in \RR$ the mapping $z\mapsto f(z,x)$ is measurable and for a.a. $z\in\Omega$, the function $x\mapsto f(z,x)$ is continuous), which is $(p_+-1)$-superlinear as $x\to\pm\infty$ (here, $p_+=\displaystyle{\max_{\overline{\Omega}}p}$ for $p\in C^{0,1}(\overline{\Omega})$). However, we do not use the Ambrosetti-Rabinowitz condition (the $AR$-condition for short), which is common in the literature when dealing with ``superlinear" problems. Our condition on $f(z,\cdot)$ is less restrictive and incorporates in our framework, also superlinear nonlinearities with ``slower" growth near $\pm\infty$.

Our aim is to study the changes in the set of positive solutions as the parameter $\lambda\in\RR$ moves on the real line. We prove a global multiplicity result (a bifurcation-type result for large values of the parameter). More precisely, we show the existence of a critical parameter value $\lambda_*>-\infty$ such that
\begin{itemize}
  \item for all $\lambda>\lambda_*$, problem $(P_\lambda)$ has at least two positive smooth solutions;
  \item for $\lambda=\lambda_*$, problem $(P_\lambda)$ has at least one positive smooth solution;
  \item for $\lambda\in(0,\lambda_*)$, problem $(P_\lambda)$ has no positive solution.
\end{itemize}

We also establish that for all $\lambda\in [\lambda_*,\infty)$, problem $(P_\lambda)$ has a smallest positive solution. Finally, the extremal constant sign solutions are used to produce a nodal (sign-changing) solution.

Analogous bifurcation type results describing the changes in the set of positive solutions for anisotropic Neumann problems were proved by Fan \& Deng \cite{4Fan-Deng} and Deng \& Wang \cite{1Den-Wan}. They consider problems driven by the $p(z)$-Laplacian and impose restrictive positivity and monotonicity conditions on the reaction $f(z,\cdot)$ and, in addition, they employ the $AR$-condition to express the superlinearity of the reaction. We also mention the recent isotropic work of Papageorgiou \& Zhang \cite{12Pap-Zhang} with a parametric boundary condition.
Finally, further existence and multiplicity results can be found in \cite{refe1, refe2, refe3, refe4} and the references therein.

\section{Mathematical background and hypotheses}

The analysis of problem $(P_\lambda)$ requires the use of function spaces with variable exponents. A comprehensive presentation of the theory of these spaces can be found in the book of Diening, Harjulehto, H\"asto \& Ruzi\v cka \cite{2Din-Har-Has-Ruz}. We also refer to the monograph of R\u adulescu and Repov\v s \cite{13Rad-Rep} for the basic
variational and topological methods used in the treatment of problems with variable exponent.

Let $M(\Omega)$ be the vector space of all measurable functions $u:\Omega\to \RR$. As usual, we identify two such functions which differ only on a Lebesgue-null subset of $\Omega$. Given $r\in C(\overline{\Omega})$, we define
$$
r_-=\min_{\overline{\Omega}}r \mbox{ and } r_+=\max_{\overline{\Omega}}r.
$$
Consider the set
$$
E_1=\{r\in C(\overline{\Omega}):\:1<r_-\}.
$$
Then for $r\in E_1$ we define the variable exponent Lebesgue space $L^{r(z)}(\Omega)$ as follows
$$
L^{r(z)}(\Omega)=\{u\in M(\Omega):\:\rho_r(u)<\infty\},
$$
where $\rho_r(\cdot)$ is the modular function defined by
$$
\rho_r(u)=\int_\Omega |u|^{r(z)}dz.
$$
We equip the space $L^{r(z)}(\Omega)$ with the so called ``Luxemburg norm" defined by
$$
\|u\|_{r(z)}=\inf\left\{\vartheta:\:\rho_r\left(\frac{u}{\vartheta}\right)
\leq1\right\}.
$$
Then $L^{r(z)}(\Omega)$ becomes a Banach space which is separable and reflexive (in fact, uniformly convex). For $r\in E_1$, we define the conjugate variable exponent $r'(\cdot)$ corresponding to $r(\cdot)$ by
$$
r'(z)=\frac{r(z)}{r(z)-1} \mbox{ for all }z\in\overline{\Omega}.
$$
Evidently, $r'\in E_1$ and $\frac{1}{r(z)}+\frac{1}{r'(z)}=1$ for all $z\in\overline{\Omega}$. We know that $L^{r(z)}(\Omega)^*=L^{r'(z)}(\Omega)$ and the following H\"older's inequality is true
$$
\int_\Omega |uv|dz\leq \left(\frac{1}{r_-}+\frac{1}{r'_-}\right)\|u\|_{r(z)}\|v\|_{r'(z)}
$$
for all $u\in L^{r(z)}(\Omega)$, all $v\in L^{r'(z)}(\Omega)$.

Having the variable exponent Lebesgue spaces, we can define the corresponding variable exponent Sobolev spaces. So, if $r\in E_1$, then we define
$$
W^{1,r(z)}(\Omega)=\left\{u\in L^{r(z)}(\Omega):\: |Du|\in L^{r(z)}(\Omega)\right\},
$$
with $Du$ being the weak gradient of $u(\cdot)$. We equip this space with the following norm
$$
\|u\|_{1,r(z)}=\|u\|_{r(z)}+\|Du\|_{r(z)} \mbox{ for all }u\in W^{1,r(z)}(\Omega)
$$
with $\|Du\|_{r(z)}= \| |Du| \|_{r(z)}$. It follows that $W^{1,r(z)}(\Omega)$ is a Banach space which is separable and reflexive (in fact, uniformly convex).

For $r\in E_1$ we introduce the corresponding critical Sobolev variable exponent $r^*(\cdot)$ defined by
$$
r^*(z)=\left\{
         \begin{array}{ll}
           \frac{Nr(z)}{N-r(z)}, & \hbox{ if } r(z)<N \\
           +\infty, & \hbox{ if }N\leq r(z)
         \end{array}
       \right.
\mbox{ for all } z\in\overline{\Omega}.
$$

Suppose that $r\in C^{0,1}(\overline{\Omega})\cap E_1$ and $\tau\in C(\overline{\Omega})$ with $1\leq \tau_-$. We have the following embeddings (anisotropic Sobolev embedding theorem).
\begin{prop}\label{prop1}
  \begin{itemize}
    \item[(a)] $W^{1,r(z)}(\Omega)\hookrightarrow L^{\tau(z)}(\Omega)$ continuously if $\tau(z)\leq r^*(z)$ for all $z\in\overline{\Omega}$.
    \item[(b)] $W^{1,r(z)}(\Omega)\hookrightarrow L^{\tau(z)}(\Omega)$ compactly if $\tau(z)<r^*(z)$ for all $z\in\overline{\Omega}$.
  \end{itemize}
\end{prop}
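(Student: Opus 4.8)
The plan is to reduce Proposition~\ref{prop1} to classical embedding theorems by a localization argument, treating the genuinely critical case $\tau(z)=r^*(z)$ separately. Two preliminary remarks will be used repeatedly. First, since $\Omega$ is bounded, $L^{s(z)}(\Omega)\hookrightarrow L^{t(z)}(\Omega)$ continuously whenever $t(z)\le s(z)$ for a.a.\ $z\in\Omega$; this follows from H\"older's inequality with the variable exponent $\frac{s(z)}{s(z)-t(z)}$. Second, for a finite open cover $\{B_i\}_{i=1}^m$ of $\overline\Omega$ and a subordinate smooth partition of unity $\{\phi_i\}$, each map $u\mapsto\phi_i u$ is bounded on $W^{1,r(z)}(\Omega)$ (product rule, together with $\phi_i,D\phi_i\in L^\infty$), and $u=\sum_i\phi_i u$; hence an embedding of $W^{1,r(z)}(\Omega)$ into $L^{\tau(z)}(\Omega)$ — continuous or compact — follows once each $u\mapsto\phi_i u$ is shown to map $W^{1,r(z)}(\Omega)$ into $L^{\tau(z)}(B_i\cap\Omega)$ continuously, resp.\ compactly (extending by zero).

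For part (b) I would argue purely by freezing exponents. Fix $z_0\in\overline\Omega$. Since $\tau(z_0)<r^*(z_0)$ (with the convention $r^*=+\infty$ when $r\ge N$) and both $\tau$ and $r$ are continuous, one can choose a ball $B=B(z_0,\varrho)$ so small that, writing $\tau_+^B=\max_{\overline B\cap\overline\Omega}\tau$ and $r_-^B=\min_{\overline B\cap\overline\Omega}r$, one still has $\tau_+^B<(r_-^B)^*$ (when $r(z_0)\ge N$ this uses only that $\tau_+^B\le\tau_+<\infty$ while $(r_-^B)^*$ can be made arbitrarily large). On the Lipschitz domain $B\cap\Omega$ the classical Rellich--Kondrachov theorem gives $W^{1,r_-^B}(B\cap\Omega)\hookrightarrow\hookrightarrow L^{\tau_+^B}(B\cap\Omega)$, while the two preliminary inclusions (applied to the constants $r_-^B\le r(z)$ and $\tau(z)\le\tau_+^B$ on $B\cap\Omega$) give $W^{1,r(z)}(\Omega)\hookrightarrow W^{1,r_-^B}(B\cap\Omega)$ and $L^{\tau_+^B}(B\cap\Omega)\hookrightarrow L^{\tau(z)}(B\cap\Omega)$ continuously. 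Composing, $u\mapsto\phi_i u$ is compact from $W^{1,r(z)}(\Omega)$ into $L^{\tau(z)}(B\cap\Omega)$; covering the compact set $\overline\Omega$ by finitely many such balls and summing yields (b). Taking the continuous (rather than compact) parts of the same chain proves (a) at every point where $\tau(z)<r^*(z)$.

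It remains to establish the continuous embedding near the critical set $S=\{z\in\overline\Omega:\tau(z)=r^*(z)\}$, which forces $r(z)<N$ on $S$ and on a neighborhood of it; here freezing the exponent fails, since replacing $r$ by a local minimum strictly lowers the critical exponent below $\tau$. Instead I would use that $r\in C^{0,1}(\overline\Omega)$ is log-H\"older continuous and that the bounded $C^2$-domain $\Omega$ admits an extension operator $E:W^{1,r(z)}(\Omega)\to W^{1,\bar r(z)}(\RR^N)$, where $\bar r$ is a log-H\"older continuous extension of $r$ with $\bar r_-=r_-$, $\bar r_+=r_+$ and $\bar r$ constant off a large ball. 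On $\RR^N$ one has the pointwise bound $|Eu(x)|\le c\,(I_1|DEu|)(x)$ with $I_1$ the Riesz potential of order $1$, and the boundedness of $I_1:L^{\bar r(z)}(\RR^N)\to L^{\bar r^*(z)}(\RR^N)$ — valid precisely because $\bar r$ is log-H\"older globally (the behaviour at infinity being automatic, $\bar r$ being eventually constant). Restricting back to $\Omega$ and using the first preliminary inclusion gives $W^{1,r(z)}(\Omega)\hookrightarrow L^{r^*(z)}(\Omega)\hookrightarrow L^{\tau(z)}(\Omega)$, completing (a).

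The step I expect to be the real obstacle is this critical endpoint $\tau(z)=r^*(z)$: the localization/freezing device that handles everything else breaks down there, and one genuinely needs the log-H\"older regularity of $r$ — inherited from $r\in C^{0,1}(\overline\Omega)$ — to keep the Riesz potential $I_1$ bounded from $L^{r(z)}$ into $L^{r^*(z)}$; under mere continuity of $r$ this endpoint embedding can fail. The variable-exponent extension operator and the partition-of-unity bookkeeping are then standard once this point is secured.
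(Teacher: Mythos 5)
The paper does not prove Proposition \ref{prop1} at all: it is quoted as the (known) anisotropic Sobolev embedding theorem, with the background delegated to the references \cite{2Din-Har-Has-Ruz} and \cite{13Rad-Rep}. So there is no proof in the paper to compare against; what you have written is essentially a reconstruction of the standard argument from that literature, and in outline it is correct. Your scheme --- partition of unity, the inclusion $L^{s(\cdot)}(\Omega)\hookrightarrow L^{t(\cdot)}(\Omega)$ for $t\le s$ on a bounded domain, freezing the exponents on small balls and invoking the classical Rellich--Kondrachov theorem for (b) and for the subcritical part of (a), and then handling the critical set $\{\tau=r^*\}$ via an extension operator, the pointwise Riesz potential estimate and the boundedness of $I_1:L^{\bar r(\cdot)}(\RR^N)\to L^{\bar r^*(\cdot)}(\RR^N)$ under log-H\"older continuity --- is exactly how these embeddings are established in \cite{2Din-Har-Has-Ruz}, and your diagnosis that log-H\"older regularity (here automatic from $r\in C^{0,1}(\overline\Omega)$) is the genuinely needed ingredient at the endpoint is the right one.

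Two points should be tightened in a careful write-up, though neither is a conceptual gap. First, the global chain $W^{1,r(z)}(\Omega)\hookrightarrow L^{r^*(z)}(\Omega)$ via $I_1$ needs $\bar r_+<N$; since $r^*(z)=+\infty$ wherever $r(z)\ge N$, you should apply this step only after cutting off to a neighborhood of the critical set $S$ (where, as you observe, $r<N$), extending that localized piece, and leaving the region $\{r\ge N\}$ to the freezing argument --- as written, your final chain is stated globally and silently assumes $r<N$ on all of $\overline\Omega$. Second, the pointwise bound $|Eu|\le c\,I_1(|D(Eu)|)$ holds for compactly supported functions, so you should arrange the extension (or insert a further cutoff) to have compact support before invoking it. With these routine adjustments the argument is complete.
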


If $u\in W^{1,r(z)}(\Omega)$, then we write $\rho_r(Du)=\rho_r(|Du|)$. There is a close relation between the norm $\|\cdot\|_{r(z)}$ and the modular function $\rho_r(\cdot)$.

\begin{prop}\label{prop2}
  If $r\in E_1$ and $\{u_n,u\}_{n\in\NN}\subseteq L^{r(z)}(\Omega)$, then
\begin{itemize}
  \item[(a)] $\|u\|_{r(z)}=\vartheta \Leftrightarrow \rho_r\left(\frac{u}{\vartheta}\right)=1$.
  \item[(b)] $\|u\|_{r(z)}<1$ (resp. $=1,>1$) $\Leftrightarrow$ $\rho_r(u)<1$ (resp. $=1,>1$).
  \item[(c)] $\|u\|_{r(z)}<1$ $\Rightarrow$ $\|u\|_{r(z)}^{r_+}\leq \rho_r(u)\leq \|u\|_{r(z)}^{r_-}$.
  \item[(d)] $\|u\|_{r(z)}>1$ $\Rightarrow$ $\|u\|_{r(z)}^{r_-}\leq \rho_r(u)\leq \|u\|_{r(z)}^{r_+}$.
  \item[(e)] $\|u_n\|_{r(z)}\to0$ $\Leftrightarrow$ $\rho_r(u_n)\to 0$.
  \item[(f)] $\|u_n\|_{r(z)}\to\infty$ $\Leftrightarrow$ $\rho_r(u_n)\to+\infty$.
\end{itemize}
\end{prop}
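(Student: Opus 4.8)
The plan is to make part (a) the structural core of the argument and then peel off (b)--(f) by elementary manipulations of the power function; throughout we may assume $u\neq 0$, the case $u=0$ being trivial since then $\rho_r(u)=0=\|u\|_{r(z)}$. Fix $u\in L^{r(z)}(\Omega)$ with $u\neq 0$ and introduce the auxiliary function $g:(0,\infty)\to[0,\infty]$ given by $g(\vartheta)=\rho_r(u/\vartheta)=\int_\Omega(|u|/\vartheta)^{r(z)}\,dz$. The first step is to establish three properties of $g$. (i) $g$ is continuous on $(0,\infty)$: for $\vartheta_n\to\vartheta>0$ one dominates $(|u|/\vartheta_n)^{r(z)}$, for $\vartheta_n$ eventually in $[\vartheta/2,2\vartheta]$, by $\max\{(2/\vartheta)^{r_-},(2/\vartheta)^{r_+}\}\,|u|^{r(z)}\in L^1(\Omega)$ and applies the dominated convergence theorem. (ii) $g$ is strictly decreasing: since $u\neq 0$, $|u|>0$ on a set of positive measure, and $\vartheta\mapsto(|u(z)|/\vartheta)^{r(z)}$ is strictly decreasing there. (iii) $g(\vartheta)\to 0$ as $\vartheta\to+\infty$ (dominated convergence, using $(|u|/\vartheta)^{r(z)}\le|u|^{r(z)}$ for $\vartheta\ge 1$) and $g(\vartheta)\to+\infty$ as $\vartheta\to 0^+$ (Fatou's lemma, since $(|u(z)|/\vartheta)^{r(z)}\to+\infty$ on $\{u\neq 0\}$). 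By the intermediate value theorem together with strict monotonicity there is a unique $\vartheta_0>0$ with $g(\vartheta_0)=1$, and continuity plus monotonicity identify $\vartheta_0$ with the infimum defining $\|u\|_{r(z)}$; this is precisely (a).

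For (b) through (d), write $u=\|u\|_{r(z)}\,w$ with $w=u/\|u\|_{r(z)}$, so that $\rho_r(w)=1$ by (a), and note $\rho_r(u)=\int_\Omega\|u\|_{r(z)}^{\,r(z)}\,|w|^{r(z)}\,dz$. The map $t\mapsto\int_\Omega t^{\,r(z)}|w|^{r(z)}\,dz$ is strictly increasing on $(0,\infty)$ and takes the value $1$ at $t=1$; comparing $\rho_r(u)$ with $1$ according to whether $\|u\|_{r(z)}$ is $<1$, $=1$ or $>1$ yields the three equivalences of (b) at once. Parts (c) and (d) are refinements of the same computation: when $\|u\|_{r(z)}<1$ one has $\|u\|_{r(z)}^{\,r_+}\le\|u\|_{r(z)}^{\,r(z)}\le\|u\|_{r(z)}^{\,r_-}$ pointwise, and integrating against $|w|^{r(z)}$ with $\rho_r(w)=1$ gives $\|u\|_{r(z)}^{\,r_+}\le\rho_r(u)\le\|u\|_{r(z)}^{\,r_-}$; when $\|u\|_{r(z)}>1$ the inequalities between the exponents reverse, producing (d).

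Finally, (e) and (f) follow by combining (b) with (c)--(d). If $\|u_n\|_{r(z)}\to 0$, then eventually $\|u_n\|_{r(z)}<1$ and (c) gives $\rho_r(u_n)\le\|u_n\|_{r(z)}^{\,r_-}\to 0$; conversely, if $\rho_r(u_n)\to 0$ then eventually $\rho_r(u_n)<1$, hence $\|u_n\|_{r(z)}<1$ by (b), and then $\|u_n\|_{r(z)}^{\,r_+}\le\rho_r(u_n)\to 0$ forces $\|u_n\|_{r(z)}\to 0$, which is (e); part (f) is obtained identically in the regime $\|u_n\|_{r(z)}>1$ using (d). The only genuinely delicate point in the whole argument is step (a): one must verify that the Luxemburg infimum is actually attained and coincides with the unique solution of $g(\vartheta)=1$, and this is exactly where the continuity and strict monotonicity of $g$ are needed — hence where the integrability of a dominating function, guaranteed by $u\in L^{r(z)}(\Omega)$, enters. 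Everything after (a) is bookkeeping with the inequality $t^{r_+}\le t^{r(z)}\le t^{r_-}$ (for $0<t\le 1$) and its reverse (for $t\ge 1$).
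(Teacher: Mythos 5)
Your proof is correct. Note, however, that the paper does not actually prove Proposition~\ref{prop2}: it is stated as a standard background fact about the Luxemburg norm and its modular, and the reader is referred to the monographs of Diening, Harjulehto, H\"asto \& Ruzi\v cka \cite{2Din-Har-Has-Ruz} and R\u adulescu \& Repov\v s \cite{13Rad-Rep}. Your argument is essentially the one given in those references, built around the auxiliary map $g(\vartheta)=\rho_r(u/\vartheta)$: you correctly establish that $g$ is finite, continuous (by dominated convergence, using $r_+<\infty$ so that the dominating constant $\max\{(2/\vartheta)^{r_-},(2/\vartheta)^{r_+}\}$ makes sense), strictly decreasing for $u\neq 0$, with $g(0^+)=+\infty$ and $g(+\infty)=0$; this makes the Luxemburg infimum attained and equal to the unique root of $g=1$, which is (a). The reduction of (b)--(d) through the normalization $w=u/\|u\|_{r(z)}$, $\rho_r(w)=1$, and the pointwise comparison $t^{r_+}\le t^{r(z)}\le t^{r_-}$ for $t\in(0,1]$ (reversed for $t\ge 1$) is clean, and deducing (e)--(f) from (b)--(d) in the regimes $\|u_n\|_{r(z)}<1$ and $>1$ respectively is exactly right; the only small point worth making explicit is that $0<r_-\le r_+<\infty$ (which holds since $r\in C(\overline\Omega)$ and $\overline\Omega$ is compact) is what makes the power $t\mapsto t^{r_\pm}$ a homeomorphism of $[0,\infty)$, so that $\|u_n\|_{r(z)}^{r_+}\to 0$ forces $\|u_n\|_{r(z)}\to 0$, etc. In short: the proposal is a complete and correct proof of the standard result the paper quotes without proof.
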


Let $A_r: W^{1,r(z)}(\Omega)\to W^{1,r(z)}(\Omega)^*$ be the nonlinear operator defined by
$$
\langle A_r(u),h\rangle=\int_\Omega |Du|^{r(z)-2}(Du,Dh)_{\RR^N}dz
$$
for all $u,h\in W^{1,r(z)}(\Omega)$. This operator has the following properties, see Gasinski \& Papageorgiou \cite{5Gas-Pap} and R\u adulescu \& Repov\v s \cite[p. 40]{13Rad-Rep}.

\begin{prop}\label{prop3}
  The operator $A_r:W^{1,r(z)}(\Omega)\to W^{1,r(z)}(\Omega)^*$ is bounded (that is, maps bounded sets to bounded sets), continuous, monotone (thus, maximal monotone, too) and of type $(S)_+$, that is,
$$
`` u_n \overset{w}{\to}u \mbox{ in } W^{1,r(z)}(\Omega) \mbox{ and } \limsup_{n\to\infty}\langle A_r(u_n),u_n-u\rangle \leq 0
$$
$$
\mbox{ imply that }
$$
$$
u_n\to u \mbox{ in } W^{1,r(z)}(\Omega)".
$$
\end{prop}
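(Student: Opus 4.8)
The plan is to check the four listed properties in turn; boundedness, continuity and monotonicity are routine, while the \emph{$(S)_+$} property requires the real work. \emph{Boundedness} follows from the variable-exponent H\"older inequality: if $u$ ranges over a bounded subset of $W^{1,r(z)}(\Omega)$, Proposition~\ref{prop2}(c)--(d) bounds $\rho_r(Du)$, hence $\rho_{r'}(|Du|^{r(z)-1})$, hence $\| |Du|^{r(z)-1} \|_{r'(z)}$, and then $|\langle A_r(u),h\rangle|\le c\, \| |Du|^{r(z)-1} \|_{r'(z)}\,\|Dh\|_{r(z)}$ gives a uniform bound on $\|A_r(u)\|_*$. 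For \emph{continuity}, if $u_n\to u$ in $W^{1,r(z)}(\Omega)$ then $Du_n\to Du$ in $L^{r(z)}(\Omega;\RR^N)$; passing to an arbitrary subsequence I may assume $Du_n\to Du$ a.e.\ with $|Du_n|\le g$ a.e.\ for some $g\in L^{r(z)}(\Omega)$, so that $|Du_n|^{r(z)-2}Du_n\to|Du|^{r(z)-2}Du$ a.e.\ with $L^{r'(z)}$-dominant $g^{r(z)-1}$; a dominated-convergence argument in $L^{r'(z)}(\Omega;\RR^N)$ and H\"older's inequality then give $A_r(u_n)\to A_r(u)$ in $W^{1,r(z)}(\Omega)^*$, and since the subsequence was arbitrary the whole sequence converges.

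\emph{Monotonicity} is obtained by integrating the elementary vector inequality
$$\bigl(|\xi|^{r-2}\xi-|\eta|^{r-2}\eta,\,\xi-\eta\bigr)_{\RR^N}\ge 0 \qquad (\xi,\eta\in\RR^N,\ r>1),$$
which is strict whenever $\xi\ne\eta$; being everywhere defined, monotone and continuous, $A_r$ is then maximal monotone by the standard criterion.

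For the \emph{$(S)_+$ property}, suppose $u_n\overset{w}{\to}u$ in $W^{1,r(z)}(\Omega)$ and $\limsup_n\langle A_r(u_n),u_n-u\rangle\le0$. Since $u_n-u\overset{w}{\to}0$ yields $\langle A_r(u),u_n-u\rangle\to0$, monotonicity forces $\langle A_r(u_n)-A_r(u),u_n-u\rangle\to0$, i.e.
$$\int_\Omega\bigl(|Du_n|^{r(z)-2}Du_n-|Du|^{r(z)-2}Du,\,Du_n-Du\bigr)_{\RR^N}\,dz\to0,$$
with nonnegative integrand; hence that integrand tends to $0$ in $L^1(\Omega)$, along a subsequence a.e., which by strict monotonicity forces $Du_n\to Du$ a.e.\ in $\Omega$. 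To turn this into $\|Du_n-Du\|_{r(z)}\to0$ I would split $\Omega$ into $\{z:r(z)\ge2\}$ and $\{z:r(z)<2\}$ and, on each part, dominate $|Du_n-Du|^{r(z)}$ by the monotonicity gap above --- directly on the first, and via a H\"older interpolation against the uniformly bounded $\rho_r(|Du_n|+|Du|)$ on the second --- to conclude $\rho_r(Du_n-Du)\to0$, hence $\|Du_n-Du\|_{r(z)}\to0$ by Proposition~\ref{prop2}(e); together with $\|u_n-u\|_{r(z)}\to0$, coming from the compact embedding $W^{1,r(z)}(\Omega)\hookrightarrow L^{r(z)}(\Omega)$ of Proposition~\ref{prop1}(b) (valid since $r(z)<r^*(z)$ on $\overline{\Omega}$), this gives $u_n\to u$ in $W^{1,r(z)}(\Omega)$. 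I expect this last passage to be the main obstacle, since the two pointwise monotonicity inequalities have different homogeneities and the bookkeeping across $\{r\ge2\}$ and $\{r<2\}$ must be handled with care; an alternative is to route through the Minty device to get $\rho_r(Du_n)\to\rho_r(Du)$ and then invoke the uniform convexity of $L^{r(z)}(\Omega)$.
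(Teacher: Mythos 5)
Your argument is essentially correct, but note that the paper itself offers no proof of Proposition \ref{prop3}: it is quoted as a known result with references to Gasinski \& Papageorgiou \cite{5Gas-Pap} and R\u adulescu \& Repov\v s \cite[p. 40]{13Rad-Rep}, so there is no internal proof to compare against. What you wrote is in substance the standard argument found in those sources: boundedness via the modular/norm relations of Proposition \ref{prop2} and the variable-exponent H\"older inequality, continuity via a.e.\ convergent subsequences with an $L^{r(z)}$ majorant and dominated convergence in modular form, monotonicity (indeed strict) from the pointwise vector inequality, maximal monotonicity from the everywhere-defined monotone hemicontinuous criterion, and the $(S)_+$ property from the vanishing of the monotonicity gap combined with the two pointwise estimates on $\{r(z)\ge 2\}$ and $\{1<r(z)<2\}$ (or, equivalently, your alternative through $\rho_r(Du_n)\to\rho_r(Du)$ and uniform convexity). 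Two small points deserve explicit care if you write this out in full: the constants in the elementary inequalities depend on $r(z)$, so you must use $1<r_-\le r(z)\le r_+<\infty$ to make them uniform before integrating; and the conclusion $u_n\to u$ in $W^{1,r(z)}(\Omega)$ also requires $\|u_n-u\|_{r(z)}\to 0$, which you obtain from the compact embedding of Proposition \ref{prop1}(b) and therefore uses the Lipschitz regularity of the exponent (harmless here, since in the paper $r\in\{p,q\}\subseteq C^{0,1}(\overline{\Omega})$, but it should be flagged as an assumption rather than a consequence of $r\in E_1$ alone).
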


We now recall the Weierstrass-Tonelli theorem, which we will use in the sequel. For the convenience of the reader we include also the proof.

\begin{thm}\label{weto}
If $X$ is a reflexive Banach space and $\phi:X\rightarrow \RR$ is sequentially weakly lower semicontinuous and coercive, then there exists $\hat u\in X$ such that
$$\phi(\hat u)=\inf\{\phi (u):\ u\in X\}.$$
\end{thm}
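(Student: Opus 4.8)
The plan is to apply the direct method of the calculus of variations. First I would set
$$m=\inf\{\phi(u):u\in X\},$$
which a priori belongs to $[-\infty,+\infty)$, and fix a minimizing sequence $\{u_n\}_{n\in\NN}\subseteq X$, that is, $\phi(u_n)\to m$. The first substantive step is to check that $\{u_n\}_{n\in\NN}$ is bounded in $X$: if it were not, then passing to a subsequence we would have $\|u_n\|\to\infty$, and the coercivity of $\phi$ would force $\phi(u_n)\to+\infty$, contradicting $\phi(u_n)\to m<+\infty$.

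Next, since $X$ is reflexive and the sequence $\{u_n\}_{n\in\NN}$ is bounded, by the Eberlein--\v Smulian theorem (equivalently, Kakutani's characterization of reflexive spaces) we can extract a subsequence $\{u_{n_k}\}_{k\in\NN}$ and find $\hat u\in X$ with $u_{n_k}\overset{w}{\to}\hat u$ in $X$. Then the sequential weak lower semicontinuity of $\phi$ yields
$$\phi(\hat u)\le\liminf_{k\to\infty}\phi(u_{n_k})=m.$$
Since $\phi(\hat u)\ge m$ by the definition of the infimum, we conclude $\phi(\hat u)=m$; in particular $m>-\infty$ and the infimum is attained at $\hat u$.

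I do not expect a genuine obstacle here, as the argument is entirely classical. The one point deserving care is the extraction of a weakly convergent subsequence from a bounded sequence, which is exactly where reflexivity of $X$ enters (via weak sequential compactness of the closed unit ball). It is also worth remarking at the outset that $m$ could in principle equal $-\infty$; the computation above shows a posteriori that this does not happen, so the minimization problem is well posed.
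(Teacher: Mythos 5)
Your proof is correct and follows essentially the same route as the paper: the classical direct method, using coercivity to reduce to a bounded set, reflexivity (Eberlein--\v Smulian) to extract a weakly convergent subsequence, and sequential weak lower semicontinuity to pass to the limit. The only cosmetic difference is that the paper restricts the infimum to a closed ball $\overline{B}_R$ first, whereas you work directly with a minimizing sequence and verify its boundedness; the underlying argument is the same.
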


\begin{proof}
The coercivity of $\phi$ implies that for $R>0$ big we have
$$\inf_X\varphi =\inf_{\overline{B}_R}\phi,$$
with $\overline{B}_R=\{u\in X:\ \|u\|_X\leq R\}$.

On account of the reflexivity of $X$ and the Eberlein-Smulian theorem, $\overline{B}_R$ is sequentially weakly compact. Since $\varphi(\cdot)$ is sequentially weakly lower semicontinuous, we conclude that there exists $\hat u\in X$ such that
$$\phi(\hat u)=\inf_X\phi.$$
The proof is now complete.
\end{proof}

On account of the anisotropic regularity theory (see \cite[Theorem 1.3]{3Fan} and \cite[Corollary 3.1]{15Tan-Fang}), we will also use the Banach space $C^1(\overline{\Omega})$. This is an ordered Banach space with positive cone $$C_+=\{u\in C^1(\overline{\Omega}):u(z)\geq0 \mbox{ for all }z\in\overline{\Omega}\}.$$ This cone has a nonempty interior given by
$$
{\rm int}\,C_+=\{u\in C_+:\: u(z)>0 \mbox{ for all }z\in\overline{\Omega}\}.
$$
We will also use another open cone in $C^1(\overline{\Omega})$, which is defined by
$$
D_+=\left\{u\in C^1(\overline{\Omega}):\: u(z)>0 \mbox{ for all }z\in \Omega,\ \frac{\partial u}{\partial n}\Big|_{\partial\Omega\cap u^{-1}(0)}<0\right\}.
$$
Recall that $\displaystyle\frac{\partial u}{\partial n}=(Du,n)_{\RR^N}$, with $n(\cdot)$ being the outward unit normal on $\partial\Omega$.
If $h_1, h_2\in M(\Omega)$ with $h_1\leq h_2$, then we define
\begin{eqnarray*}
% \nonumber % Remove numbering (before each equation)
 &&[h_1,h_2]=\{u\in W^{1,r(z)}(\Omega):\: h_1(z)\leq u(z)\leq h_2(z)\mbox{ for a.a. }z\in\Omega\},\\
&&[h_1)=\{u\in W^{1,r(z)}(\Omega): \: h_1(z)\leq u(z) \mbox{ for a.a. }z\in\Omega\},\\
&&{\rm int}_{C^1(\Omega)}[h_1,h_2]=\mbox{ the interior in $C^1(\overline{\Omega})$ of } [h_1,h_2]\cap C^1(\overline{\Omega}).
\end{eqnarray*}
Suppose that $X$ is a Banach space and $\varphi\in C^1(X)$. We introduce the following sets
\begin{eqnarray*}
% \nonumber % Remove numbering (before each equation)
   && K_\varphi=\left\{u\in X:\: \varphi'(u)=0\right\} \mbox{ (the critical set of $\varphi$), } \\
  && \varphi^c=\{x\in X:\:\varphi(u)\leq c\} \mbox{ for all }c\in\RR.
\end{eqnarray*}

We say that $\varphi(\cdot)$ satisfies the ``$C$-condition", if it has the following property:
``Every sequence $\{u_n\}_{n\in\NN}\subseteq X$ such that $\{\varphi(u_n)\}_{n\in\NN}\subseteq\RR$ is bounded and $(1+\|u_n\|)\varphi'(u_n)\to 0$ in $X^*$ admits a strongly convergent subsequence".

This is a compactness condition of the functional $\varphi(\cdot)$ which compensates for the fact that the ambient space $X$ is not in general locally compact (being infinite dimensional). Various techniques have been proposed in the literature in order to recover the compactness in several circumstances. We refer to Tang and Cheng \cite{tangcheng} who proposed a new approach to restore the compactness of Palais-Smale sequences and to
Tang and Chen \cite{tangchen} who introduced an original method to recover the compactness of minimizing sequences.

If $Y_2\subseteq Y_1\subseteq X$, then by $H_k(Y_1,Y_2)$ (for $k\in\NN_0$), we denote the $k^{th}$ relative singular homology group with integer coefficients. If $u\in K_\varphi$ is isolated, then the $k^{th}$ critical group of $\varphi$ at $u$ is defined by
$$
C_k(\varphi,u)=H_k(\varphi^c\cap U, (\varphi^c\cap U)\setminus\{u\}) \mbox{ for all }k\in\NN_0,
$$
with $c=\varphi(u)$ and $U$ is a neighbourhood of $u$ such that $\varphi^c\cap K_\varphi\cap U=\{u\}$. The excision property of singular homology implies that this notion is well-defined, that is, independent of the choice of the isolating neighborhood $U$ (see \cite{9Pap-Rad-Rep}).

If $u\in M(\Omega)$, we set
$$
u^+(z)=\max\{u(z),0\},\ u^-(z)=\max\{-u(z),0\}, \ \mbox{for all}\ z\in \Omega.
$$
Then if $u\in W^{1,r(z)}(\Omega)$, we know that
$$
u^\pm\in W^{1,r(z)}(\Omega),\ u=u^+-u^-,\ |u|=u^+-u^-.
$$
Given a Carath\' eodory function $g:\Omega\times\RR\to\RR$, we denote by $N_g(\cdot)$  the corresponding Nemytski operator, that is,
$$
N_g(u)(\cdot)=g(\cdot,u(\cdot)) \mbox{ for all }u\in M(\Omega).
$$
Since a Carath\'eodory function is jointly measurable, $N_g(u)\in M(\Omega)$.

By $|\cdot|_N$ we denote the Lebesgue measure on $\RR^N$ and by $\|\cdot\|$ we will denote the norm of the Sobolev space $W^{1,p(z)}(\Omega)$.

Now we will introduce our hypotheses on the data of problem $(P_\lambda)$.

\smallskip
$H_0:$ $p,q\in C^{0,1}(\overline{\Omega})$ and $1<q(z)<p(z)$ for all $z\in\overline{\Omega}$.

\smallskip
$H_1:$ $f:\Omega\times\RR\to\RR$ is a Carath\' eodory function such that $f(z,0)=0$ for a.a. $z\in\Omega$ and
\begin{itemize}
  \item[(i)] $|f(z,x)|\leq a(z)\left[1+x^{r(z)-1}\right]$ for a.a. $z\in\Omega$, all $x\geq0$, with $\hat{a}\in L^\infty(\Omega)$, $r\in C(\overline{\Omega})$, $p_+<r(z)<p^*(z)$ for all $z\in\overline{\Omega}$;
  \item[(ii)] if $F(z,x)=\displaystyle{\int_0^x f(z,s)ds}$ then $\displaystyle{\lim_{x\to+\infty}\frac{F(z,x)}{x^{p_+}}}=+\infty$ uniformly for a.a. $z\in\Omega$;
  \item[(iii)] if $e(z,x)=f(z,x)x-p_+F(z,x)$, then there exists $\mu\in L^1(\Omega)$ such that
$$
e(z,x)\leq e(z,y)+\mu(z) \mbox{ for a.a. } z\in\Omega, \mbox{ all }0\leq x\leq y;
$$
  \item[(iv)] there exist $\tau\in C(\overline{\Omega})$ and $C_0,\delta_0,\hat{C}>0$ such that
\begin{eqnarray*}
% \nonumber % Remove numbering (before each equation)
   && 1<\tau_+<q_-, \\
   && C_0 x^{\tau(z)-1}\leq f(z,x) \mbox{ for a.a. }z\in\Omega, \mbox{ all }0\leq x\leq \delta_0, \\
   && -\hat{C}x^{p(z)-1}\leq f(z,x) \mbox{ for a.a. }z\in\Omega, \mbox{ all }x\geq0;
\end{eqnarray*}
  \item[(v)] for every $\rho>0$, there exists $\hat{\xi}_\rho>0$ such that for a.a. $z\in\Omega$ the function $x\mapsto f(z,x)+\hat{\xi}_\rho x^{p(z)-1}$ is nondecreasing on $[0,\rho]$.
\end{itemize}

\begin{rem}
  Since we look for positive solutions and all the above hypotheses concern the positive semiaxis $\RR_+=[0,\infty)$, without any loss of generality we assume that
\begin{equation}\label{eq1}
f(z,x)=0 \mbox{ for a.a. }z\in\Omega, \mbox{ all }x\leq 0.
\end{equation}
\end{rem}
Hypotheses $H_1(ii),(iii)$ imply that for a.a. $z\in\Omega$ the mapping $f(z,\cdot)$ is $(p_+-r)$-superlinear as $x\to+\infty$. However, this superlinearity condition is not expressed using the $AR$-condition. We recall that the $AR$-condition (unilateral version due to \eqref{eq1}) says that there exist $\vartheta>p_+$ and $M>0$ such that
\begin{eqnarray*}
% \nonumber % Remove numbering (before each equation)
   && 0<\vartheta F(z,x)\leq f(z,x)x \mbox{ for a.a. }z\in\Omega, \mbox{ all } x\geq M,\\
  && 0<\underset{\Omega}{\rm essinf}\, F(\cdot,M).
\end{eqnarray*}
These conditions imply that there exists $\tilde{C}>0$ such that
$$
\tilde{C} x^{\vartheta-1}\leq f(z,x) \mbox{ for a.a. }z\in \Omega, \mbox{ all }x\geq M.
$$
So, the $AR$-condition dictates at least $(\vartheta-1)$-polynomial growth for $f(z,\cdot)$. Hypotheses $H_1(ii),(iii)$ are less restrictive and incorporate in our framework superlinear nonlinearities with ``slower" growth as $x\to+\infty$ (see the example below). Also we emphasize that in contrast to the previous works \cite{1Den-Wan}, \cite{4Fan-Deng}, we do not assume that $f\geq0$ neither that $f(z,\cdot)$ is nondecreasing. These are hypotheses employed by Fan \& Deng \cite{4Fan-Deng} and Deng \& Wang \cite{1Den-Wan}. Moreover, in the aforementioned works the authors assume the $AR$-condition for $f(z,\cdot)$. Finally, note that also in contrast to the previous works, we do not assume that the parameter $\lambda$ is strictly positive. Here, $\lambda\in \RR$ and so the differential operator (left-hand side) of problem $(P_\lambda)$ is not in general coercive.
\begin{example}\label{exp}
  Consider the function
$$
f(z,x)=\left\{
         \begin{array}{ll}
           \vartheta(x^+)^{\tau(z)-1}-\hat{C}_0(x^+)^{\gamma(z)-1}, & \hbox{ if }x\leq 1 \\
           x^{p_+-1}\ln x+(\vartheta-\hat{C}_0)x^{\eta(z)-1}, & \hbox{ if } 1<x
         \end{array}
       \right.
$$
\end{example}
\noindent with $\tau,\gamma,\eta\in C(\overline{\Omega})$, $\tau_+<q_-$, $1<\tau(z)<\gamma(z)$, $1<\eta(z)<p(z)$ for all $z\in\overline{\Omega}$ with $\hat{C}_0>\vartheta>0$. Then this function satisfies hypotheses $H_1$ above but does not satisfy the hypotheses of \cite{1Den-Wan}, \cite{4Fan-Deng} (the $AR$-condition fails and $f(z,\cdot)$ is not monotone on $\RR_+=[0,\infty)$).

We introduce the following sets
\begin{eqnarray*}
% \nonumber % Remove numbering (before each equation)
  && \mathcal{L}^+=\left\{\lambda\in\RR: \mbox{ problem $(P_\lambda)$ has a positive solution}\right\}, \\
  && S_\lambda^+=\mbox{ the set of positive solutions of $(P_\lambda)$.}
\end{eqnarray*}
\section{Positive solutions}

We start by showing that $\mathcal{L}^+$ is nonempty. To this end, let $\eta>0$ and consider the following auxiliary anisotropic Neumann problem
\begin{equation}\label{eq2}
  \left\{%an
\begin{array}{lll}
-\Delta_{p(z)} u-\Delta_{q(z)} u+ u^{p(z)-1}= \eta \text{ in } \Omega,\\
\displaystyle\frac{\partial u}{\partial n}=0 \mbox{ on }\partial\Omega,\ u\geq 0.
\end{array}
\right.
\end{equation}

\begin{prop}\label{prop4}
  If hypotheses $H_0$ hold, then problem \eqref{eq2} has a unique positive solution
$$
\overline{u}_\eta\in{\rm int}\,C_+
$$
and  $\overline{u}_\eta\to 0$ in $C^1(\overline{\Omega})$ as $\eta\to 0^+$.
\end{prop}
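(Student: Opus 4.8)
The plan is to realise $\overline{u}_\eta$ as the minimiser of a coercive energy and then bootstrap its regularity. First I would work in the Banach space $X=W^{1,p(z)}(\Omega)$, which by $H_0$ embeds continuously in $W^{1,q(z)}(\Omega)$, and introduce
\[
\psi_\eta(u)=\int_\Omega\frac{1}{p(z)}|Du|^{p(z)}\,dz+\int_\Omega\frac{1}{q(z)}|Du|^{q(z)}\,dz+\int_\Omega\frac{1}{p(z)}|u|^{p(z)}\,dz-\eta\int_\Omega u\,dz .
\]
This functional is $C^1$, convex and continuous, hence sequentially weakly lower semicontinuous, and by Proposition \ref{prop2} (using $1<p_-$) the $p(z)$-modular terms dominate the linear term, so $\psi_\eta$ is coercive. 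Theorem \ref{weto} then yields a global minimiser $\overline{u}_\eta\in X$, which solves $\psi_\eta'(\overline{u}_\eta)=0$, i.e.
\[
\langle A_p(\overline{u}_\eta),h\rangle+\langle A_q(\overline{u}_\eta),h\rangle+\int_\Omega|\overline{u}_\eta|^{p(z)-2}\overline{u}_\eta h\,dz=\eta\int_\Omega h\,dz\qquad\text{for all }h\in X .
\]
Testing with $h=-\overline{u}_\eta^-$ forces $\rho_p(D\overline{u}_\eta^-)+\rho_q(D\overline{u}_\eta^-)+\rho_p(\overline{u}_\eta^-)\le 0$, hence $\overline{u}_\eta\ge 0$; and $\psi_\eta(\varepsilon)<0=\psi_\eta(0)$ for small constants $\varepsilon>0$ (since $\varepsilon^{p_-}=o(\varepsilon)$), so $\overline{u}_\eta\neq 0$. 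Thus $\overline{u}_\eta$ is a nontrivial nonnegative weak solution of \eqref{eq2}.

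Next I would upgrade regularity and positivity. By the anisotropic regularity theory (\cite[Theorem 1.3]{3Fan}, \cite[Corollary 3.1]{15Tan-Fang}) the solution lies in $L^\infty(\Omega)$ and then in $C^{1,\alpha}(\overline{\Omega})$ for some $\alpha\in(0,1)$, so $\overline{u}_\eta\in C_+\setminus\{0\}$. Rewriting the equation as $\Delta_{p(z)}\overline{u}_\eta+\Delta_{q(z)}\overline{u}_\eta\le\overline{u}_\eta^{\,p(z)-1}$ and applying the anisotropic strong maximum principle together with the Hopf boundary point lemma (a boundary zero of $\overline{u}_\eta$ would give $\partial\overline{u}_\eta/\partial n<0$ there, contradicting the Neumann condition) gives $\overline{u}_\eta(z)>0$ for all $z\in\overline{\Omega}$, i.e. $\overline{u}_\eta\in{\rm int}\,C_+$. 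For \emph{uniqueness}, if $v\in X$ is any solution, subtracting the two weak formulations and testing with $\overline{u}_\eta-v$ gives
\[
\langle A_p(\overline{u}_\eta)-A_p(v),\overline{u}_\eta-v\rangle+\langle A_q(\overline{u}_\eta)-A_q(v),\overline{u}_\eta-v\rangle+\int_\Omega\big(|\overline{u}_\eta|^{p(z)-2}\overline{u}_\eta-|v|^{p(z)-2}v\big)(\overline{u}_\eta-v)\,dz=0 .
\]
By the monotonicity in Proposition \ref{prop3} the first two brackets are nonnegative, while the last integrand is pointwise nonnegative and strictly positive wherever $\overline{u}_\eta\neq v$; hence $\overline{u}_\eta=v$.

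Finally, for the limit $\eta\to 0^+$: testing the equation with $\overline{u}_\eta$ and using $X\hookrightarrow L^1(\Omega)$ (Proposition \ref{prop1}) yields
\[
\rho_p(D\overline{u}_\eta)+\rho_q(D\overline{u}_\eta)+\rho_p(\overline{u}_\eta)=\eta\int_\Omega\overline{u}_\eta\,dz\le c\,\eta\,\|\overline{u}_\eta\| ,
\]
and Proposition \ref{prop2} converts this into $\|\overline{u}_\eta\|\to 0$ in $X$ as $\eta\to 0^+$ (in particular $\|\overline{u}_\eta\|$ stays bounded for $\eta$ near $0$). Inserting this bound into the anisotropic regularity estimates produces $\alpha\in(0,1)$ and $M>0$, uniform for $\eta$ near $0$, with $\|\overline{u}_\eta\|_{C^{1,\alpha}(\overline{\Omega})}\le M$; since $C^{1,\alpha}(\overline{\Omega})\hookrightarrow C^1(\overline{\Omega})$ compactly, $\{\overline{u}_\eta\}_{\eta}$ is relatively compact in $C^1(\overline{\Omega})$ and its only possible limit point (which must also be its $W^{1,p(z)}$-limit) is $0$, so $\overline{u}_\eta\to 0$ in $C^1(\overline{\Omega})$. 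I expect the existence, sign and uniqueness steps to be essentially routine, given the coercive and strictly monotone structure of the operator; the main technical obstacle should be the uniform-in-$\eta$ $L^\infty$ and $C^{1,\alpha}$ bounds and the careful invocation of the anisotropic regularity and maximum-principle results from \cite{3Fan,15Tan-Fang}.
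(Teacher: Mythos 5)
Your proposal is correct, and it reaches the same weak formulation, the same sign/regularity/maximum-principle steps, and the same compactness argument in $C^{1,\alpha}(\overline{\Omega})\hookrightarrow C^1(\overline{\Omega})$ as the paper, but the two key steps are obtained by a somewhat different mechanism. For existence, the paper does not minimize an energy: it introduces $V=A_p+A_q+K_p$, notes that $V$ is maximal monotone, strictly monotone and coercive, and invokes a surjectivity theorem for such operators; uniqueness is then immediate from strict monotonicity, whereas you minimize the convex functional $\psi_\eta$ via Theorem \ref{weto} and prove uniqueness by the (equivalent) monotonicity computation on the difference of two solutions. Both routes are legitimate and of comparable length; the operator-theoretic one gives uniqueness with no extra work, while yours stays entirely within the variational toolkit already set up in the paper (and your energy argument for $\overline{u}_\eta\neq 0$ is fine, though the paper gets it for free from the equation, since $\overline{u}_\eta=0$ would force $\eta=0$). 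For the limit $\eta\to 0^+$, the paper extracts a $C^1$-convergent subsequence from the uniform $C^{1,\alpha}$ bound and identifies the limit by passing to the limit in the equation, obtaining $V(\overline{u})=0$ and hence $\overline{u}=0$; you instead identify the limit from the a priori estimate $\rho_p(D\overline{u}_\eta)+\rho_q(D\overline{u}_\eta)+\rho_p(\overline{u}_\eta)\leq c\,\eta\,\|\overline{u}_\eta\|$, which via Proposition \ref{prop2} yields $\|\overline{u}_\eta\|\to 0$ in $W^{1,p(z)}(\Omega)$. Your variant is slightly more quantitative and avoids passing to the limit in the operators; just make the subsequence bookkeeping explicit (every subsequence has a further subsequence converging in $C^1(\overline{\Omega})$, and the limit must be $0$ by the norm estimate), and note, as you do, that the uniform $C^{1,\alpha}$ bound rests on a uniform $L^\infty$ bound (Winkert--Zacher), available because $\eta\in(0,1]$ and the Sobolev norms are uniformly bounded — the same level of appeal to the regularity literature as in the paper.
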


\begin{proof}
Let $K_p:L^{p(z)}(\Omega)\to L^{p'(z)}(\Omega)$ be the nonlinear operator defined by
$$
K_p(u)(\cdot)=|u(\cdot)|^{p(\cdot)-2}u(\cdot) \mbox{ for all }u\in L^{p(z)}(\Omega).
$$
This operator is continuous and strictly monotone, too (see \cite[p.117]{9Pap-Rad-Rep}). Then we introduce $V:W^{1,p(z)}(\Omega)\to W^{1,p(z)}(\Omega)^*$ defined by
$$
V(u)=A_p(u)+A_q(u)+K_p(u) \mbox{ for all } u\in W^{1,p(z)}(\Omega).
$$
Then $V(\cdot)$ is maximal monotone (see \cite[p.135]{9Pap-Rad-Rep}), strictly monotone and
\begin{eqnarray*}
% \nonumber % Remove numbering (before each equation)
  && \langle V(u),u\rangle\geq \rho_p(Du)+\rho_p(u) \mbox{ for all }u\in W^{1,p(z)}(\Omega),\\
  &\Rightarrow& V(\cdot) \mbox{ is coercive } \mbox{ (see Proposition \ref{prop2}).}
\end{eqnarray*}
Then Corollary 2.8.7 of \cite[p.135]{9Pap-Rad-Rep} implies that $V(\cdot)$ is surjective. So, we can find $\overline{u}_\eta\in W^{1,p(z)}(\Omega)$ such that
$$
V(\overline{u}_\eta)=\eta.
$$

On account of the strict monotonicity of $V(\cdot)$, this solution is unique. Taking duality brackets with $-\overline{u}^-_\eta\in W^{1,p(z)}(\Omega)$, we obtain
\begin{eqnarray*}
% \nonumber % Remove numbering (before each equation)
  && \rho_p(D\overline{u}_\eta^-)+\rho_p(\overline{u}_\eta^-)\leq \int_\Omega \eta(-\overline{u}_\eta^-)dz\leq0, \\
  &\Rightarrow& \overline{u}_\eta\geq 0,\ \overline{u}_\eta\not=0 \mbox{ (see Proposition \ref{prop2} and recall that $\eta>0$). }
\end{eqnarray*}
We have
\begin{equation}\label{eq3}
  -\Delta_{p(z)} \overline{u}_\eta -\Delta_{q(z)} \overline{u}_\eta+ \overline{u}_\eta^{p(z)-1}=\eta \mbox{ in }\Omega,\;\frac{\partial \overline{u}_\eta}{\partial n}=0 \mbox{ on }\partial\Omega.
\end{equation}

From Winkert \& Zacher \cite{16Win-Zach} (see also Papageorgiou, R\u adulescu \& Zhang \cite[Proposition A1]{11Pap-Rad-Zhang}), we have $\overline{u}_\eta\in L^\infty(\Omega)$. Then the anisotropic regularity theory (see Fan \cite{3Fan} and Tan \& Fang \cite{15Tan-Fang}), we have $\overline{u}_\eta\in C_+\setminus\{0\}$. From \eqref{eq3} we have
\begin{eqnarray*}
% \nonumber % Remove numbering (before each equation)
   && \Delta_{p(z)} \overline{u}_\eta + \Delta_{q(z)} \overline{u}_\eta\leq \overline{u}_\eta^{p(z)-1} \mbox{ in }\Omega, \\
  &\Rightarrow& \overline{u}_\eta\in {\rm int}\,C_+\\
    && \mbox{ (see Papageorgiou, Qin \& R\u adulescu \cite[Proposition 4]{7Pap-Qin-Rad}). }
\end{eqnarray*}

Now let $\eta_n\to 0^+$ and let $\overline{u}_n=\overline{u}_{\eta_n}\in {\rm int}\,C_+$ for all $n\in \NN$. The anisotropic regularity theory (see \cite{3Fan}, \cite{15Tan-Fang}) implies that there exist $\alpha\in (0,1)$ and $C_1>0$ such that
\begin{equation}\label{eq4}
  u_n\in C^{1,\alpha}(\overline{\Omega}),\ \|u_n\|_{C^{1,\alpha}(\overline{\Omega})}\leq C_1 \mbox{ for all }n\in \NN.
\end{equation}
The compact embedding of $C^{1,\alpha}(\overline{\Omega})$ into $C^1(\overline{\Omega})$, implies that at least for a subsequence we have
\begin{equation}\label{eq5}
  \overline{u}_n\to \overline{u} \mbox{ in } C^1(\overline{\Omega}) \mbox{ as } n\to\infty.
\end{equation}
Recall that
$$
A_p(\overline{u}_n)+ A_q(\overline{u}_n)+K_p(\overline{u}_n)=\eta_n \mbox{ in $W^{1,p(z)}(\Omega$ for all } n\in\NN,
$$
that is,
$$\langle A_p(\overline{u}_n),h\rangle + \langle A_q(\overline{u}_n),h\rangle +\int_\Omega |\overline{u}_n|^{p(z)-2}\overline{u}_nhdz=\int_\Omega \eta_nhdz \mbox{ for all $h\in W^{1,p(z)}(\Omega)$.}
$$
We pass to the limit as $n\to\infty$. Because of \eqref{eq5} and Proposition \ref{prop3} we have
$$A_p(\overline{u}_n)\rightarrow A_p(\overline{u}),\ A_q(\overline{u}_n)\rightarrow A_q(\overline{u})\ \mbox{in $W^{1,p(z)}(\Omega)$}$$
and
$$\int_\Omega |\overline{u}_n|^{p(z)-2}\overline{u}_nhdz\rightarrow \int_\Omega |\overline{u}|^{p(z)-2}\overline{u}hdz.$$
Hence in the limit we have
\begin{eqnarray*}
% \nonumber % Remove numbering (before each equation)
  && A_p(\overline{u})+A_q(\overline{u})+K_p(\overline{u})=0, \\
  &\Rightarrow& \overline{u}=0.
\end{eqnarray*}
Therefore we obtain
$$
\overline{u}_\eta\to 0 \mbox{ in } C^1(\overline{\Omega}) \mbox{ as } \eta\to 0^+ \mbox{ (see \eqref{eq5}). }
$$
The proof is complete.
\end{proof}

Using Proposition \ref{prop4}, we see that for $\eta \in (0,1)$ small we have
\begin{equation}\label{eq6}
  0\leq \overline{u}_\eta(z)\leq 1 \mbox{ for all }z\in\overline{\Omega}.
\end{equation}
For such an $\eta\in(0,1)$, let $m_\eta=\displaystyle{\min_{\overline{\Omega}}\overline{u}_\eta}>0$ (recall that $\overline{u}_\eta\in {\rm int}\,C_+$). Then let
\begin{equation}\label{eq7}
  \hat{\lambda}=\frac{\|N_f(\overline{u}_\eta)\|_\infty}{m_\eta^{p_+-1}}+1>0
\end{equation}
$$
\mbox{ (see hypothesis $H_1(i)$). }
$$
We will show that $\hat{\lambda}\in\mathcal{L}^+$ and so $\mathcal{L}^+\not=\emptyset$.

\begin{prop}\label{prop5}
  If hypotheses $H_0$, $H_1$ hold, then $\mathcal{L}^+\not=\emptyset$ and $S_\lambda^+\subseteq {\rm int}\,C_+$ for every $\lambda\in\RR$.
\end{prop}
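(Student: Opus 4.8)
The plan is to treat the two assertions separately, since the inclusion $S_\lambda^+\subseteq{\rm int}\,C_+$ holds for every $\lambda\in\RR$ (it is a regularity/maximum-principle fact) while the nonemptiness of $\mathcal{L}^+$ is the statement $\hat\lambda\in\mathcal{L}^+$, to be obtained by a supersolution-plus-truncation argument based on the torsion-type function $\overline u_\eta$ of Proposition \ref{prop4}. I would prove the inclusion first and then invoke it at the end of the nonemptiness argument.

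\emph{The inclusion.} Take $\lambda\in\RR$ and $u\in S_\lambda^+$, so $u\ge0$, $u\ne0$, $-\Delta_{p(z)}u-\Delta_{q(z)}u+\lambda u^{p(z)-1}=f(z,u)$ in $\Omega$ and $\partial u/\partial n=0$ on $\partial\Omega$. By $H_1(i)$ the reaction has subcritical growth, so the anisotropic global boundedness of Winkert \& Zacher \cite{16Win-Zach} (cf. \cite[Proposition A1]{11Pap-Rad-Zhang}) gives $u\in L^\infty(\Omega)$, and the anisotropic $C^{1,\alpha}$-regularity of Fan \cite{3Fan} and Tan \& Fang \cite{15Tan-Fang} then gives $u\in C_+\setminus\{0\}$. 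From the third inequality in $H_1(iv)$, $f(z,u)\ge-\widehat C u^{p(z)-1}$, so $\Delta_{p(z)}u+\Delta_{q(z)}u=\lambda u^{p(z)-1}-f(z,u)\le(|\lambda|+\widehat C)\,u^{p(z)-1}$ in $\Omega$, and the anisotropic strong maximum principle of Papageorgiou, Qin \& R\u adulescu \cite[Proposition 4]{7Pap-Qin-Rad}---applied exactly as in the proof of Proposition \ref{prop4}---yields $u\in{\rm int}\,C_+$.

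\emph{Nonemptiness of $\mathcal{L}^+$.} I would fix $\eta\in(0,1)$ small enough that \eqref{eq6} holds, let $\hat\lambda$ be given by \eqref{eq7} and $m_\eta=\min_{\overline\Omega}\overline u_\eta>0$. Since $0\le\overline u_\eta\le1$ we have $\overline u_\eta^{\,p(z)-1}\ge m_\eta^{\,p(z)-1}\ge m_\eta^{\,p_+-1}$, so by the choice of $\hat\lambda$ and $H_1(i)$, $(\hat\lambda-1)\overline u_\eta^{\,p(z)-1}\ge\|N_f(\overline u_\eta)\|_\infty\ge f(z,\overline u_\eta)$ a.e.; combined with \eqref{eq3} this gives
\[
-\Delta_{p(z)}\overline u_\eta-\Delta_{q(z)}\overline u_\eta+\hat\lambda\,\overline u_\eta^{\,p(z)-1}
=\eta+(\hat\lambda-1)\overline u_\eta^{\,p(z)-1}\ \ge\ f(z,\overline u_\eta)\quad\text{in }\Omega,
\]
so $\overline u_\eta$ is a weak supersolution of $(P_{\hat\lambda})$. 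Next I would truncate: set $\widehat f(z,x)=f(z,\min\{x^+,\overline u_\eta(z)\})$ (bounded, by $H_1(i)$ and \eqref{eq6}), $\widehat F(z,x)=\int_0^x\widehat f(z,s)\,ds$, and
\[
\psi(u)=\int_\Omega\frac{|Du|^{p(z)}}{p(z)}\,dz+\int_\Omega\frac{|Du|^{q(z)}}{q(z)}\,dz+\hat\lambda\int_\Omega\frac{|u|^{p(z)}}{p(z)}\,dz-\int_\Omega\widehat F(z,u)\,dz,\qquad u\in W^{1,p(z)}(\Omega).
\]
Since $\widehat F$ grows at most linearly and $\hat\lambda>0$, $\psi\in C^1$ is coercive (Proposition \ref{prop2}) and sequentially weakly lower semicontinuous (use the compact embedding of Proposition \ref{prop1}(b) for the last term), hence Theorem \ref{weto} gives a global minimizer $u_0$. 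Testing $\psi'(u_0)=0$ with $u_0^-$ gives $u_0\ge0$; testing with $(u_0-\overline u_\eta)^+$ and using the supersolution inequality together with the monotonicity of $A_p$, $A_q$ and of $t\mapsto t^{p(z)-1}$ gives $u_0\le\overline u_\eta$, so that $\widehat f(\cdot,u_0)=f(\cdot,u_0)$ and $u_0$ solves $(P_{\hat\lambda})$. Finally $u_0\ne0$: for small $t>0$, $t\overline u_\eta\le\delta_0$, so by $H_1(iv)$, $\widehat F(z,t\overline u_\eta)=F(z,t\overline u_\eta)\ge\frac{C_0}{\tau_+}(t\,m_\eta)^{\tau_+}$ a.e., while the remaining three terms of $\psi(t\overline u_\eta)$ are $O(t^{q_-})$ with $\tau_+<q_-$; thus $\psi(t\overline u_\eta)<0=\psi(0)$ for $t$ small, $u_0\ne0$, and $u_0\in S_{\hat\lambda}^+\subseteq{\rm int}\,C_+$ by the first part. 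Hence $\hat\lambda\in\mathcal{L}^+$ and $\mathcal{L}^+\ne\emptyset$.

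I expect the crux to be the supersolution construction together with the weak comparison $u_0\le\overline u_\eta$: because $f(z,\cdot)$ may change sign and is not assumed monotone, it is the large positive potential coefficient $\hat\lambda$ from \eqref{eq7} that both turns $\overline u_\eta$ into a genuine supersolution and keeps $\psi$ coercive (through the $\hat\lambda\int|u|^{p(z)}/p(z)$ term), and the comparison test must be arranged so that on $\{u_0>\overline u_\eta\}$ the truncated reaction and the potential term align with \eqref{eq3}, after which monotonicity closes the argument. The remaining ingredients---the $L^\infty$- and $C^{1,\alpha}$-bounds, the strong maximum principle, and the sign and barrier truncations---are standard and can be quoted rather than reproved.
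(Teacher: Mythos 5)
Your proposal is correct and follows essentially the same route as the paper: the same supersolution $\overline u_\eta$ with the parameter $\hat\lambda$ from \eqref{eq7}, the same truncation at $\overline u_\eta$, direct minimization via Theorem \ref{weto}, the sign and comparison tests with $u_0^-$ and $(u_0-\overline u_\eta)^+$, nontriviality from $H_1(iv)$ and $\tau_+<q_-$, and the Winkert--Zacher bound plus anisotropic regularity and the maximum principle of \cite{7Pap-Qin-Rad} for $S_\lambda^+\subseteq{\rm int}\,C_+$. The only differences are cosmetic (a slightly cleaner derivation of the supersolution inequality and using $t\overline u_\eta$ rather than a generic $tu$, $u\in{\rm int}\,C_+$, in the negativity estimate).
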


\begin{proof}
Let $\overline{u}_\eta\in{\rm int}\,C_+$ and $\hat{\lambda}>0$ be as above. We have
\begin{eqnarray}\nonumber
% \nonumber % Remove numbering (before each equation)
  && -\Delta_{p(z)} \overline{u}_\eta-\Delta_{q(z)}\overline{u}_\eta+
\hat{\lambda}\overline{u}_\eta^{p(z)-1} \\ \nonumber
   &\geq& -\Delta_{p(z)} \overline{u}_\eta-\Delta_{q(z)}\overline{u}_\eta+
\frac{f(z,\overline{u}_\eta(z))}{m_\eta^{p_+-1}}\overline{u}^{p_+-1}_\eta+
\overline{u}_\eta^{p(z)-1} \mbox{ (see \eqref{eq6}, \eqref{eq7}) } \\ \nonumber
  &\geq& \eta+f(z,\overline{u}_\eta) \mbox{ (see Proposition \ref{prop4}) } \\
  &\geq& f(z,\overline{u}_\eta) \mbox{ in } \Omega. \label{eq8}
\end{eqnarray}
We introduce the Carath\'eodory function $\hat{f}(z,x)$ defined by
\begin{equation}\label{eq9}
  \hat{f}(z,x)=\left\{
                 \begin{array}{ll}
                   f(z,x^+), & \hbox{ if }x\leq \overline{u}_\eta(z) \\
                   f(z, \overline{u}_\eta(z)), & \hbox{ if } \overline{u}_\eta(z)<x.
                 \end{array}
               \right.
\end{equation}
We set $\hat{F}(z,x)=\displaystyle{\int_0^x \hat{f}(z,s)ds}$ and consider the $C^1$-functional $\hat{\varphi}: W^{1,p(z)}(\Omega)\to \RR$ defined by
\begin{eqnarray*}
% \nonumber % Remove numbering (before each equation)
  \hat{\varphi}(u)&=& \int_\Omega \frac{1}{p(z)}|Du|^{p(z)}dz+\int_\Omega \frac{1}{q(z)}|Du|^{q(z)}dz +\hat{\lambda}\int_\Omega \frac{1}{p(z)}|u|^{p(z)}dz \\
  &-& \int_\Omega \hat{F}(z,u)dz \mbox{ for all } u\in W^{1,p(z)}(\Omega).
\end{eqnarray*}
We have
\begin{eqnarray*}
% \nonumber % Remove numbering (before each equation)
   && \hat{\varphi}(u)\geq\frac{1}{p_+}\left[\rho_p(Du)+\hat{\lambda}\rho_p(u)\right]
-\int_\Omega \hat{F}(z,u)dz,\\
  &\Rightarrow& \hat{\varphi}(\cdot) \mbox{ is coercive (see \eqref{eq9} and Proposition \ref{prop2}). }
\end{eqnarray*}
Also, using Proposition \ref{prop1} (the anisotropic Sobolev embedding theorem), we see that $\hat{\varphi}(\cdot)$ is sequentially weakly lower semicontinuous. So, by the Weierstrass-Tonelli theorem, we can find $\hat{u}\in W^{1,p(z)}(\Omega)$ such that
\begin{equation}\label{eq10}
  \hat{\varphi}(\hat{u})=\inf\left\{\hat{\varphi}(u):\:u\in W^{1,p(z)}(\Omega)\right\}.
\end{equation}
Let $u\in {\rm int}\,C_+$ and choose $t\in(0,1)$ small such that
\begin{equation}\label{eq11}
  0<tu(z)\leq \min\{\delta_0,\min_{\overline{\Omega}}\overline{u}_\eta\} \mbox{ for all }z\in \overline{\Omega}.
\end{equation}
Here $\delta_0>0$ is as in hypothesis $H_1(iv)$ and recall that $\overline{u}_\eta\in {\rm int}\,C_+$, so that $\displaystyle{\min_{\overline{\Omega}}\overline{u}_\eta>0}$. We have
\begin{eqnarray*}
% \nonumber % Remove numbering (before each equation)
  \hat{\varphi}(tu) &\leq& \frac{t^{q_-}}{q_-}\left[\rho_p(Du)+\rho_q(Du)+\hat{\lambda}\rho_p(u)\right]-
\frac{C_0 t^{\tau_+}}{\tau_+}\rho_{\tau}(u) \\
  && \mbox{ (see \eqref{eq11}, hypothesis $H_1(iv)$ and recall that $t\in(0,1)$) } \\
  &=& C_2 t^{q_-}-C_3t^{\tau_+} \mbox{ for some } C_2,C_3>0.
\end{eqnarray*}
Recall that $\tau_+<q_-$ (see hypothesis $H_1(iv)$). So, choosing $t\in(0,1)$ even smaller if necessary, we have
\begin{eqnarray*}
% \nonumber % Remove numbering (before each equation)
  && \hat{\varphi}(tu)<0, \\
  &\Rightarrow& \hat{\varphi}(\hat{u})<0=\hat{\varphi}(0) \mbox{ (see \eqref{eq10}), } \\
  &\Rightarrow& \hat{u}\not=0.
\end{eqnarray*}
From \eqref{eq10} we have
$$
\hat{\varphi}'(\hat{u})=0,
$$
\begin{equation}\label{eq12}
 \Rightarrow \langle A_p(\hat{u}),h\rangle+\langle A_q(\hat{u}),h\rangle+\hat{\lambda}\int_\Omega |\hat{u}|^{p(z)-2}\hat{u}hdz =\int_\Omega \hat{f}(z,\hat{u})hdz
\end{equation}
for all $h\in W^{1,p(z)}(\Omega)$.
In \eqref{eq12} first we choose $h=-\hat{u}\in W^{1,p(z)}$. Then we have
\begin{eqnarray*}
% \nonumber % Remove numbering (before each equation)
  && \rho_p(D\hat{u}^-)+\rho_q(D\hat{u}^-)+\hat{\lambda}\rho_p(\hat{u}^-)=0 \mbox{ (see \eqref{eq9}, \eqref{eq1}), } \\
  &\Rightarrow& \hat{u}\geq 0,\ \hat{u}\not=0.
\end{eqnarray*}
Next, in \eqref{eq12} we choose $h=(\hat{u}-\overline{u}_\eta)^+\in W^{1,p(z)}(\Omega)$. We have
\begin{eqnarray*}
% \nonumber % Remove numbering (before each equation)
   && \langle A_p(\hat{u}),(\hat{u}-\overline{u}_\eta)^+\rangle+ \langle A_q(\hat{u}), (\hat{u}-\overline{u}_\eta)^+\rangle +\hat{\lambda}\int_\Omega \hat{u}^{p(z)-1}(\hat{u}-\overline{u}_\eta)^+dz  \\
  &=& \int_\Omega f(z,\overline{u}_\eta)(\hat{u}-\overline{u}_\eta)^+dz \mbox{ (see \eqref{eq9}) } \\
  &\leq& \langle A_q(\overline{u}_\eta),(\hat{u}-\overline{u}_\eta)^+\rangle+\langle A_q(\overline{u}_\eta), (\hat{u}-\overline{u}_\eta)^+\rangle+\hat{\lambda}\int_\Omega \overline{u}_\eta^{p(z)-1}(\hat{u}-\overline{u}_\eta)^+dz \\
    && \mbox{ (see \eqref{eq8}), }\\
  \Rightarrow &&\hat{u}\leq  \overline{u}_\eta \mbox{ (see Proposition \ref{prop3}). }
\end{eqnarray*}
So, we have proved that
\begin{equation}\label{eq13}
  \hat{u}\in[0,\overline{u}_\eta],\ \hat{u}\not=0.
\end{equation}
From \eqref{eq12}, \eqref{eq13} and \eqref{eq9} it follows that
$$
\hat{u}\in S_\lambda^+ \mbox{ and so }\hat{\lambda}\in \mathcal{L}^+\not=\emptyset.
$$
Moreover, as before, the anisotropic regularity theory (see \cite{3Fan}, \cite{15Tan-Fang}) and the anisotropic maximum principle (see \cite{7Pap-Qin-Rad}), imply that
$$
S_\lambda^+\subseteq {\rm int}\,C_+ \mbox{ for all }\lambda\in \RR.
$$
The proof is now complete.
\end{proof}

Next, we show that $\mathcal{L}^+$ is connected, more precisely $\mathcal{L}^+$ is an upper half line.

\begin{prop}\label{prop6}
  If hypotheses $H_0$, $H_1$ hold, $\lambda\in \mathcal{L}$ and $\lambda<\mu<\infty$, then $\mu\in \mathcal{L}^+$.
\end{prop}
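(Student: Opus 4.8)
The plan is a sub--supersolution argument combined with a shift of the potential term, the shift being needed because $\mu$ may be non-positive. Reading $\mathcal{L}$ as $\mathcal{L}^+$, fix $\lambda\in\mathcal{L}^+$ and, by Proposition~\ref{prop5}, pick $u_\lambda\in S_\lambda^+\subseteq{\rm int}\,C_+$. Since $\mu>\lambda$ and $u_\lambda>0$, in the weak form of $(P_\mu)$ the $\mu$-potential at $u_\lambda$ exceeds the $\lambda$-potential (which $u_\lambda$ actually balances) by the nonnegative term $(\mu-\lambda)\int_\Omega u_\lambda^{p(z)-1}v\,dz$ for $v\ge 0$, so $u_\lambda$ is a weak supersolution of $(P_\mu)$. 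To recover coercivity I would fix $\beta>\max\{0,\mu\}$ and rewrite $(P_\mu)$ equivalently as the problem driven by $-\Delta_{p(z)}-\Delta_{q(z)}+\beta|\cdot|^{p(z)-2}(\cdot)$ with reaction $g_\mu(z,x)=f(z,x)+(\beta-\mu)x^{p(z)-1}$ for $x\ge 0$ and $g_\mu(z,x)=0$ for $x\le 0$ (cf.\ \eqref{eq1}); the same bookkeeping shows $u_\lambda$ remains a supersolution of this shifted problem, namely $\langle A_p(u_\lambda),v\rangle+\langle A_q(u_\lambda),v\rangle+\beta\int_\Omega u_\lambda^{p(z)-1}v\,dz\ge\int_\Omega g_\mu(z,u_\lambda)v\,dz$ for all $0\le v\in W^{1,p(z)}(\Omega)$.

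Next I would truncate $g_\mu$ below at $0$ and above at $u_\lambda(\cdot)$, exactly as in \eqref{eq9} with $u_\lambda$ in place of $\overline{u}_\eta$; call the truncation $\hat g_\mu$ and set $\hat G_\mu(z,x)=\int_0^x\hat g_\mu(z,s)\,ds$. Because $u_\lambda\in L^\infty(\Omega)$, hypothesis $H_1(i)$ makes $\hat g_\mu$ bounded, hence $|\hat G_\mu(z,x)|\le c\,|x|$ for some $c>0$. I would then minimize over $W^{1,p(z)}(\Omega)$ the $C^1$-functional
$$\psi_\mu(u)=\int_\Omega\frac{|Du|^{p(z)}}{p(z)}\,dz+\int_\Omega\frac{|Du|^{q(z)}}{q(z)}\,dz+\beta\int_\Omega\frac{|u|^{p(z)}}{p(z)}\,dz-\int_\Omega\hat G_\mu(z,u)\,dz.$$
The role of the shift is exactly that $\beta>0$ makes the first and third integrals control $\tfrac{\min\{1,\beta\}}{p_+}\bigl(\rho_p(Du)+\rho_p(u)\bigr)$, which by Proposition~\ref{prop2} grows at least like $\|u\|^{p_-}$ as $\|u\|\to\infty$; since $p_->1$ this dominates the at most linear term $\int_\Omega\hat G_\mu(z,u)\,dz$, so $\psi_\mu$ is coercive, and it is sequentially weakly lower semicontinuous by Proposition~\ref{prop1}. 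Theorem~\ref{weto} then yields a global minimizer $\tilde u\in W^{1,p(z)}(\Omega)$, hence $\psi_\mu'(\tilde u)=0$.

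Finally I would read off $0\le\tilde u\le u_\lambda$ from $\psi_\mu'(\tilde u)=0$: testing with $-\tilde u^-$ and using $\hat g_\mu(z,\cdot)\equiv 0$ on $(-\infty,0]$ gives $\rho_p(D\tilde u^-)+\rho_q(D\tilde u^-)+\beta\rho_p(\tilde u^-)=0$, so $\tilde u\ge 0$; testing with $(\tilde u-u_\lambda)^+$, using that $\hat g_\mu(z,\tilde u)=g_\mu(z,u_\lambda)$ on $\{\tilde u>u_\lambda\}$ together with the supersolution inequality and the strict monotonicity of $A_p$, $A_q$ and of $x\mapsto x^{p(z)-1}$ (Proposition~\ref{prop3}), forces $|\{\tilde u>u_\lambda\}|_N=0$. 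On the order interval $[0,u_\lambda]$ the truncation is inactive and the term $(\beta-\mu)x^{p(z)-1}$ cancels against the shift, so $\tilde u$ solves $(P_\mu)$ weakly and $\tilde u\ge 0$. To exclude $\tilde u=0$ I would argue as in Proposition~\ref{prop5}: for $u_0\in{\rm int}\,C_+$ and $t\in(0,1)$ small enough that $tu_0(z)\le\min\{\delta_0,\min_{\overline\Omega}u_\lambda\}$ for all $z$, hypothesis $H_1(iv)$ gives $\hat G_\mu(z,tu_0)\ge\tfrac{C_0}{\tau_+}(tu_0)^{\tau(z)}$, so $\psi_\mu(tu_0)\le C_1 t^{q_-}-C_2 t^{\tau_+}$ with $C_1,C_2>0$, which is negative for $t$ small since $\tau_+<q_-$; hence $\psi_\mu(\tilde u)<0=\psi_\mu(0)$, i.e.\ $\tilde u\ne 0$. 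Then anisotropic regularity (\cite{3Fan},\cite{15Tan-Fang}) and the anisotropic maximum principle (\cite{7Pap-Qin-Rad}) give $\tilde u\in{\rm int}\,C_+$, so $\tilde u\in S_\mu^+$ and $\mu\in\mathcal{L}^+$. I expect the main obstacle to be exactly the loss of coercivity when $\mu\le 0$: the $\beta$-shift repairs it, but one must then check (via Proposition~\ref{prop2}) that the shifted truncated functional is still coercive and that the comparison with the supersolution $u_\lambda$ carries over to the shifted operator.
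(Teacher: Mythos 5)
Your proposal is correct and follows essentially the same route as the paper: use $u_\lambda\in S_\lambda^+$ as a supersolution, truncate the reaction at $u_\lambda$, shift the potential to restore coercivity (your $\beta>\max\{0,\mu\}$ is exactly the paper's $\vartheta>-\mu$ with $\vartheta=\beta-\mu$), minimize the truncated functional, rule out the zero minimizer via $H_1(iv)$ and $\tau_+<q_-$, and squeeze the minimizer into $[0,u_\lambda]$ by testing with $-\tilde u^-$ and $(\tilde u-u_\lambda)^+$. No substantive differences.
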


\begin{proof}

Since by hypothesis $\lambda\in \mathcal{L}^+$, we can find $u_\lambda\in S^+_\lambda\subseteq {\rm int}\,C_+$. Then we have
\begin{eqnarray}\nonumber
% \nonumber % Remove numbering (before each equation)
  && -\Delta_{p(z)} u_\lambda-\Delta_{q(z)}u_\lambda +\mu u_\lambda^{p(z)-1} \\
  &\geq& -\Delta_{p(z)}u_\lambda-\Delta_{q(z)}u_\lambda +\lambda u_\lambda^{p(z)-1}=f(z,u_\lambda) \mbox{ in }\Omega. \label{eq14}
\end{eqnarray}
Let $\vartheta>-\mu$ and consider the Carath\' eodory function $k(z,x)$ defined by
\begin{equation}\label{eq15}
  k(z,x)=\left\{
           \begin{array}{ll}
             f(z,x^+)+\vartheta(x^+)^{p(z)-1}, & \hbox{ if } x\leq u_\lambda(z) \\
             f(z,u_\lambda(z))+\vartheta u_\lambda(z)^{p(z)-1}, & \hbox{ if } u_\lambda(z)<x.
           \end{array}
         \right.
\end{equation}
Let $K(z,x)=\displaystyle{\int_0^x k(z,s)ds}$ and consider the $C^1$-functional $\hat{\varphi}_\mu: W^{1,p(z)}(\Omega)\to\RR$ defined by
\begin{eqnarray*}
% \nonumber % Remove numbering (before each equation)
  \hat{\varphi}_{\mu}(u) &=& \int_\Omega \frac{1}{p(z)}|Du|^{p(z)}dz+\int_\Omega \frac{1}{q(z)}|Du|^{q(z)}dz +\int_\Omega \frac{\vartheta+\mu}{p(z)}|u|^{p(z)}dz \\
  &-& \int_\Omega K(z,u)dz \mbox{ for all }u\in W^{1,p(z)}(\Omega).
\end{eqnarray*}
Since $\vartheta+\mu>0$ from \eqref{eq15} and Proposition \ref{prop2}, we see that
$$
\hat{\varphi}_\mu(\cdot) \mbox{ is coercive. }
$$
Also using Proposition \ref{prop1} (the anisotropic Sobolev embedding theorem), we infer that
$$
\hat{\varphi}_\mu(\cdot) \mbox{ is sequentially weakly lower semicontinuous. }
$$
So, we can find $u_\mu\in W^{1,p(z)}(\Omega)$ such that
\begin{equation}\label{eq16}
  \hat{\varphi}_\mu(u_\mu)=\inf\left\{\hat{\varphi}_\mu(u):\:u\in W^{1,p(z)}(\Omega)\right\}.
\end{equation}
As in the proof of Proposition \ref{prop5}, via hypothesis $H_1(iv)$, we show that
\begin{eqnarray*}
% \nonumber % Remove numbering (before each equation)
   && \hat{\varphi}_\mu(u_\mu)<0=\hat{\varphi}_\mu(0), \\
  &\Rightarrow& u_\mu\not=0.
\end{eqnarray*}
From \eqref{eq16} we have that
\begin{equation}\label{eq17}
  \langle\hat{\varphi}'_\mu(u_\mu),h\rangle=0 \mbox{ for all }h\in W^{1,p(z)}(\Omega).
\end{equation}
As before (see the proof of Proposition \ref{prop5}), choosing $h=-u_\mu^-\in W^{1,p(z)}(\Omega)$ and $h=(u_\mu-u_\lambda)^+\in W^{1,p(z)}(\Omega)$ in \eqref{eq17}, we show that
\begin{equation}\label{eq18}
  u_\mu\in[0,\mu_\lambda],\;u_\mu\not=0,\;u_\mu\not=u_\lambda \mbox{ (since $\lambda<\mu$). }
\end{equation}
From \eqref{eq17}, \eqref{eq18} and \eqref{eq15}, we deduce that
$$
u_\mu\in S_\mu^+\subseteq{\rm int}\,C_+ \mbox{ and so }\mu\in \mathcal{L}^+.
$$
This completes the proof.
\end{proof}

An interesting byproduct of the above proof, is the following corollary.

\begin{cor}\label{cor7}
  If hypotheses $H_0$, $H_1$ hold, $\lambda\in\mathcal{L}^+$, $u_\lambda\in S_\lambda^+$ and $\lambda<\mu<\infty$, then $\mu\in \mathcal{L}^+$ and we can find $u_\mu\in S_\mu^+$ such that
$$
u_\mu\leq u_\lambda.
$$
\end{cor}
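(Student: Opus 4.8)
The plan is to reread the proof of Proposition~\ref{prop6} and to observe that the comparison $u_\mu\le u_\lambda$ is already built into the construction performed there. Concretely, instead of letting $u_\lambda$ be an arbitrary element of $S_\lambda^+$, I would run the argument starting from the \emph{prescribed} $u_\lambda$ from the statement; since $S_\lambda^+\subseteq{\rm int}\,C_+$ by Proposition~\ref{prop5}, we still have $u_\lambda\in{\rm int}\,C_+$, hence $\min_{\overline\Omega}u_\lambda>0$, and the differential inequality \eqref{eq14} (which uses only $\mu>\lambda$ and that $u_\lambda$ is a solution of $(P_\lambda)$) remains valid for this particular $u_\lambda$.

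Next I would fix $\vartheta>-\mu$, define the truncated Carath\'eodory function $k(z,x)$ exactly as in \eqref{eq15} with this $u_\lambda$, and introduce the associated energy functional $\hat\varphi_\mu$ on $W^{1,p(z)}(\Omega)$ as in the proof of Proposition~\ref{prop6}. Since $\vartheta+\mu>0$, Proposition~\ref{prop2} gives coercivity of $\hat\varphi_\mu$, and Proposition~\ref{prop1} (the anisotropic Sobolev embedding) gives sequential weak lower semicontinuity, so the Weierstrass--Tonelli theorem (Theorem~\ref{weto}) produces a global minimizer $u_\mu\in W^{1,p(z)}(\Omega)$. Evaluating $\hat\varphi_\mu$ along $tu$ for $u\in{\rm int}\,C_+$ and $t\downarrow 0$, and using hypothesis $H_1(iv)$ together with $\tau_+<q_-$, one obtains $\hat\varphi_\mu(u_\mu)<0=\hat\varphi_\mu(0)$, so $u_\mu\neq 0$.

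Then I would extract the content of \eqref{eq17}--\eqref{eq18}: testing the Euler equation $\langle\hat\varphi_\mu'(u_\mu),h\rangle=0$ with $h=-u_\mu^-$ and invoking \eqref{eq1}, \eqref{eq15} and Proposition~\ref{prop2} yields $u_\mu\ge 0$, $u_\mu\neq 0$; testing with $h=(u_\mu-u_\lambda)^+$, substituting the truncation \eqref{eq15}, using the differential inequality \eqref{eq14} and the monotonicity properties recorded in Proposition~\ref{prop3}, yields $u_\mu\le u_\lambda$. Thus $u_\mu\in[0,u_\lambda]$ with $u_\mu\neq 0$, and on this order interval the cut-off in \eqref{eq15} is inactive, so $u_\mu$ actually solves $(P_\mu)$; the anisotropic regularity theory (see \cite{3Fan}, \cite{15Tan-Fang}) and the anisotropic maximum principle (see \cite{7Pap-Qin-Rad}) then place $u_\mu\in S_\mu^+\subseteq{\rm int}\,C_+$. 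In particular $\mu\in\mathcal{L}^+$ and $u_\mu\le u_\lambda$, as claimed.

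Since every ingredient was already carried out in the proof of Proposition~\ref{prop6}, there is no genuine obstacle here; the only point requiring care is bookkeeping — one must initiate the construction from the \emph{given} $u_\lambda$, so that the order relation $u_\mu\le u_\lambda$, which is merely a side effect of truncating at $u_\lambda$ in \eqref{eq15}, becomes precisely the assertion of the corollary rather than a comparison with some auxiliary function.
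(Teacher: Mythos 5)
Your proposal is correct and coincides with the paper's own reasoning: Corollary \ref{cor7} is stated there precisely as a byproduct of the proof of Proposition \ref{prop6}, where the truncation \eqref{eq15} at the given $u_\lambda$ forces the minimizer $u_\mu$ into the order interval $[0,u_\lambda]$ (see \eqref{eq18}), so $u_\mu\in S_\mu^+$ and $u_\mu\le u_\lambda$. Your only addition is the explicit bookkeeping remark that the construction is run from the prescribed $u_\lambda$, which is exactly what the paper intends.
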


We can improve the assertion of this corollary as follows.
\begin{prop}\label{prop8}
  If hypotheses $H_0$, $H_1$ hold, $\lambda\in \mathcal{L}^+$, $u_\lambda\in S_\lambda^+$ and $\lambda<\mu<\infty$, then $\mu\in \mathcal{L}^+$ and we can find $u_\mu\in S_\mu^+$ such that
$$
u_\lambda-u_\mu\in D_+.
$$
\end{prop}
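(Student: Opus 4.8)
The plan is to bootstrap from Corollary \ref{cor7}: it already supplies $\mu\in\mathcal{L}^+$ together with a solution $u_\mu\in S_\mu^+$ satisfying $u_\mu\leq u_\lambda$, and by Proposition \ref{prop5} both $u_\lambda$ and $u_\mu$ lie in ${\rm int}\,C_+$. What remains is to upgrade the weak ordering $u_\mu\leq u_\lambda$ (with $u_\mu\neq u_\lambda$, since $\lambda<\mu$) to the strong ordering $u_\lambda-u_\mu\in D_+$, and the heart of the matter is to produce a \emph{strict} pointwise differential inequality between the two solutions and then invoke an anisotropic strong comparison (tangency) principle.

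First I would set $\rho=\|u_\lambda\|_\infty$, so that $0\leq u_\mu(z)\leq u_\lambda(z)\leq\rho$ for all $z\in\overline{\Omega}$, and take $\hat{\xi}_\rho>0$ from hypothesis $H_1(v)$, so that $x\mapsto f(z,x)+\hat{\xi}_\rho x^{p(z)-1}$ is nondecreasing on $[0,\rho]$ for a.a. $z\in\Omega$. Using that $u_\mu$ solves $(P_\mu)$ and $u_\lambda$ solves $(P_\lambda)$, one computes, for a.a. $z\in\Omega$,
\begin{align*}
-\Delta_{p(z)}u_\mu-\Delta_{q(z)}u_\mu+(\mu+\hat{\xi}_\rho)u_\mu^{p(z)-1}
&=f(z,u_\mu)+\hat{\xi}_\rho u_\mu^{p(z)-1}\\
&\leq f(z,u_\lambda)+\hat{\xi}_\rho u_\lambda^{p(z)-1}\\
&=-\Delta_{p(z)}u_\lambda-\Delta_{q(z)}u_\lambda+(\mu+\hat{\xi}_\rho)u_\lambda^{p(z)-1}-(\mu-\lambda)u_\lambda^{p(z)-1},
\end{align*}
where the middle inequality uses the monotonicity from $H_1(v)$ together with $0\leq u_\mu\leq u_\lambda\leq\rho$. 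Since $u_\lambda\in{\rm int}\,C_+$, we have $m_\lambda:=\min_{\overline{\Omega}}u_\lambda>0$, hence $(\mu-\lambda)u_\lambda(z)^{p(z)-1}\geq(\mu-\lambda)\min\{m_\lambda^{p_+-1},m_\lambda^{p_--1}\}>0$ for all $z\in\overline{\Omega}$; thus the subtracted term is bounded below by a positive constant, which is exactly the ``gap'' needed for a strong comparison argument.

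Then I would apply the anisotropic strong comparison principle for the $(p,q)$-Laplacian, in the spirit of the anisotropic maximum principle of Papageorgiou, Qin \& R\u adulescu \cite{7Pap-Qin-Rad}: from $u_\mu\leq u_\lambda$, $u_\mu\neq u_\lambda$, and the strict differential inequality above with the uniformly positive gap $(\mu-\lambda)u_\lambda^{p(\cdot)-1}$, one concludes that $u_\lambda-u_\mu\in D_+$. The main obstacle I anticipate is precisely this final step: because the anisotropic $(p,q)$-Laplacian is neither homogeneous nor reducible to a constant-exponent divergence form, the classical V\'azquez-type strong maximum and comparison principles do not apply verbatim, and one must rely on (or carefully adapt) an anisotropic tangency principle — the uniform positivity of the gap term, guaranteed by $u_\lambda\in{\rm int}\,C_+$ and $\lambda<\mu$, being the structural feature that makes such a principle applicable. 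Everything else — the reduction via Corollary \ref{cor7}, the choice of $\hat{\xi}_\rho$ via $H_1(v)$, and the pointwise estimate — is routine.
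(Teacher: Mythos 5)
Your proposal is correct and follows essentially the same route as the paper: reduce to Corollary \ref{cor7} to get $u_\mu\leq u_\lambda$, use $H_1(v)$ with $\rho=\|u_\lambda\|_\infty$ to produce a differential inequality between the two solutions whose gap term $(\mu-\lambda)u^{p(z)-1}$ is bounded below by a positive constant (the paper bounds it via $\min_{\overline{\Omega}}u_\mu$, you via $\min_{\overline{\Omega}}u_\lambda$ — an immaterial difference), and then invoke the anisotropic strong comparison result of Papageorgiou, Qin \& R\u adulescu \cite{7Pap-Qin-Rad} (Proposition 5 there), which is exactly the tool the paper cites at this final step.
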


\begin{proof}
  From Corollary \ref{cor7} we already know that $\mu\in \mathcal{L}^+$ and we can find $u_\mu\in S_\mu^+\subseteq{\rm int}\,C_+$ such that
\begin{equation}\label{eq19}
  0\leq u_\mu\leq u_\lambda.
\end{equation}

Let $\rho=\|u_\lambda\|_\infty$ and let $\hat{\xi}_\rho>0$ be as postulated by hypothesis $H_1(v)$. We have
\begin{eqnarray}\nonumber
% \nonumber % Remove numbering (before each equation)
  && -\Delta_{p(z)}u_\lambda-\Delta_{q(z)}u_\lambda +[\lambda+\hat{\xi}_\rho]u_\lambda^{p(z)-1}\\ \nonumber
  &=& f(z,u_\lambda)+\hat{\xi}_\rho u_\lambda^{p(z)-1} \\ \nonumber
  &\geq& f(z,u_\mu)+\hat{\xi}_\rho u_\mu^{p(z)-1} \mbox{ (see \eqref{eq19} and hypothesis $H_1(v)$) } \\ \nonumber
  &=& -\Delta_{p(z)} u_\mu-\Delta_{q(z)} u_\mu +\left[\mu+\hat{\xi}_\rho\right]u_\mu^{p(z)-1}  \mbox{ (since $u_\mu\in S_\mu^+$) }  \\ \nonumber
  &=& -\Delta_{p(z)} u_\mu-\Delta_{q(z)} u_\mu +[\lambda+\hat{\xi}_\rho]u_\mu^{p(z)-1}+(\mu-\lambda)u_\mu^{p(z)-1} \\
  &\geq& -\Delta_{p(z)} u_\mu-\Delta_{q(z)} u_\mu +[\lambda+\hat{\xi}_\rho]u_\mu^{p(z)-1} \mbox{ (since $\lambda<\mu$). }\label{eq20}
\end{eqnarray}
We know that $u_\mu\in {\rm int}\,C_+$. Hence $0<m_\mu=\displaystyle{\min_{\overline{\Omega}}u_\mu}$. We set $\hat{m}_\mu=\displaystyle{\min\{m_\mu,1\}}>0$. Then
$$
0<[\mu-\lambda]\hat{m}_\mu^{p_+-1}\leq [\mu-\lambda]u_\mu^{p(z)-1} \mbox{ for all }z\in \overline{\Omega.}
$$
Then from \eqref{eq20} and Proposition 5 of Papageorgiou, Qin \& R\u adulescu \cite{7Pap-Qin-Rad}, we infer that $u_\lambda-u_\mu\in D_+$.
The proof is now complete.
\end{proof}

Let $\lambda_*=\inf\mathcal{L}^+$.

\begin{prop}\label{prop9}
  If hypotheses $H_0$, $H_1$ hold, then $\lambda_*>-\infty$.
\end{prop}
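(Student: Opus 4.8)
The plan is to show that $\mathcal{L}^+$ is bounded below by $-\hat{C}$, where $\hat{C}>0$ is the constant furnished by hypothesis $H_1(iv)$; this at once gives $\lambda_*=\inf\mathcal{L}^+\geq-\hat{C}>-\infty$. So I would fix an arbitrary $\lambda\in\mathcal{L}^+$ and pick $u_\lambda\in S_\lambda^+$. By Proposition \ref{prop5} we have $u_\lambda\in{\rm int}\,C_+$, hence $u_\lambda(z)>0$ for all $z\in\overline{\Omega}$ and in particular $\int_\Omega u_\lambda^{p(z)-1}\,dz>0$.

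The key point, and the place where the Neumann boundary condition is essential, is that the constant function $1$ belongs to $W^{1,p(z)}(\Omega)$ (since $\Omega$ is bounded) and has $D1=0$. Testing the weak formulation of $(P_\lambda)$ with $h\equiv 1$ therefore annihilates the $A_p(u_\lambda)$ and $A_q(u_\lambda)$ terms and, using $u_\lambda\geq 0$, leaves
$$\lambda\int_\Omega u_\lambda^{p(z)-1}\,dz=\int_\Omega f(z,u_\lambda)\,dz.$$
Then I would invoke the one-sided lower growth bound in $H_1(iv)$, namely $-\hat{C}x^{p(z)-1}\leq f(z,x)$ for a.a. $z\in\Omega$ and all $x\geq0$, evaluated at $x=u_\lambda(z)\geq0$. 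This gives $\int_\Omega f(z,u_\lambda)\,dz\geq-\hat{C}\int_\Omega u_\lambda^{p(z)-1}\,dz$, hence $(\lambda+\hat{C})\int_\Omega u_\lambda^{p(z)-1}\,dz\geq0$. Since $\int_\Omega u_\lambda^{p(z)-1}\,dz>0$, we conclude $\lambda\geq-\hat{C}$, and as $\lambda\in\mathcal{L}^+$ was arbitrary, $\mathcal{L}^+\subseteq[-\hat{C},\infty)$, so $\lambda_*\geq-\hat{C}>-\infty$.

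There is no serious obstacle here; the argument is essentially a one-line test-function computation. The only subtlety worth flagging is that the admissibility of the constant test function — which is exactly what makes the quasilinear part drop out — genuinely relies on the Neumann condition (for a Dirichlet problem one would have $1\notin W_0^{1,p(z)}(\Omega)$ and this reasoning would break down). It is also worth noting that the inclusion $S_\lambda^+\subseteq{\rm int}\,C_+$ from Proposition \ref{prop5} is used here only to guarantee strict positivity of $\int_\Omega u_\lambda^{p(z)-1}\,dz$; positivity a.e. of any nonzero positive solution would already suffice.
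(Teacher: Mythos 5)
Your argument is correct and is essentially the paper's own proof: test the weak formulation with the constant function $h\equiv 1$ so that the $A_p$ and $A_q$ terms vanish, then use the lower bound $-\hat{C}x^{p(z)-1}\leq f(z,x)$ from $H_1(iv)$ to conclude $\lambda\geq-\hat{C}$. The only (immaterial) difference is that you bound every $\lambda\in\mathcal{L}^+$ directly, whereas the paper applies the same computation to an arbitrary $\lambda>\lambda_*$ (which lies in $\mathcal{L}^+$ by Proposition \ref{prop6}); both yield $\lambda_*\geq-\hat{C}>-\infty$.
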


\begin{proof}
  Let $\lambda>\lambda_*$. Then on account of Proposition \ref{prop6}, we have $\lambda\in\mathcal{L}^+$. So, we can find $u\in S_\lambda^+\subseteq{\rm int}\,C_+$ and we have
\begin{equation}\label{eq21}
  \langle A_p(u),h\rangle +\langle A_q(u),h\rangle+\lambda\int_\Omega u^{p(z)-1}hdz=\int_\Omega f(z,u)hdz
\end{equation}
for all $h\in W^{1,p(z)}(\Omega)$.

In \eqref{eq21} we choose $h\equiv1\in W^{1,p(z)}(\Omega)$. Then
\begin{eqnarray*}
% \nonumber % Remove numbering (before each equation)
  && \lambda\int_\Omega u^{p(z)-1}dz=\int_\Omega f(z,u)dz\geq -\hat{C}\int_\Omega u^{p(z)-1}dz \\
  &\Rightarrow& (\lambda+\hat{C})\int_\Omega u^{p(z)-1}dz\geq0\\
  &\Rightarrow& \lambda+\hat{C}\geq0 \mbox{ and so }\lambda\geq -\hat{C}.
\end{eqnarray*}
So, we conclude that $\lambda_*\geq-\hat{C}>-\infty$.
\end{proof}

By imposing a sign  condition on $f(z,\cdot)$, we can have that $\mathcal{L}^+\subseteq\RR_+=[0,+\infty]$, that is, $\lambda_*\geq0$.

The new conditions on the reaction $f(z,x)$ are the following.

\smallskip
$H'_1:$ $f:\Omega\times\RR\to\RR$ is a Carath\' eodory function such that $f(z,0)=0$ for a.a. $z\in\Omega$, hypotheses $H'_1(i),(ii),(iii),(v)$ are the same as the corresponding hypotheses $H_1(i),(ii),(iii),(v)$ and\\
$(iv)$ there exist $\tau\in C(\overline{\Omega})$ and $C_0,\delta_0>0$ such that
\begin{eqnarray*}
  && 1<\tau_+<q_- \\
  && C_0x^{\tau(z)-1}\leq f(z,x) \mbox{ for a.a. }z\in\Omega, \mbox{ all }0\leq x\leq \delta_0 \\
  \mbox{ and }&& 0\leq f(z,x) \mbox{ for a.a. }z\in\Omega, \mbox{ all }x\geq0.
\end{eqnarray*}

\begin{rem}
  So, the new conditions of $f(z,\cdot)$ require that $f(z,\cdot)\Big|_{\RR_+}$ is nonnegative (it can not change sign). This was the case with the reactions in the works of Fan \& Deng \cite{4Fan-Deng} and Deng \& Wang \cite{1Den-Wan}.
\end{rem}

Under the above stronger conditions on the reaction $f(z,\cdot)$ we can show that the set $\mathcal{L^+}$ of admissible parameters is a subset of $\RR_+$.

\begin{prop}\label{prop10}
  If hypotheses $H_0$, $H'_1$ hold, then $\lambda_*\geq0$.
\end{prop}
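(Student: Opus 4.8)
The plan is to argue exactly as in the proof of Proposition \ref{prop9}, but now exploiting the stronger sign condition in $H'_1(iv)$ in place of the one-sided bound used there. Recall that $\lambda_*=\inf\mathcal{L}^+$ and that, by Proposition \ref{prop6}, $\mathcal{L}^+$ is an upper half-line (and it is nonempty, since $H'_1$ forces $H_1$ with any $\hat C>0$, so Proposition \ref{prop5} applies). Hence it suffices to show that every $\lambda\in\mathcal{L}^+$ satisfies $\lambda\geq0$.

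So I would fix $\lambda\in\mathcal{L}^+$, pick $u\in S_\lambda^+$, and recall from Proposition \ref{prop5} that $u\in{\rm int}\,C_+$. Writing the weak formulation of $(P_\lambda)$ and testing with the constant function $h\equiv1\in W^{1,p(z)}(\Omega)$ (admissible in the Neumann setting), the two operator terms $\langle A_p(u),1\rangle$ and $\langle A_q(u),1\rangle$ vanish because $D1=0$, leaving
$$
\lambda\int_\Omega u^{p(z)-1}\,dz=\int_\Omega f(z,u)\,dz .
$$
Now I invoke $H'_1(iv)$: since $u(z)>0$ for all $z\in\overline{\Omega}$, we have $f(z,u(z))\geq0$ for a.a. $z\in\Omega$, so the right-hand side is $\geq0$. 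On the other hand, $u\in{\rm int}\,C_+$ gives $\min_{\overline{\Omega}}u>0$, whence $\int_\Omega u^{p(z)-1}\,dz>0$. Dividing, we conclude $\lambda\geq0$, and since $\lambda\in\mathcal{L}^+$ was arbitrary, $\lambda_*=\inf\mathcal{L}^+\geq0$.

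There is essentially no obstacle in this argument; it is the sign-condition-strengthened twin of Proposition \ref{prop9}, where $f\geq-\hat Cx^{p(z)-1}$ yielded only $\lambda\geq-\hat C$, whereas here $f\geq0$ yields the sharper $\lambda\geq0$. The only points requiring (routine) care are that $h\equiv1$ is a genuine test function (immediate because $W^{1,p(z)}(\Omega)$ contains the constants under Neumann conditions) and that the convention $f(z,x)=0$ for $x\leq0$ plays no role, since $u$ is strictly positive on $\overline{\Omega}$.
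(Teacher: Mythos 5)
Your proof is correct and is essentially the same as the paper's: test the weak formulation of $(P_\lambda)$ for a positive solution $u\in{\rm int}\,C_+$ with $h\equiv1$, use $H'_1(iv)$ to see the right-hand side $\int_\Omega f(z,u)\,dz\geq0$, and conclude $\lambda\geq0$ for every admissible $\lambda$, hence $\lambda_*\geq0$. The only cosmetic difference is that the paper argues for $\lambda>\lambda_*$ (via Proposition \ref{prop6}) rather than for arbitrary $\lambda\in\mathcal{L}^+$, which changes nothing.
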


\begin{proof}
  Let $\lambda>\lambda_*$. We know that $\lambda\in \mathcal{L}^+$ and so there exists $u\in S_\lambda^+\subseteq{\rm int}\,C_+$. From \eqref{eq21} with $h\equiv1\in W^{1,p(z)}(\Omega)$, we have
\begin{eqnarray*}
% \nonumber % Remove numbering (before each equation)
   && \lambda\int_\Omega u^{p(z)-1}dz=\int_\Omega f(z,u)dz\geq0 \mbox{ (see $H'_1(iv)$), } \\
  &\Rightarrow& \lambda\geq0 \mbox{ and so }\lambda_*\geq0.
\end{eqnarray*}
The proof is complete.
\end{proof}

On account of hypotheses $H_1(i),(iv)$, we see that we can find $C_4>0$ such that
\begin{equation}\label{eq22}
  f(z,x)\geq C_0x^{\tau(z)-1}-C_4 x^{r(z)-1} \mbox{ for a.a. }z\in\Omega, \mbox{ all }x\geq 0.
\end{equation}
Let $\eta>0$ and let $\hat{\lambda}_\eta=\lambda_*+\eta$. Evidently, $\hat{\lambda}_\eta\in \mathcal{L}^+$ (see Corollary \ref{cor7}). The unilateral growth condition in \eqref{eq22} leads to the following auxiliary anisotropic Neumann problem
\begin{equation}\label{eq23}
\left\{%an
\begin{array}{lll}
-\Delta_{p(z)} u-\Delta_{q(z)} u+\hat{\lambda}_\eta |u|^{p(z)-1}= C_0 u^{\tau(z)-1}- C_4u^{r(z)-1} \text{ in } \Omega,\\
\displaystyle\frac{\partial u}{\partial n}=0 \mbox{ on }\partial\Omega,\ u\geq0.
\end{array}
\right.
\end{equation}

\begin{prop}\label{prop11}
  If hypotheses $H_0$ hold, then problem \eqref{eq23} has a unique positive solution $u_\eta^*\in{\rm int}\,C_+$.
\end{prop}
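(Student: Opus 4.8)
The plan is to produce $u_\eta^*$ as the global minimizer of a coercive energy functional attached to \eqref{eq23}, to upgrade it to a solution in ${\rm int}\,C_+$ exactly as in Propositions \ref{prop4} and \ref{prop5}, and then to deduce uniqueness from the fact that, after division by $t^{p(z)-1}$, the reaction of \eqref{eq23} is strictly decreasing in $t>0$.

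\emph{Existence.} Introduce the Carath\'eodory function $g(z,x)=C_0(x^+)^{\tau(z)-1}-C_4(x^+)^{r(z)-1}$, set $G(z,x)=\int_0^x g(z,s)\,ds$, and define $\psi_\eta\in C^1(W^{1,p(z)}(\Omega))$ by
$$\psi_\eta(u)=\int_\Omega\frac{1}{p(z)}|Du|^{p(z)}dz+\int_\Omega\frac{1}{q(z)}|Du|^{q(z)}dz+\hat{\lambda}_\eta\int_\Omega\frac{1}{p(z)}(u^+)^{p(z)}dz+\int_\Omega\frac{1}{p(z)}(u^-)^{p(z)}dz-\int_\Omega G(z,u)\,dz.$$
The extra term $\int_\Omega\frac{1}{p(z)}(u^-)^{p(z)}dz$ is inert on $C_+$ but makes $\psi_\eta$ coercive even when $\hat{\lambda}_\eta\le0$: indeed $-\int_\Omega G(z,u)\,dz$ contributes the nonnegative term $\int_\Omega\frac{C_4}{r(z)}(u^+)^{r(z)}dz$, and since $\tau(z)<q(z)<p(z)<r(z)$ for all $z\in\overline{\Omega}$, Proposition \ref{prop2} together with the anisotropic Sobolev embedding (Proposition \ref{prop1}) give that $\psi_\eta$ is coercive and sequentially weakly lower semicontinuous. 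By Theorem \ref{weto} there is $u_\eta^*$ with $\psi_\eta(u_\eta^*)=\inf_{W^{1,p(z)}(\Omega)}\psi_\eta$. Evaluating $\psi_\eta$ at $t\hat u$ with $\hat u\in{\rm int}\,C_+$ and $t\in(0,1)$ small, and using $\tau_+<q_-$ exactly as in the proof of Proposition \ref{prop5}, one obtains $\psi_\eta(t\hat u)\le c_1 t^{q_-}-c_2 t^{\tau_+}<0$ for $t$ small, so $\psi_\eta(u_\eta^*)<0=\psi_\eta(0)$ and $u_\eta^*\neq0$.

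\emph{Regularity and positivity.} From $\psi_\eta'(u_\eta^*)=0$, choosing $h=-(u_\eta^*)^-$ as test function and using that $g(z,\cdot)$ vanishes on $(-\infty,0]$, Proposition \ref{prop2} forces $(u_\eta^*)^-=0$; hence $u_\eta^*\ge0$, $u_\eta^*\neq0$, and $u_\eta^*$ is a weak solution of \eqref{eq23}. By Winkert \& Zacher \cite{16Win-Zach} (see also \cite[Proposition A1]{11Pap-Rad-Zhang}) we get $u_\eta^*\in L^\infty(\Omega)$, and then the anisotropic regularity theory (Fan \cite{3Fan}, Tan \& Fang \cite{15Tan-Fang}) gives $u_\eta^*\in C_+\setminus\{0\}$. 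Finally, bounding $\hat{\lambda}_\eta u^{p(z)-1}+C_4 u^{r(z)-1}$ by means of $\|u_\eta^*\|_\infty<\infty$ and discarding the nonpositive term $-C_0 u^{\tau(z)-1}$, one has $\Delta_{p(z)}u_\eta^*+\Delta_{q(z)}u_\eta^*\le\hat{c}\,(u_\eta^*)^{p(z)-1}$ in $\Omega$ for a suitable $\hat{c}>0$, so the anisotropic maximum principle of Papageorgiou, Qin \& R\u adulescu \cite[Proposition 4]{7Pap-Qin-Rad} yields $u_\eta^*\in{\rm int}\,C_+$. The same reasoning shows that any positive solution of \eqref{eq23} lies in ${\rm int}\,C_+$.

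\emph{Uniqueness (the main point).} Since $\tau(z)<p(z)<r(z)$ for all $z\in\overline{\Omega}$, for a.a. $z\in\Omega$ the map
$$(0,\infty)\ni t\longmapsto\frac{C_0 t^{\tau(z)-1}-C_4 t^{r(z)-1}}{t^{p(z)-1}}=C_0 t^{\tau(z)-p(z)}-C_4 t^{r(z)-p(z)}$$
is strictly decreasing. Let $u_1,u_2\in{\rm int}\,C_+$ be two positive solutions of \eqref{eq23}. Because $u_1,u_2\in{\rm int}\,C_+\subseteq C^1(\overline{\Omega})$ are bounded away from $0$ and $p$ is Lipschitz, the functions $\frac{u_1^{p(z)}-u_2^{p(z)}}{u_1^{p(z)-1}}$ and $\frac{u_2^{p(z)}-u_1^{p(z)}}{u_2^{p(z)-1}}$ belong to $W^{1,\infty}(\Omega)\subseteq W^{1,p(z)}(\Omega)$. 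Using them as test functions in the weak formulations of \eqref{eq23} for $u_1$ and for $u_2$, respectively, and adding, the parametric terms $\hat{\lambda}_\eta\int_\Omega(\cdots)dz$ cancel and the right-hand side reduces to
$$\int_\Omega\left[\frac{C_0 u_1^{\tau(z)-1}-C_4 u_1^{r(z)-1}}{u_1^{p(z)-1}}-\frac{C_0 u_2^{\tau(z)-1}-C_4 u_2^{r(z)-1}}{u_2^{p(z)-1}}\right]\left(u_1^{p(z)}-u_2^{p(z)}\right)dz\le0,$$
with equality forcing $u_1=u_2$ a.e. On the other hand the left-hand side, which is the contribution of $A_p+A_q$, must be $\ge0$; hence both sides vanish, $u_1=u_2$, and $u_\eta^*$ is the unique positive solution of \eqref{eq23}. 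The hard part is exactly this last inequality: for constant exponents it is the classical D\'{\i}az--Sa\'{a} (Picone-type) inequality, but for variable $p(\cdot),q(\cdot)$ the identities behind it pick up extra terms carrying $Dp(\cdot)$ and $Dq(\cdot)$, so the sign of the $A_p+A_q$ contribution is not automatic. I would establish it through the convexity of the functionals $w\mapsto\int_\Omega\frac{1}{p(z)}\big|D(w^{1/p(z)})\big|^{p(z)}dz$ and $w\mapsto\int_\Omega\frac{1}{q(z)}\big|D(w^{1/p(z)})\big|^{q(z)}dz$ on the cone $\{w\ge0:\ w^{1/p(z)}\in W^{1,p(z)}(\Omega)\}$, using $q(z)<p(z)$ and the Lipschitz continuity of $p,q$, as in the treatment of analogous anisotropic auxiliary problems.
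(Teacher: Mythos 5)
Your existence and positivity argument is essentially sound; it differs from the paper only in that the paper truncates the reaction at a solution $u\in S_\lambda^+$ (for $\lambda\in(\lambda_*,\hat\lambda_\eta]$) and adds a shift $\vartheta>-\hat\lambda_\eta$ to obtain coercivity, while you rely on the term $-C_4(x^+)^{r(z)-1}$ (with $r_->p_+$) and an auxiliary $(u^-)^{p(z)}$ term; both routes work. The genuine gap is in the uniqueness step, which you yourself call ``the hard part'' and then do not close. The inequality you need --- that the $A_p+A_q$ contribution obtained by testing the equation for $u_1$ with $(u_1^{p(z)}-u_2^{p(z)})/u_1^{p(z)-1}$, the equation for $u_2$ with $(u_2^{p(z)}-u_1^{p(z)})/u_2^{p(z)-1}$, and adding, is nonnegative --- is a D\'{\i}az--Sa\'{a}/Picone inequality with a \emph{variable} comparison power $p(z)$, and no such result is available. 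Your proposed remedy, convexity of $w\mapsto\int_\Omega\frac{1}{p(z)}\big|D\big(w^{1/p(z)}\big)\big|^{p(z)}dz$ (and its $q$-analogue), is not supplied by Taka\v c--Giacomoni \cite{14Tak-Gia}: their Theorem 2.2 concerns $w\mapsto\int_\Omega\frac{1}{p(z)}\big|Dw^{1/s}\big|^{p(z)}dz$ with a \emph{constant} power $s$, and it is exactly the constancy of $s$ that allows the G\^ateaux derivative to be computed via Green's identity as $\frac{1}{s}\int_\Omega\frac{-\Delta_{p(z)}u}{u^{s-1}}\,h\,dz$. With the substitution $w^{1/p(z)}$ the differentiation in $z$ produces additional terms carrying $Dp$ (from $w^{1/p(z)-1}$ and $1/p(z)$), so even if that functional were convex, the monotonicity of its derivative would not reduce to the test-function inequality you wrote down; the sign of the operator contribution, and with it the whole uniqueness argument, remains unproved.

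The paper avoids this by comparing through the constant exponent $q_-$: it uses the convex functional $j(w)=\int_\Omega\frac{1}{p(z)}\big|Dw^{1/q_-}\big|^{p(z)}dz+\int_\Omega\frac{1}{q(z)}\big|Dw^{1/q_-}\big|^{q(z)}dz$ (Taka\v c--Giacomoni), invokes Proposition 4.1.22 of \cite{9Pap-Rad-Rep} to get $u_\eta^*/y_\eta^*,\ y_\eta^*/u_\eta^*\in L^\infty(\Omega)$ so that the direction $h=(u_\eta^*)^{q_-}-(y_\eta^*)^{q_-}$ is admissible, and then combines the monotonicity of $j'$ with the fact that $t\mapsto C_0t^{\tau(z)-q_-}$ and $t\mapsto -C_4t^{r(z)-q_-}$ are decreasing (recall $\tau_+<q_-<p_-<r_-$). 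If you wish to keep your (admittedly elegant) cancellation of the $\hat\lambda_\eta$-term, you must first prove the variable-power Picone-type inequality, which is not in the literature you cite; otherwise you should redo the comparison with a constant power as in the paper. As written, the decisive inequality is asserted, not proved.
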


\begin{proof}
  Let $\lambda\in (\lambda_*,\hat{\lambda}_\eta]$. We know that $\lambda\in \mathcal{L}^+$ (see Proposition \ref{prop6}) and so we can find $u\in S_\lambda^+\subseteq{\rm int}\,C_+$. Let $\vartheta>-\hat{\lambda}_\eta$ and consider the Carath\' eodory function
\begin{equation}\label{eq24}
  \beta(z,x)=\left\{
               \begin{array}{ll}
                 C_0(x^+)^{\tau(z)-1}-C_4(x^+)^{r(z)-1}+\vartheta (x^+)^{p(z)-1}, & \hbox{ if  }x\leq u(z) \\
                 C_0 u(z)^{\tau(z)-1}-C_4u(z)^{r(z)-1}+\vartheta u(z)^{p(z)-1}, & \hbox{ if } u(z)<x.
               \end{array}
             \right.
\end{equation}
We set $B(z,x)=\displaystyle{\int_0^x\beta(z,s)ds}$ and consider the $C^1$-functional $\Psi:W^{1,p(z)}(\Omega)\to \RR$ defined by
\begin{eqnarray*}
% \nonumber % Remove numbering (before each equation)
  \Psi(u) &=& \int_\Omega \frac{1}{p(z)} |Du|^{p(z)}dz+\int_\Omega \frac{1}{q(z)}|Du|^{q(z)}dz +\int_\Omega \frac{\vartheta+\hat{\lambda}_\eta}{p(z)}|u|^{p(z)}dz \\
  &-& \int_\Omega B(z,u)dz \mbox{ for all }u\in W^{1,p(z)}(\Omega).
\end{eqnarray*}
From \eqref{eq24} and since $\vartheta>-\hat{\lambda}_\eta$ we see that $\Psi(\cdot)$ is coercive. Also it is sequentially weakly lower semicontinuous. So, by the Weierstrass-Tonelli theorem, we can find $u_\eta^*\in W^{1,p(z)}(\Omega)$ such that
\begin{equation}\label{eq25}
  \Psi(u_\eta^*)=\inf\left\{\Psi(u):\:u\in W^{1,p(z)}(\Omega)\right\}.
\end{equation}
Since $\tau_+<q_-<p_+<r_-$, if $v\in{\rm int}\,C_+$ and $t\in(0,1)$ is small (at least so that $tv(z)\leq u(z)$ for all $z\in\overline{\Omega}$), then
\begin{eqnarray*}
% \nonumber % Remove numbering (before each equation)
  && \Psi(tv)<0, \\
  &\Rightarrow& \Psi(u_\eta^*)<0=\Psi(0) \mbox{ (see \eqref{eq25}), } \\
  &\Rightarrow& u_\eta^*\not=0.
\end{eqnarray*}
From \eqref{eq25} we have
$$
\Psi'(u_\eta^*)=0,
$$
\begin{eqnarray}\nonumber
% \nonumber % Remove numbering (before each equation)
   \Rightarrow \langle A_p(u_\eta^*),h\rangle+\langle A_q(u_\eta^*),h\rangle\!\!\!&+&\!\!\! (\vartheta+\hat{\lambda}_\eta)\int_\Omega |u_\eta^*|^{p(z)-2}u_\eta^* hdz  \\
  &=& \int_\Omega \beta(z,u_\eta^*)hdz \mbox{ for all }h\in W^{1,p(z)}(\Omega). \label{eq26}
\end{eqnarray}
In \eqref{eq26} first we choose $h=-(u_\eta^*)^-\in W^{1,p(z)}(\Omega)$. Using \eqref{eq24} we obtain
\begin{eqnarray*}
% \nonumber % Remove numbering (before each equation)
  && \rho_p(D(u_\eta^*)^-) +\rho_q((u_\eta^*)^-) +[\vartheta+\hat{\lambda}_\eta]\rho_p((u_\eta^*)^-)=0, \\
  &\Rightarrow& u_\eta^*\geq0,\ u_\eta^*\not=0 \mbox{ (recall that $\vartheta>-\hat{\lambda}_\eta$).}
\end{eqnarray*}
Next, in \eqref{eq26} we choose $(u_\eta^*-u)^+\in W^{1,p(z)}(\Omega)$. Then we have
\begin{eqnarray*}
% \nonumber % Remove numbering (before each equation)
  && \langle A_p(u_\eta^*),(u_\eta^*-u)^+\rangle+\langle A_q(u_\eta^*),(u_\eta^*-u)^+\rangle +(\vartheta+\hat{\lambda}_\eta)\int_\Omega |u_\eta^*|^{p(z)-2}u_\eta^*hdz \\
  &=& \int_\Omega \left[C_0 u^{\tau(z)-1}-C_4u^{r(z)-1}+\vartheta u^{p(z)-1}\right](u_\eta^*-u)^+ \mbox{ (see \eqref{eq24}) } \\
  &\leq& \int_\Omega \left[f(z,u)+\vartheta u^{p(z)-1}\right](u_\eta^*-u)^+ dz \mbox{ (see \eqref{eq22}) } \\
  &\leq& \langle A_p(u),(u_\eta^*-u)^+\rangle+\langle A_q(u), (u_\eta^*-u)^+\rangle +(\vartheta+\hat{\lambda}_\eta)\int_\Omega u^{p(z)-1}(u_\eta^*-u)^+dz \\
  && \mbox{ (since $u\in S_\lambda$ and $\lambda\leq \hat{\lambda}_\eta$), }\\
  \Rightarrow&& u_\eta^*\leq u\mbox{ (see Proposition \ref{prop3}). }
\end{eqnarray*}
So, we have proved that
\begin{equation}\label{eq27}
  u_\eta^*\in[0,u],\; u_\eta^*\not=0.
\end{equation}
From \eqref{eq26}, \eqref{eq27} and \eqref{eq24} it follows that
$$
u_\eta^* \mbox{ is a positive solution of problem \eqref{eq23}. }
$$
As before, the anisotropic regularity theory (\cite{3Fan}, \cite{15Tan-Fang}) and the  anisotropic maximum principle (see \cite{7Pap-Qin-Rad}), imply that
$$
u_\eta^*\in{\rm int}\,C_+.
$$

Next, we show that this positive solution of \eqref{eq23} is unique. To this end, we consider the integral functional $j:L^1(\Omega)\to\overline{\RR}=\RR\cup\{+\infty\}$ defined by
$$
j(u)=\left\{
       \begin{array}{ll}
       \displaystyle{  \int_\Omega \frac{1}{p(z)}\left|Du^{1/q_-}\right|^{p(z)}dz+\int_\Omega \frac{1}{q(z)}\left|Du^{1/q_-}\right|^{q(z)}dz},\\
        \left(\hbox{ if } u\geq0,\ u^{1/q_-}\in W^{1,p(z)}(\Omega)\right).\\
         +\infty,  \left(\hbox{ otherwise }\right).
       \end{array}
     \right.
$$
From Theorem 2.2 of Taka\v c \& Giacomoni \cite{14Tak-Gia}, we know that $j(\cdot)$ is convex. Let ${\rm dom}\,j=\left\{u\in L^1(\Omega):\:j(u)<\infty\right\}$ (the effective domain of $j(\cdot)$). Suppose $y_\eta^*$ is another positive solution of problem \eqref{eq23}. Again we have $y_\eta^*\in{\rm int}\,C_+$. On account of Proposition 4.1.22 of Papageorgiou, R\u adulescu \& Repov\v s \cite[p. 274]{9Pap-Rad-Rep}, we have
\begin{equation}\label{eq28}
  \frac{u_\eta^*}{y_\eta^*}\in L^\infty(\Omega) \mbox{ and }\frac{y_\eta^*}{u_\eta^*}\in L^\infty(\Omega).
\end{equation}
Let $h=(u_\eta^*)^{q_-}(y_\eta^*)^{q_-}$. From \eqref{eq28} and for $|t|<1$ small, we have
$$
(u_\eta^*)+th\in{\rm dom}\:j,\;(y_\eta^*)^{q_-}+th\in {\rm dom}\:j.
$$
Exploiting the convexity of $j(\cdot)$ and using the chain rule, we see that $j(\cdot)$ is G\^ateaux differentiable at $(u_\eta^*)^{q_-}$ and at $(y_\eta^*)^{q_-}$ in the direction $h$. Moreover, via Green's identity, we have
\begin{eqnarray*}
% \nonumber % Remove numbering (before each equation)
  j'\left((u_\eta^*)^{q_-}\right)(h) &=& \frac{1}{q_-}\int_\Omega \frac{-\Delta_{p(z)}u_\eta^*-\Delta_{q(z)}u_\eta^*}{(u_\eta^*)^{q_- -1}}hdz \\
  &=& \frac{1}{q_-}\int_\Omega \left[\frac{C_0}{(u_\eta^*)^{q_- -\tau(z)}}-C_4(u_\eta^*)^{r(z)-q_-}-\hat{\lambda}_\eta(u_\eta^*)^{p(z)-q_-}
\right]hdz,
\end{eqnarray*}

\begin{eqnarray*}
% \nonumber % Remove numbering (before each equation)
  j'\left((y_\eta^*)^{q_-}\right)(h) &=& \frac{1}{q_-}\int_\Omega \frac{-\Delta_{p(z)}y_\eta^*-\Delta_{q(z)}y_\eta^*}{(y_\eta^*)^{q_- -1}}hdz \\
  &=& \frac{1}{q_-}\int_\Omega \left[\frac{C_0}{(y_\eta^*)^{q_- -\tau(z)}}-C_4(y_\eta^*)^{r(z)-q_-}-\hat{\lambda}_\eta(y_\eta^*)^{p(z)-q_-}
\right]hdz.
\end{eqnarray*}
The convexity of $j(\cdot)$ implies that $j'(\cdot)$ is monotone. Then
\begin{eqnarray*}
% \nonumber % Remove numbering (before each equation)
  0 &\leq& \int_\Omega C_0\big[\frac{1}{(u_\eta^*)^{q_- -\tau(z)}}-\frac{1}{(y_\eta^*)^{q_- -\tau(z)}}\big]hdz\\
    &-& \!\!\! \int_\Omega C_4\left[(u_\eta^*)^{r(z)-q_-}-(y_\eta^*)^{r(z)-q_-}\right]hdz \\
  &-& \!\!\!\hat{\lambda}_\eta\int_\Omega \left[(u_\eta^*)^{p(z)-q_-}-(y_n^*)^{p(z)-q_-}\right]hdz\\
  &\leq&0,
\end{eqnarray*}
$$
\Rightarrow u_\eta^*=y_\eta^* \mbox{ (recall that $\tau_+<q_-<p_-$). }
$$
This proves the uniqueness of the positive solution $u_\eta^*\in{\rm int}\,C_+$ of problem \eqref{eq23}.
\end{proof}
This unique positive solution of problem \eqref{eq23} provides a lower bound for the elements of $S_\lambda^+$ locally in $\lambda>\lambda_*$.

\begin{prop}\label{prop12}
  If hypotheses $H_0$, $H_1$ hold, $\eta>0$ and $\lambda\in (\lambda_*,\hat{\lambda}_\eta=\lambda_*+\eta]$, then $u_\eta^*\leq u$ for all $u\in S_\lambda^+$.
\end{prop}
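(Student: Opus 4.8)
The plan is to re-run the variational truncation argument used in the proof of Proposition \ref{prop11}, but now based at an \emph{arbitrary} element $u\in S_\lambda^+$ with $\lambda\in(\lambda_*,\hat{\lambda}_\eta]$, and then to invoke the uniqueness part of Proposition \ref{prop11}. So, fix $\lambda\in(\lambda_*,\hat{\lambda}_\eta]$ and $u\in S_\lambda^+\subseteq{\rm int}\,C_+$. Using the unilateral lower bound \eqref{eq22} together with $\lambda\le\hat{\lambda}_\eta$ and $u^{p(z)-1}\ge0$, I would first check that $u$ is a supersolution of the auxiliary problem \eqref{eq23}:
\begin{align*}
-\Delta_{p(z)}u-\Delta_{q(z)}u+\hat{\lambda}_\eta u^{p(z)-1}
&\ge -\Delta_{p(z)}u-\Delta_{q(z)}u+\lambda u^{p(z)-1}= f(z,u)\\
&\ge C_0 u^{\tau(z)-1}-C_4 u^{r(z)-1}\qquad\mbox{in }\Omega.
\end{align*}

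Next, choose $\vartheta>-\hat{\lambda}_\eta$, define the Carath\'eodory truncation $\beta(z,\cdot)$ exactly as in \eqref{eq24} but with the present $u$ as the truncation level, set $B(z,x)=\int_0^x\beta(z,s)\,ds$, and introduce the $C^1$-functional $\Psi\colon W^{1,p(z)}(\Omega)\to\RR$ as in the proof of Proposition \ref{prop11}. Since $\vartheta+\hat{\lambda}_\eta>0$ and the truncation suppresses the superlinear $C_4x^{r(z)-1}$-term, $\Psi$ is coercive (Proposition \ref{prop2}), and by Proposition \ref{prop1} it is sequentially weakly lower semicontinuous; hence Theorem \ref{weto} yields a global minimizer $\tilde u\in W^{1,p(z)}(\Omega)$. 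Exploiting $\tau_+<q_-$, for $v\in{\rm int}\,C_+$ and $t\in(0,1)$ small one has $\Psi(tv)<0$, so $\Psi(\tilde u)<0=\Psi(0)$ and therefore $\tilde u\ne0$.

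From $\Psi'(\tilde u)=0$ I would then test with $-\tilde u^-\in W^{1,p(z)}(\Omega)$ to obtain $\tilde u\ge0$, $\tilde u\ne0$, and with $(\tilde u-u)^+\in W^{1,p(z)}(\Omega)$, using the supersolution inequality above and the monotonicity of $A_p,A_q$ (Proposition \ref{prop3}), to obtain $\tilde u\le u$. Thus $\tilde u\in[0,u]$, $\tilde u\ne0$, and by the definition \eqref{eq24} of $\beta$ this says that $\tilde u$ solves \eqref{eq23}; the anisotropic regularity theory (\cite{3Fan}, \cite{15Tan-Fang}) and the anisotropic maximum principle (\cite{7Pap-Qin-Rad}) then give $\tilde u\in{\rm int}\,C_+$. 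By the uniqueness assertion of Proposition \ref{prop11}, $\tilde u=u_\eta^*$, whence $u_\eta^*=\tilde u\le u$. Since $u\in S_\lambda^+$ and $\lambda\in(\lambda_*,\hat{\lambda}_\eta]$ were arbitrary, this proves the claim.

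The only point requiring care is the comparison step: one must ensure that \eqref{eq22} holds with precisely the constants $C_0,C_4$ appearing in \eqref{eq23}, and that $\lambda\le\hat{\lambda}_\eta$ is used so that the $\hat{\lambda}_\eta$-term dominates, so that $u$ genuinely is a supersolution of \eqref{eq23}. Granted that, the $(\tilde u-u)^+$-test combined with Proposition \ref{prop3} routinely forces $(\tilde u-u)^+=0$, and the rest is a verbatim repetition of the minimization argument in the proof of Proposition \ref{prop11}.
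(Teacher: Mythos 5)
Your proposal is correct and follows essentially the same route as the paper: truncate at the given $u\in S_\lambda^+$ exactly as in \eqref{eq24}, minimize the coercive, sequentially weakly lower semicontinuous functional, test with $-\tilde u^-$ and $(\tilde u-u)^+$ using \eqref{eq22} and $\lambda\le\hat{\lambda}_\eta$ to land in $[0,u]\setminus\{0\}$, and then conclude via the uniqueness assertion of Proposition \ref{prop11} that the minimizer is $u_\eta^*$. The comparison step you flag is exactly the one the paper carries out, so there is no gap.
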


\begin{proof}
  Let $u\in S_\lambda^+$, $\vartheta>-\hat{\lambda}_\eta$ and consider the Carath\' eodory function $k(z,x)$ defined by
\begin{equation}\label{eq29}
  k(z,x)=\left\{
           \begin{array}{ll}
             C_0(x^+)^{\tau(z)-1}-C_4(x^+)^{r(z)-1}+ \vartheta(x^+)^{p(z)-1}, & \hbox{ if }x\leq u(z) \\
             C_0u(z)^{\tau(z)-1}-C_4 u(z)^{r(z)-1}+\vartheta u(z)^{p(z)-1}, & \hbox{ if }u(z)<x.
           \end{array}
         \right.
\end{equation}
We set $K(z,x)=\displaystyle{\int_0^x k(z,s)ds}$ and consider the $C^1$-functional $\sigma:W^{1,p(z)}(\Omega)\to\RR$ defined by
\begin{eqnarray*}
% \nonumber % Remove numbering (before each equation)
  \sigma(u) &=& \int_\Omega\frac{1}{p(z)}|Du|^{p(z)}dz+\int_\Omega\frac{1}{q(z)}|Du|^{q(z)}dz
+\int_\Omega \frac{\vartheta+\hat{\lambda}_\eta}{p(z)}|u|^{p(z)}dz \\
  &-& \int_\Omega K(z,u)dz \mbox{ for all }u\in W^{1,p(z)}(\Omega).
\end{eqnarray*}
From \eqref{eq29} and since $\vartheta>-\hat{\lambda}_\eta$, we see that $\sigma(\cdot)$ is coercive. Also, using the anisotropic Sobolev embedding theorem (see Proposition \ref{prop1}), we see that
$\sigma(\cdot)$ is sequentially weakly lower semicontinuous. So, by the Weierstrass-Tonelli theorem, we can find $\hat{u}_\eta^*\in W^{1,p(z)}(\Omega)$ such that
\begin{equation}\label{eq30}
  \sigma(\hat{u}_\eta^*)=\min\left\{\sigma(u):\:u\in W^{1,p(z)}(\Omega)\right\}.
\end{equation}
Since $\tau_+<q_-\leq q(z)<p(z)$ for all $z\in\overline{\Omega}$, we see that if $v\in{\rm int}\,C_+$ and $t\in(0,1)$ is small (at least so that $tv\leq u$), we have
\begin{eqnarray*}
% \nonumber % Remove numbering (before each equation)
   && \sigma(tv)<0, \\
  &\Rightarrow& \sigma(\hat{u}_\eta^*)<0=\sigma(0) \mbox{ (see \eqref{eq30}), } \\
  &\Rightarrow& \hat{u}_\eta^*\not=0.
\end{eqnarray*}
From \eqref{eq30} we have $\sigma'(u_\eta^*)=0$, thus
\begin{eqnarray}\nonumber
% \nonumber % Remove numbering (before each equation)
  \langle A_p(\hat{u}_\eta^*),h\rangle+\langle A_q(\hat{u}_\eta^*),h\rangle \!\!\!&+& \!\!\!\int_\Omega(\vartheta+\hat{\lambda}_\eta)|\hat{u}_\eta^*|^{p(z)-2}
\hat{u}_\eta^* hdz \\
  &=& \int_\Omega k(z,\hat{u}_\eta^*)hdz \mbox{ for all }h\in W^{1,p(z)}(\Omega). \label{eq31}
\end{eqnarray}
In \eqref{eq31} first we choose $h=-(\hat{u}_\eta^*)^-\in W^{1,p(z)}(\Omega)$. Using \eqref{eq29}, we obtain
\begin{eqnarray*}
% \nonumber % Remove numbering (before each equation)
  && \rho_p(D(\hat{u}_\eta^*)^-)+\rho_q(D(\hat{u}_\eta^*)^-)+\int_\Omega (\vartheta+\hat{\lambda}_\eta)\left((\hat{u}_\eta^*)^-\right)^{p(z)}dz=0 \\
  &\Rightarrow& \hat{u}_\eta^*\geq0,\;\hat{u}_\eta^*\not=0 \mbox{ (recall that $\vartheta>-\hat{\lambda}_\eta$). }
\end{eqnarray*}
Next, in \eqref{eq31} we choose $h= (\hat{u}_\eta^*-u)^+\in W^{1,p(z)}(\Omega)$. We have
\begin{eqnarray*}
% \nonumber % Remove numbering (before each equation)
  && \langle A_p(\hat{u}_\eta^*), (\hat{u}_\eta^*-u)^+\rangle+\langle A_q(\hat{u}_\eta^*),(\hat{u}_\eta^*-u)^+\rangle+\int_\Omega (\vartheta+\hat{\lambda}_\eta)(\hat{u}_\eta^*)^{p(z)-1}(\hat{u}_\eta^*-u)^+dz\\
  &=& \int_\Omega \left[C_0u^{\tau(z)-1}-C_4 u^{r(z)-1}+\vartheta u^{p(z)-1}\right](\hat{u}_\eta^*-u)^+dz \mbox{ (see \eqref{eq29}) } \\
   &\leq& \int_\Omega \left[f(z,u)+\vartheta u^{p(z)-1}\right](\hat{u}_\eta^*-u)^+dz \mbox{ (see \eqref{eq22}) } \\
  &\leq& \langle A_p(u),(\hat{u}_\eta^*-u)^+\rangle+\langle A_q(u),(\hat{u}_\eta^*-u)^+\rangle+\int_\Omega (\vartheta+\hat{\lambda}_\eta)u^{p(z)-1}(\hat{u}_\eta^*-u)^+dz \\
  && \mbox{ (since $u\in S_\lambda^+$, $\lambda\leq \hat{\lambda}_\eta$), } \\
  \Rightarrow&& \hat{u}_\eta^*\leq u.
\end{eqnarray*}
So, we have proved that
\begin{equation}\label{eq32}
  \hat{u}_\eta^*\in[0,u], \; \hat{u}_\eta^*\not=0.
\end{equation}
From \eqref{eq31}, \eqref{eq32}, \eqref{eq29} and Proposition \ref{prop11}, we conclude that
\begin{eqnarray*}
% \nonumber % Remove numbering (before each equation)
  && \hat{u}_\eta^*=u_\eta^*, \\
  &\Rightarrow& u_\eta^* \leq u \mbox{ for all }u\in S_\lambda^+, \mbox{ all }\lambda\in (\lambda_*,\hat{\lambda}_\eta=\lambda_*+\eta].
\end{eqnarray*}
The proof is now complete.
\end{proof}

\begin{rem}
  This proposition reveals that if hypotheses $H'_1$ hold, then $\lambda_*>0$.
\end{rem}

Next, we show that for all $\lambda>\lambda_*$, we have at least two positive solutions.

\begin{prop}\label{prop13}
  If hypotheses $H_0$, $H_1$ hold and $\lambda>\lambda_*$, then problem $(P_\lambda)$ has at least two positive solutions $u_0$, $\hat{u}\in{\rm int}\,C_+$, $u_0\not=\hat{u}$.
\end{prop}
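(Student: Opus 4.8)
The plan is to fix $\lambda > \lambda_*$, pick an auxiliary value $\mu$ with $\lambda_* < \mu < \lambda$, and use the comparison machinery already developed to trap a first solution below an element of $S_\mu^+$; then to produce a second, distinct solution by a mountain-pass argument. First I would invoke Corollary \ref{cor7} / Proposition \ref{prop8}: since $\mu \in \mathcal{L}^+$, fix $u_\mu \in S_\mu^+ \subseteq {\rm int}\,C_+$, and then Proposition \ref{prop12} gives the extremal lower barrier $u_\eta^* \in {\rm int}\,C_+$ (with $\eta$ chosen so that $\lambda \in (\lambda_*, \lambda_*+\eta]$) satisfying $u_\eta^* \le u$ for every $u \in S_\lambda^+$. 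Using these two functions as an ordered pair of sub- and super-solution substitutes, I would truncate the reaction: define a Carath\'eodory function $g_\lambda(z,x)$ equal to $f(z,x)+\lambda$-type terms for $u_\eta^*(z) \le x \le u_\mu(z)$ and frozen outside this interval, add a large absorption term $\hat\xi_\rho x^{p(z)-1}$ (with $\rho = \|u_\mu\|_\infty$, cf. $H_1(v)$) to make the associated energy functional $\varphi_\lambda$ coercive and sequentially weakly lower semicontinuous. The global minimizer $u_0$ of this truncated functional exists by the Weierstrass–Tonelli theorem (Theorem \ref{weto}), and the usual choice of test functions $h=(u_\eta^*-u_0)^+$ and $h=(u_0-u_\mu)^+$, together with the $(S)_+$-property of $A_p,A_q$ (Proposition \ref{prop3}) and hypothesis $H_1(v)$, forces $u_0 \in [u_\eta^*, u_\mu]$, hence $u_0 \in S_\lambda^+ \subseteq {\rm int}\,C_+$ by the anisotropic regularity theory and maximum principle.

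For the second solution I would work with the full energy functional $\varphi_\lambda$ of $(P_\lambda)$ (or the truncation from below at $u_\eta^*$, leaving it untruncated from above so that superlinearity is available) and show $u_0$ is a local minimizer of it. The key point is that $u_0$ lies in the $C^1(\overline\Omega)$-interior of $[u_\eta^*, u_\mu]$ (indeed $u_\mu - u_0 \in D_+$ by the argument of Proposition \ref{prop8}, and $u_0 - u_\eta^* \in D_+$ similarly, using strict inequalities coming from $\mu > \lambda$ and $\eta > 0$), so $u_0$ is a local $C^1$-minimizer of $\varphi_\lambda$; by the equivalence of $C^1$- and $W^{1,p(z)}$-local minimizers (a standard fact in this setting, following from the anisotropic regularity theory) it is a local minimizer in the Sobolev topology. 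If $u_0$ is not a strict local minimizer we already have infinitely many critical points in any neighborhood and are done; otherwise there is $r>0$ with $\inf\{\varphi_\lambda(u) : \|u-u_0\| = r\} > \varphi_\lambda(u_0)$. Hypotheses $H_1(ii),(iii)$ give, via the standard Costa–Magalh\~aes / Mugnai-type argument replacing the $AR$-condition, that $\varphi_\lambda$ satisfies the $C$-condition and that $\varphi_\lambda(tv) \to -\infty$ as $t\to+\infty$ along some fixed direction $v \in {\rm int}\,C_+$. The mountain-pass theorem then yields $\hat u \in K_{\varphi_\lambda}$ with $\varphi_\lambda(\hat u) > \varphi_\lambda(u_0)$, so $\hat u \ne u_0$, and $\hat u \in S_\lambda^+ \subseteq {\rm int}\,C_+$.

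The main obstacle is verifying the compactness condition ($C$-condition) for $\varphi_\lambda$ without the Ambrosetti–Rabinowitz condition and, simultaneously, in the anisotropic non-homogeneous setting where $\lambda$ may be negative so the operator is not coercive. The standard trick is: given a $C$-sequence $\{u_n\}$, if $\|u_n\| \to \infty$ set $y_n = u_n/\|u_n\|$, pass to a weak limit $y$, and use hypothesis $H_1(iii)$ — the quasi-monotonicity of $e(z,x)=f(z,x)x - p_+F(z,x)$ — in place of the $AR$-inequality to control $\int_\Omega F(z,u_n)\,dz$ against $\tfrac1{p_+}\langle \varphi_\lambda'(u_n),u_n\rangle$; one shows $y \not\equiv 0$ leads (through $H_1(ii)$ and Fatou) to $\varphi_\lambda(u_n)/\|u_n\|^{p_+} \to -\infty$, contradicting boundedness, while $y \equiv 0$ is excluded by testing with $t_n u_n$ at the level where $\varphi_\lambda$ is maximized and deriving $\varphi_\lambda(t_n u_n) \to +\infty$ against an integral bound from $H_1(iii)$. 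Once boundedness of $\{u_n\}$ is in hand, the $(S)_+$-property of $A_p + A_q$ (Proposition \ref{prop3}), together with the compact embedding $W^{1,p(z)}(\Omega) \hookrightarrow L^{r(z)}(\Omega)$ from Proposition \ref{prop1}(b) and $H_1(i)$, gives strong convergence along a subsequence. All remaining steps are routine truncation bookkeeping.
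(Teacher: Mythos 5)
Your proposal is essentially sound and follows the same two--step scheme as the paper (trap a first solution in an ordered interval, show it is a local minimizer via the $C^1$ versus $W^{1,p(z)}$ result, then mountain pass with the $H_1(ii),(iii)$ substitute for the $AR$-condition), but the scaffolding differs. The paper takes $\eta<\lambda<\mu$ and gets $u_0\in S_\lambda^+$ together with the strict separations $u_\eta-u_0\in D_+$, $u_0-u_\mu\in D_+$ directly from Proposition \ref{prop8}, so $u_0\in{\rm int}_{C^1(\overline\Omega)}[u_\mu,u_\eta]$ costs nothing; the mountain pass is then run on the functional truncated from below at $u_\mu$, the solution at the \emph{larger} parameter $\mu>\lambda$ (which is a subsolution of $(P_\lambda)$). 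You instead build the interval from the auxiliary barrier $u_\eta^*$ of Propositions \ref{prop11}--\ref{prop12} below and a solution $u_\mu$ with $\mu<\lambda$ above (a supersolution, since $\lambda>\mu$), and re-prove existence of $u_0$ by minimization; this works, and your remark that only $u_\mu-u_0\in D_+$ is really needed (if the mountain-pass functional is truncated from below at $u_\eta^*$) is correct -- that separation follows from the Proposition \ref{prop8} computation with $H_1(v)$ and the strict gap $(\lambda-\mu)u_\mu^{p(z)-1}\ge c>0$. Note the slip ``$\mu>\lambda$'' in your text (you fixed $\mu<\lambda$, and it is $\lambda>\mu$ that gives strictness), and note that the claim $u_0-u_\eta^*\in D_+$ is not automatic: comparing $f(z,u_0)$ with $C_0(u_\eta^*)^{\tau(z)-1}-C_4(u_\eta^*)^{r(z)-1}$ needs \eqref{eq22} plus a constant large enough to make the auxiliary reaction nondecreasing, and strictness needs $\lambda<\lambda_*+\eta$ strictly -- fortunately you do not need it.

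One caveat is genuine and you should commit to the truncated option rather than the raw energy $\varphi_\lambda$. Since $f(z,x)=0$ for $x\le0$ by \eqref{eq1} and $\lambda$ may be $\le0$ (the operator is not coercive), testing a Cerami sequence of the untruncated $\varphi_\lambda$ with $-u_n^-$ gives $\rho_p(Du_n^-)+\rho_q(Du_n^-)+\lambda\rho_p(u_n^-)\le\varepsilon_n$, which does \emph{not} bound $\{u_n^-\}$ when $\lambda\le0$; the control of the negative parts in the paper's Claim (see \eqref{eq45}) comes precisely from the freezing of the reaction below the positive function $u_\mu$ together with the $\vartheta$-absorption with $\vartheta+\lambda>0$. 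With the functional truncated from below at $u_\eta^*$ (plus the absorption term, so that critical points still lie in $[u_\eta^*)\cap{\rm int}\,C_+$ and solve $(P_\lambda)$), your C-condition argument is exactly the paper's and the rest of the proof goes through.
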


\begin{proof}
  Let $\eta,\mu\in(\lambda_*,\infty)$ such that $\lambda_*<\eta<\lambda<\mu$. We know that $\eta,\mu\in \mathcal{L}^+$ (see Proposition \ref{prop6}). Moreover, on account of Proposition \ref{prop8} we can find $u_\eta\in S_\eta^+$, $u_0\in S_\lambda^+$ and $u_\mu\in S_\mu^+$ such that
\begin{eqnarray}\nonumber
% \nonumber % Remove numbering (before each equation)
  && u_\eta-u_0 \in D_+ \mbox{ and } u_0-u_\mu\in D_+, \\
  &\Rightarrow& u_0\in{\rm int}_{C^1(\overline{\Omega})}[u_\mu,u_\eta]. \label{eq33}
\end{eqnarray}
Let $\vartheta>-\lambda$ and consider the Carath\' eodory functions $\hat{g}(z,x)$ and $g(z,x)$ defined by
\begin{equation}\label{eq34}
  \hat{g}(z,x)=\left\{
                 \begin{array}{ll}
                   f(z,u_\mu(z))+\vartheta u_\mu(z)^{p(z)-1}, & \\
                   f(z,x)+\vartheta x^{p(z)-1}, &  \\
                   f(z,u_\eta(z))+\vartheta u_\eta(z)^{p(z)-1}, &
                 \end{array}
               \right.
\end{equation}
and
\begin{equation}\label{eq35}
  g(z,x)=\left\{
           \begin{array}{ll}
             f(z,u_\mu(z))+\vartheta u_\mu(z)^{p(z)-1}, & \hbox{ if }x<u_\mu(z) \\
             f(z,x)+\vartheta x^{p(z)-1}, & \hbox{ if }u_\mu(z)\leq x.
           \end{array}
         \right.
\end{equation}
We set $\hat{G}(z,x)=\displaystyle{\int_0^x \hat{g}(z,s)ds}$, $G(z,x)=\displaystyle{\int_0^x g(z,s)ds}$ and consider the $C^1$-functionals $\hat{\gamma}_\lambda, \gamma_\lambda: W^{1,p(z)}(\Omega)\to \RR$ defined by

\begin{eqnarray*}
% \nonumber % Remove numbering (before each equation)
  \hat{\gamma}_\lambda(u) &=& \int_\Omega \frac{1}{p(z)}|Du|^{p(z)}dz+\int_\Omega \frac{1}{q(z)}|Du|^{q(z)}dz+\int_\Omega \frac{\vartheta+\lambda}{p(z)}|u|^{p(z)}dz \\
   &-& \int_\Omega \hat{G}(z,u)dz,
\end{eqnarray*}

\begin{eqnarray*}
% \nonumber % Remove numbering (before each equation)
  \gamma_\lambda(u) &=& \int_\Omega \frac{1}{p(z)}|Du|^{p(z)}dz+\int_\Omega \frac{1}{q(z)}|Du|^{q(z)}dz+\int_\Omega \frac{\vartheta+\lambda}{p(z)}|u|^{p(z)}dz \\
   &-& \int_\Omega G(z,u)dz \mbox{ for all }u\in W^{1,p(z)}(\Omega).
\end{eqnarray*}
Using \eqref{eq34} and \eqref{eq35}, we show easily that
\begin{equation}\label{eq36}
  K_{\hat{\gamma}_\lambda}\subseteq [u_\mu,u_\eta]\cap{\rm int}\,C_+ \mbox{ and } K_{\gamma_\lambda}\subseteq[u_\mu)\cap{\rm int}\,C_+.
\end{equation}
It is clear from \eqref{eq34} and \eqref{eq35} that
\begin{equation}\label{eq37}
  \gamma_\lambda\Big|_{[u_\mu,u_\eta]}=\hat{\gamma}_\lambda\Big|_{[u_\mu,u_\eta]}.
\end{equation}
Then from \eqref{eq36} and \eqref{eq37}, we see that we may assume that
\begin{equation}\label{eq38}
  K_{\hat{\gamma}_\lambda}=\{u_0\}.
\end{equation}
Otherwise, we already have a second positive solution for problem $(P_\lambda)$ and so we are done. From \eqref{eq34} and since $\vartheta>-\lambda$, we see that $\hat{\gamma}(\cdot)$ is coercive. Also, it is sequentially weakly lower semicontinuous. So, $\hat{\gamma}_\lambda(\cdot)$ has a global minimizer on account of \eqref{eq38} this global minimizer is $u_0$.  From \eqref{eq33} and \eqref{eq37} it follows that
\begin{eqnarray}\nonumber
% \nonumber % Remove numbering (before each equation)
  && u_0 \mbox{ is a local $C^1(\overline{\Omega})$-minimizer of $\gamma_\lambda(\cdot)$, } \\
  &\Rightarrow& u_0 \mbox{ is a local $W^{1,p(z)}(\Omega)$-minimizer of $\gamma_\lambda(\cdot)$ } \label{eq39} \\ \nonumber
  && \mbox{ (see Gasinski \& Papageorgiou \cite{5Gas-Pap}). }
\end{eqnarray}
From \eqref{eq36} and \eqref{eq35}, we see that we may assume that $K_{\gamma_\lambda}$ is finite, otherwise we already have a sequence of distinct positive smooth solutions for $(P_\lambda)$ and so we are done. Then from \eqref{eq39} and Theorem 5.7.6 of Papageorgiou, R\u adulescu \& Repov\v s \cite[p.367]{9Pap-Rad-Rep}, we see that there exists $\rho\in(0,1)$ small such that
\begin{equation}\label{eq40}
  \gamma_\lambda(u_0)<\inf\left\{\gamma_\lambda(u):\: \|u-u_0\|=\rho\right\}=m_\lambda.
\end{equation}
Note that if $u\in{\rm int}\,C_+$, then on account of hypothesis $H_1(ii)$, we have
\begin{equation}\label{eq41}
  \gamma_\lambda(tu)\to-\infty \mbox{ as }t\to+\infty.
\end{equation}

{\bf Claim:} $\gamma_\lambda(\cdot)$ satisfies the $C$-condition.

Consider a sequence $\{u_n\}_{n\in\NN}\subseteq W^{1,p(z)}(\Omega)$ such that
\begin{eqnarray}
% \nonumber % Remove numbering (before each equation)
  && |\gamma_\lambda(u_n)|\leq C_5 \mbox{ for some } C_5>0, \mbox{ all }n\in\NN \label{eq42}\\
  && (1+\|u_n\|)\gamma'_\lambda(u_n)\to 0 \mbox{ in } W^{1,p(z)}(\Omega)^* \mbox{ as }n\to\infty. \label{eq43}
\end{eqnarray}
From \eqref{eq43}, we have
\begin{eqnarray}\nonumber
% \nonumber % Remove numbering (before each equation)
  \Big| \langle A_p(u_n),h\rangle+\langle A_q(u_n),h\rangle \!\!\!&+&\!\!\! \int_\Omega (\vartheta+\lambda)|u_n|^{p(z)-2}u_n hdz-\int_\Omega g(z,u_n)hdz\Big| \\
  &\leq& \frac{\varepsilon_n\|h\|}{1+\|u_n\|} \mbox{ for all }h\in W^{1,p(z)}(\Omega) \mbox{ and with } \varepsilon_n\to0^+. \label{eq44}
\end{eqnarray}
In \eqref{eq44} we choose $h=-u_n^-\in W^{1,p(z)}(\Omega)$. Then
\begin{eqnarray}\nonumber
% \nonumber % Remove numbering (before each equation)
   && \rho_p(Du_n^-)+\rho_q(Du_n^-)+[\vartheta+\lambda]\rho_q(u_n^-)\leq C_6\|u_n^-\| \\ \nonumber
  && \mbox{ for some $C_6>0$, all $n\in \NN$ (see \eqref{eq35}), } \\
  &\Rightarrow& \{u_n^-\}_{n\in\NN}\subseteq W^{1,p(z)}(\Omega) \mbox{ is bounded } \label{eq45}\\ \nonumber
    && \mbox{ (see Proposition \ref{prop2} and recall that $\vartheta>-\lambda$). }
\end{eqnarray}
To show that $\{u_n\}_{n\in\NN}\subseteq W^{1,p(z)}(\Omega)$ is bounded, we need to show that $\{u_n^+\}_{n\in\NN}\subseteq W^{1,p(z)}(\Omega)$ is bounded (see \eqref{eq45}). Arguing by contradiction, suppose that at least for a subsequence we have
\begin{equation}\label{eq46}
  \|u_n^+\|\to\infty
\end{equation}
Let $v_n=\frac{u_n^+}{\|u_n^+\|}$ for $n\in \NN$. Then $\|v_n\|=1$, $v_n\geq0$ for all $n\in\NN$. So, we may assume that
\begin{equation}\label{eq47}
  v_n\overset{w}{\to}v \mbox{ in }W^{1,p(z)}(\Omega),\ v_n\to v \mbox{ in }L^{r(z)}(\Omega),\ v\geq0.
\end{equation}
Let $\Omega_+=\{z\in \Omega:\: v(z)>0\}$. First we assume that $|\Omega_+|_N>0$ (that is, $v\not=0$). Then we have
\begin{eqnarray}\nonumber
% \nonumber % Remove numbering (before each equation)
  && u_n^+ \to +\infty \mbox{ for a.a. }z\in\Omega_+ \\ \nonumber
  &\Rightarrow& \frac{F(z,u_n^+(z))}{u_n^+(z)^{p_+}}\to \infty \mbox{ for a.a. }z\in \Omega_+ \\ \nonumber
  && \mbox{ (see hypothesis $H_1(ii)$), } \\
  &\Rightarrow& \int_{\Omega_+}\frac{F(z,u_n^+)}{\|u_n^+\|^{p_+}}dz\to+\infty \mbox{  (by Fatou's lemma). } \label{eq48}
\end{eqnarray}
Note that hypotheses $H_1(i),(ii)$ imply that
$$
-C_7\leq F(z,x) \mbox{ for a.a. }z\in\Omega, \mbox{ all }x\geq0, \mbox{ some }C_7>0.
$$
Hence we have
\begin{eqnarray}\nonumber
% \nonumber % Remove numbering (before each equation)
  \int_\Omega \frac{F(z,u_n^+)}{\|u_n^+\|^{p_+}}dz &=& \int_{\Omega_+}\frac{F(z,u_n^+)}{\|u_n^+\|^{p_+}}dz+\int_{\Omega\setminus \Omega_+} \frac{F(z,u_n^+)}{\|u_n^+\|^{p_+}}dz \\ \nonumber
  &\geq& \int_{\Omega_+} \frac{F(z,u_n^+)}{\|u_n^+\|^{p_+}}dz-\frac{C_7|\Omega|_N}{\|u_N^+\|^{p_+}}, \\
\Rightarrow  \int_\Omega \frac{F(z,u_n^+)}{\|u_n^+\|^{p_+}}dz \!\!\!&\to& \!\!\! +\infty \mbox{ (see \eqref{eq48} and \eqref{eq46}). } \label{eq49}
\end{eqnarray}

On the other hand, from \eqref{eq35}, \eqref{eq43} and \eqref{eq45}, we can say that

\begin{eqnarray}\nonumber
\begin{split}
  &-\frac{1}{q_-}\Big[
\int_\Omega\frac{1}{\|u_n^+\|^{p_+-p(z)}}|Dv_n|^{p(z)}dz+
\int_\Omega\frac{1}{\|u_n^+\|^{p_+-q(z)}}|Dv_n|^{q(z)}dz \\ \nonumber
  & \ \ + \int_\Omega\frac{\vartheta+\lambda}{\|u_n^+\|^{p_+-p(z)}}|v_n|^{p(z)}dz+
\int_\Omega \frac{F(z,u_n^+)}{\|u_n^+\|^{p_+}}dz\leq C_8 \\ \nonumber
  & \ \ \ \mbox{ for some $C_8>0$, all $n\in \NN$, }
\end{split}
\end{eqnarray}

\begin{eqnarray}\label{eq50}
  \Rightarrow \int_\Omega \frac{F(z,u_n^+)}{\|u_n^+\|^{p_+}}dz\leq C_9 \mbox{ for some $C_9>0$, all $n\in \NN$.}
\end{eqnarray}
Comparing \eqref{eq50} and \eqref{eq48}, we have a contradiction.

Next, we assume that $|\Omega_+|_N=0$, (that is, $v\equiv 0$). For all $n\in\NN$, let $t_n\in[0,1]$ be  such that
\begin{equation}\label{eq51}
  \gamma_\lambda(t_nu_n^+)=\max\left\{\gamma_\lambda(tu_n^+):\:0\leq t\leq 1\right\}.
\end{equation}
For $\xi>1$, let $y_n=\xi^{1/p_-}v_n$ for all $n\in\NN$. Then
$$
y_n\overset{w}{\to}0 \mbox{ in } W^{1,p(z)}(\Omega).
$$
If follows that
\begin{equation}\label{eq52}
  \int_\Omega \frac{\vartheta+\lambda}{p(z)}y_n^{p(z)}dz\to0, \int_\Omega G(z,y_n)dz\to0.
\end{equation}

On account of \eqref{eq46}, we can find $n_0\in\NN$ such that
\begin{equation}\label{eq53}
  \frac{\xi^{1/p_-}}{\|u_n^+\|}\in(0,1] \mbox{ for all }n\geq n_0.
\end{equation}
Then for $n\geq n_0$, we have
\begin{eqnarray} \nonumber
% \nonumber % Remove numbering (before each equation)
  \gamma_\lambda(t_n u_n^+) &=& \gamma_\lambda(y_n) \mbox{ (see \eqref{eq53}, \eqref{eq51}) } \\ \nonumber
  &=& \int_\Omega \frac{1}{p(z)}\xi^{p(z)/p_-}|Dv_n|^{p(z)}dz \\ \nonumber
  &+& \int_\Omega \frac{\vartheta+\lambda}{p(z)}\xi^{p(z)/p_-}v_n^{p(z)}dz-\int_\Omega G(z,y_n)dz \\ \nonumber
  &\geq& \frac{\xi}{p_+}\left[\rho_p(Dv_n)+(\vartheta+\lambda)\rho_p(v_n)\right]-
\int_\Omega G(z,y_n)dz \\ \nonumber
  && \mbox{ (recall that $\xi>1$), } \\
  \Rightarrow&& \gamma_\lambda(t_nu_n^+)\geq\frac{C_{10}\xi}{2p_+} \label{eq54} \\ \nonumber
  && \mbox{ for some $C_{10}>0$, all $n\geq n_1\geq n_0$ (see \eqref{eq52}). }
\end{eqnarray}
Since $\xi>1$ is arbitrary, from \eqref{eq54} we infer that
\begin{equation}\label{eq55}
  \gamma_\lambda(t_nu_n^+)\to+\infty \mbox{ as }n\to\infty.
\end{equation}
We have
\begin{eqnarray}
% \nonumber % Remove numbering (before each equation)
  && \gamma_\lambda(0)=0 \mbox{ and } \gamma_\lambda(u_n^+)\leq C_{11}  \label{eq56} \\ \nonumber
  && \mbox{ for some $C_{11}>0$, all $n\in\NN$ (see \eqref{eq35}, \eqref{eq42}).}
\end{eqnarray}
From \eqref{eq55} and \eqref{eq56}, it follows that we can find $n_2\in \NN$ such that
\begin{eqnarray}\nonumber
% \nonumber % Remove numbering (before each equation)
  && t_n\in(0,1) \mbox{ for all }n\geq n_2, \\ \nonumber
  &\Rightarrow& \frac{d}{dt}\gamma_\lambda(tu_n^+)\Big|_{t=t_1}=0 \mbox{ (see \eqref{eq51}), } \\
  &\Rightarrow& \langle \gamma'_\lambda(t_n u_n^+),t_n u_n^+\rangle=0 \mbox{ for all }n\geq n_2 \label{eq57} \\ \nonumber
  &&\mbox{ (by the chain rule) }
\end{eqnarray}
Then for $n\geq n_2$ we have
\begin{eqnarray}\nonumber
% \nonumber % Remove numbering (before each equation)
  && \gamma_\lambda(t_n u_n^+) \\ \nonumber
  &=& \gamma_\lambda(t_n u_n^+)- \frac{1}{p_+}\langle \gamma'_\lambda(t_n u_n^+), t_n u_n^+\rangle \mbox{ (see \eqref{eq57}) } \\ \nonumber
  &\leq& \int_\Omega \left[\frac{1}{p(z)}-\frac{1}{p_+}\right] |Du_n^+|^{p(z)}dz + \int_\Omega \left[\frac{1}{q(z)}-\frac{1}{p_+}\right]|Du_n^+|^{q(z)}dz \\ \nonumber
  &+& \frac{1}{p_+}\int_\Omega \left[g(z,t_nu_n^+)(t_n u_n^+)-p_+ G(z,t_n u_n^+)\right]dz \\ \nonumber
  && \mbox{ (since $t_n\in(0,1)$) } \\ \nonumber
  &\leq& \int_\Omega\left[\frac{1}{p(z)}-\frac{1}{p_+}\right]|Du_n^+|^{p(z)}+
\int_\Omega \left[\frac{1}{q(z)}-\frac{1}{p_+}\right] |Du_n^+|^{q(z)}dz \\ \nonumber
  &+& \frac{1}{p_+}\int_\Omega \left[g(z,u_n^+)u_n^+-p_+ G(z,u_n^+)\right]dz +C_{12} \\ \nonumber
  && \mbox{ for some $C_{12}>0$ (see $H_1(iii)$ and \eqref{eq35}) } \\ \nonumber
  &=& \gamma_\lambda(u_n^+)-\frac{1}{p_+}\langle \gamma'_\lambda(u_n^+),u_n^+\rangle +C_{12} \\
  &\leq& C_{13} \mbox{ for some $C_{13}>0$, all $n\in \NN$ (see \eqref{eq42}, \eqref{eq43}, \eqref{eq45}). } \label{eq58}
\end{eqnarray}
Comparing \eqref{eq58} and \eqref{eq55}, we have a contradiction. Therefore $\{u_n^+\}_{n\in\NN}\subseteq W^{1,p(z)}(\Omega)$ is bounded, hence
$$
\{u_n\}_{n\in \NN} \subseteq W^{1,p(z)}(\Omega) \mbox{ is bounded (see \eqref{eq45}). }
$$
We may assume that
\begin{equation}\label{eq59}
  u_n\overset{w}{\to}u \mbox{ in }W^{1,p(z)}(\Omega) \mbox{ and } u_n\to u \mbox{ in } L^{r(z)}(\Omega).
\end{equation}
In \eqref{eq44} we choose $h=u_n-u\in W^{1,p(z)}(\Omega)$, pass to the limit as $n\to\infty$ and use \eqref{eq59}. Then
\begin{eqnarray*}
% \nonumber % Remove numbering (before each equation)
  && \lim_{n\to\infty}\left[\langle A_p(u_n),u_n-u\rangle+\langle A_q(u_n), u_n-u\rangle\right]=0, \\
  &\Rightarrow& \limsup_{n\to\infty}\left[\langle A_p(u_n),u_n-u\rangle+\langle A_q(u),u_n-u\rangle\right]\leq 0 \\
  && \mbox{ (since $A_q(\cdot)$ is monotone), } \\
  &\Rightarrow& \limsup_{n\to\infty} \langle A_p(u_n),u_n-u\rangle \leq 0 \mbox{ (see \eqref{eq59}), } \\
  &\Rightarrow& u_n\to u \mbox{ in } W^{1,p(z)}(\Omega) \mbox{ (see Proposition \ref{prop3}). }
\end{eqnarray*}
Therefore $\gamma_\lambda(\cdot)$ satisfies the $C$-condition and we have proved the Claim.
Then \eqref{eq40}, \eqref{eq41} and the Claim permit the use of the mountain pass theorem. So, we can find $\hat{u}\in W^{1,p(z)}(\Omega)$ such that
\begin{equation}\label{eq60}
  \left\{
    \begin{array}{ll}
      \hat{u}\in K_{\gamma_\lambda}\subseteq[u_\mu)\cap{\rm int}\,C_+ \mbox{ (see \eqref{eq36}) } & \\
       m_\lambda\leq \gamma_\lambda(\hat{u}) \mbox{ (see \eqref{eq40}). }&
    \end{array}
  \right.
\end{equation}
From \eqref{eq60}, \eqref{eq35}, \eqref{eq40}, we infer that
$$
\hat{u}\in S_\lambda^+\subseteq {\rm int}\,C_+ \mbox{ and }\hat{u}\not=u_0\ (\mbox{for}\ \lambda\in (\lambda_*,+\infty)).
$$
This completes the proof.
\end{proof}

We have to determine what happens with the critical parameter $\lambda_*$. We show that $\lambda_*$ is also admissible and so $\mathcal{L}^+=[\lambda_*,+\infty)$.

\begin{prop}\label{prop14}
  If hypotheses $H_0$, $H'_1$ hold, then $\lambda_*\in \mathcal{L}^+$.
\end{prop}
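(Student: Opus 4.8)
The plan is to realize $\lambda_*$ as a limit of admissible parameters and to extract, from a \emph{carefully chosen} family of solutions, a positive solution of $(P_{\lambda_*})$. Fix a sequence $\lambda_n\downarrow\lambda_*$ (recall $\lambda_*>-\infty$ by Proposition \ref{prop9} and, under $H'_1$, $\lambda_*\geq0$ by Proposition \ref{prop10}); since $(\lambda_*,\infty)\subseteq\mathcal L^+$ by Proposition \ref{prop6}, for each $n$ pick $\mu_n\in(\lambda_*,\lambda_n)$ and $w_n\in S^+_{\mu_n}$. Running the construction in the proof of Proposition \ref{prop6}/Corollary \ref{cor7} with the pair $\mu_n<\lambda_n$ produces $u_n\in S^+_{\lambda_n}\subseteq{\rm int}\,C_+$ with $0\leq u_n\leq w_n$, obtained as the global minimizer of the associated truncated functional $\hat\varphi_{\lambda_n}$. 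That functional coincides on $[0,w_n]$ with the (untruncated) energy functional of $(P_{\lambda_n})$,
$$\varphi_{\lambda_n}(u)=\int_\Omega\frac{1}{p(z)}|Du|^{p(z)}dz+\int_\Omega\frac{1}{q(z)}|Du|^{q(z)}dz+\int_\Omega\frac{\lambda_n}{p(z)}|u|^{p(z)}dz-\int_\Omega F(z,u)dz,$$
and since $\hat\varphi_{\lambda_n}(u_n)<0=\hat\varphi_{\lambda_n}(0)$ we get the crucial uniform bound $\varphi_{\lambda_n}(u_n)<0$ for all $n$. This choice of $u_n$ is exactly the point of the argument: an arbitrary element of $S^+_{\lambda_n}$ need not have controlled energy, so this uniform ceiling is what will make the compactness argument go through.

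The main step is to prove that $\{u_n\}_{n\in\NN}\subseteq W^{1,p(z)}(\Omega)$ is bounded, which I would do exactly as in the $C$-condition part of the proof of Proposition \ref{prop13}, now exploiting that each $u_n$ is a critical point of $\varphi_{\lambda_n}$ with $\varphi_{\lambda_n}(u_n)$ bounded above. Arguing by contradiction, suppose $\|u_n\|\to\infty$, set $v_n=u_n/\|u_n\|$ and pass to $v_n\overset{w}{\to}v\geq0$ in $W^{1,p(z)}(\Omega)$, $v_n\to v$ in $L^{r(z)}(\Omega)$ and in $L^{p(z)}(\Omega)$. If $v\neq0$, on $\Omega_+=\{v>0\}$ we have $u_n\to+\infty$ a.e., so $H_1(ii)$ and Fatou's lemma give $\int_\Omega F(z,u_n)/\|u_n\|^{p_+}\to+\infty$; on the other hand $\langle\varphi'_{\lambda_n}(u_n),u_n\rangle=0$ bounds $\int_\Omega f(z,u_n)u_n$ by $C\|u_n\|^{p_+}$, and then $H_1(iii)$ (in the form $p_+F(z,x)\leq f(z,x)x+\mu(z)$, valid since $e(z,0)=0$) forces $\int_\Omega F(z,u_n)/\|u_n\|^{p_+}$ to stay bounded — a contradiction. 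If $v=0$, then $\rho_p(Dv_n)\to1$; for $\xi>1$ put $y_n=\xi^{1/p_-}v_n\overset{w}{\to}0$ and let $t_n\in[0,1]$ maximize $t\mapsto\varphi_{\lambda_n}(tu_n)$. Since $\int_\Omega F(z,y_n)dz\to0$ and $\lambda_n\geq0$, one gets $\varphi_{\lambda_n}(t_nu_n)\geq\varphi_{\lambda_n}(y_n)\to+\infty$, hence $t_n\in(0,1)$ for large $n$ (as $\varphi_{\lambda_n}(0)=0$ and $\varphi_{\lambda_n}(u_n)<0$); then from $\varphi_{\lambda_n}(t_nu_n)=\varphi_{\lambda_n}(t_nu_n)-\frac{1}{p_+}\langle\varphi'_{\lambda_n}(t_nu_n),t_nu_n\rangle$, using $t_n\in(0,1)$, $\lambda_n\geq0$ and $e(z,t_nu_n)\leq e(z,u_n)+\mu(z)$ from $H_1(iii)$, we obtain $\varphi_{\lambda_n}(t_nu_n)\leq\varphi_{\lambda_n}(u_n)+p_+^{-1}\|\mu\|_1<p_+^{-1}\|\mu\|_1$, again a contradiction. (This $v=0$ case is the only place where the sign restriction $\lambda_*\geq0$, i.e.\ hypotheses $H'_1$, is actually used.)

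Once $\{u_n\}$ is bounded, I would pass to a subsequence with $u_n\overset{w}{\to}u_*$ in $W^{1,p(z)}(\Omega)$ and $u_n\to u_*$ in $L^{r(z)}(\Omega)$; testing the weak form of $(P_{\lambda_n})$ with $h=u_n-u_*$ and using the monotonicity of $A_q$ yields $\limsup_n\langle A_p(u_n),u_n-u_*\rangle\leq0$, hence $u_n\to u_*$ strongly by the $(S)_+$-property (Proposition \ref{prop3}). Letting $n\to\infty$ in the weak formulation (with $\lambda_n\to\lambda_*$, $A_r$ continuous, $N_f$ continuous into $L^{r'(z)}(\Omega)$) shows $u_*\geq0$ solves $(P_{\lambda_*})$ weakly. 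To see $u_*\neq0$, fix any $\eta_0>0$; for large $n$, $\lambda_n\in(\lambda_*,\lambda_*+\eta_0]$, so Proposition \ref{prop12} gives $u^*_{\eta_0}\leq u_n$, whence $u^*_{\eta_0}\leq u_*$ with $u^*_{\eta_0}\in{\rm int}\,C_+$. The anisotropic regularity theory (\cite{16Win-Zach}, \cite{11Pap-Rad-Zhang}, \cite{3Fan}, \cite{15Tan-Fang}) and the anisotropic maximum principle (\cite{7Pap-Qin-Rad}) then give $u_*\in{\rm int}\,C_+$, so $u_*\in S^+_{\lambda_*}$ and $\lambda_*\in\mathcal L^+$.

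The hard part is the uniform boundedness of $\{u_n\}$: because $f$ is $(p_+-1)$-superlinear without the $AR$-condition, $\varphi_{\lambda_n}$ is not coercive and there is no a priori control on solutions of $(P_{\lambda_n})$ as $\lambda_n\to\lambda_*$. Selecting $u_n$ to be the minimizer-type solution of the Proposition \ref{prop6} construction supplies the uniform ceiling $\varphi_{\lambda_n}(u_n)<0$, and this, combined with the quasi-monotonicity hypothesis $H_1(iii)$ and the sign condition $\lambda_*\geq0$ from $H'_1$, is precisely what closes the contradiction argument in both the $v\neq0$ and $v=0$ cases.
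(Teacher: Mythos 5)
Your proposal is correct and follows essentially the same route as the paper: take $\lambda_n\downarrow\lambda_*$, select the minimizer-type solutions $u_n\in S^+_{\lambda_n}$ from the Proposition \ref{prop6}/Corollary \ref{cor7} construction so that $\varphi_{\lambda_n}(u_n)<0$, prove boundedness of $\{u_n\}$ by the contradiction argument of the Claim in Proposition \ref{prop13} (where $H'_1$, i.e.\ $\lambda_n\geq 0$, is indeed what replaces the coefficient $\vartheta+\lambda>0$), pass to the limit via the $(S)_+$-property, and use the lower bound $u^*_\eta$ from Proposition \ref{prop12} to guarantee $u_*\neq0$. Your handling of the $v\neq0$ case (critical-point identity plus $H_1(iii)$ at $x=0$ instead of the energy bound) is a harmless variant of the same estimate.
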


\begin{proof}
  Let $\{\lambda_n\}_{n\in\NN}\subseteq \mathcal{L}^+$ be such that $\lambda_n \downarrow \lambda_*$. For each $n\in\NN$, let $\eta\in (\lambda_*,\lambda_n)$. From Proposition \ref{prop6} we know that $\eta \in \mathcal{L}^+$ and so we can find $u_\eta\in S_\eta^+\subseteq{\rm int}\,C_+$. From Corollary \ref{cor7}, we know that we can find $u_n=u_{\lambda_n}\in S_{\lambda_n}^+\subseteq{\rm int}\,C_+$ with $u_\eta-u_n\in D_+$.

Consider the energy functional $\varphi_\lambda: W^{1,p(z)}(\Omega)\to\RR$ for problem $(P_\lambda)$ defined by
\begin{eqnarray*}
% \nonumber % Remove numbering (before each equation)
  \varphi_\lambda(u) &=& \int_\Omega \frac{1}{p(z)}|Du|^{p(z)}dz+\int_\Omega \frac{1}{q(z)}|Du|^{q(z)}dz+\int_\Omega \frac{\lambda}{p(z)}|u|^{p(z)}dz \\
  &-& \int_\Omega F(z,u)dz \mbox{ for all }u\in W^{1,p(z)}(\Omega).
\end{eqnarray*}
We know that $\varphi_\lambda\in C^1\left(W^{1,p(z)}(\Omega)\right)$ and from the first part of this proof (see also the proof of Proposition \ref{prop6}), we have
$$
\varphi_{\lambda_n}(u_n)<\varphi_{\lambda_n}(0)=0 \mbox{ for all }n\in \NN,
$$
\begin{eqnarray}\nonumber
% \nonumber % Remove numbering (before each equation)
  &\Rightarrow& \int_\Omega \frac{p_+}{p(z)}|Du_n|^{p(z)}dz+\int_\Omega \frac{p_+}{q(z)}|Du_n|^{q(z)}dz +\int_\Omega \frac{\lambda_n p_+}{p(z)}|u_n|^{p(z)}dz \\
  &-& \int_\Omega p_+F(z,u_n)dz\leq0 \mbox{ for all }n\in \NN. \label{eq61}
\end{eqnarray}

On the other hand, since $u_n\in S_n^+$ for all $n\in \NN$, we have
$$
\varphi'_{\lambda_n}(u_n)=0 \mbox{ for all }n\in \NN,
$$
\begin{eqnarray}
   &\Rightarrow& \rho_p(Du_n)+\rho_q(Du_n)+\lambda_n \rho_p(u_n)=\int_\Omega f(z,u_n)u_n dz \label{eq62} \\ \nonumber
  && \mbox{ for all }n\in \NN.
\end{eqnarray}
From \eqref{eq61} and \eqref{eq62}, as in the proof of Proposition \ref{prop13} (see the Claim), via a contradiction argument, we show that
$$
\{u_n\}_{n\in\NN} \subseteq W^{1,p(z)}(\Omega) \mbox{ is bounded. }
$$
We may assume that
\begin{equation}\label{eq63}
  u_n\overset{w}{\to}u_* \mbox{ in }W^{1,p(z)}(\Omega),\; u_n\to u_* \mbox{ in }L^{r(z)}(\Omega).
\end{equation}
We know that
$$
\langle \varphi'_{\lambda_n}(u_n),h\rangle=0 \mbox{ for all }h\in W^{1,p(z)}(\Omega), \mbox{ all }n\in\NN.
$$
Choosing $h=u_n-u_*\in W^{1,p(z)}(\Omega)$, passing to the limit  as $n\to \infty$ and using \eqref{eq63}, as before (see the proof of Proposition \ref{prop3}), exploiting the $(S)_+$-property of $A_p(\cdot)$ (see Proposition \ref{prop3}), we obtain
\begin{eqnarray}\nonumber
% \nonumber % Remove numbering (before each equation)
  && \limsup_{n\to\infty} \langle A_{p(z)}(u_n),u_n-u_*\rangle\leq 0,\\
  &\Rightarrow& u_n\to u_* \mbox{ in } W^{1,p(z)}(\Omega). \label{eq64}
\end{eqnarray}
Let $\mu>\lambda_n$ for all $n\in \NN$. Then $\mu\in \mathcal{L}^+$ and using Proposition \ref{prop12}, we can find $u_\mu^*\in{\rm int}\,C_+$ such that
\begin{eqnarray}\nonumber
% \nonumber % Remove numbering (before each equation)
  && u_\mu^*\leq u_n \mbox{ for all }n\in \NN, \\
  &\Rightarrow& u_\mu^*\leq u_*. \label{eq65}
\end{eqnarray}
From \eqref{eq64} it follows that
\begin{eqnarray*}
% \nonumber % Remove numbering (before each equation)
  && \langle \varphi'_\lambda(u_*),h\rangle=0 \mbox{ for all }h\in W^{1,p(z)}(\Omega), \\
  &\Rightarrow& u_*\in S_\lambda^+ \mbox{ and so }\lambda_*\in \mathcal{L}^+ \mbox{ (see \eqref{eq65}). }
\end{eqnarray*}
The proof is now complete.
\end{proof}

So, we have proved that
$$
\mathcal{L}=[\lambda_*,\infty).
$$
Summarizing, we can state the following global (with respect to the parameter $\lambda\in\RR$) multiplicity theorem for problem $(P_\lambda)$ (a bifurcation-type theorem).
\begin{thm}\label{th15}
  If hypotheses $H_0$, $H_1$ hold, then there exists $\lambda_*\in\RR$ such that
\begin{itemize}
  \item[(a)] for all $\lambda>\lambda_*$, problem $(P_\lambda)$ has at least two positive solutions $u_0,\hat{u}\in{\rm int}\,C_+$, $u_0\not=\hat{u}$;
  \item[(b)] for $\lambda=\lambda_*$, problem $(P_\lambda)$ has at least one positive solution $u_*\in{\rm int}\,C_+$;
  \item[(c)] if $\lambda<\lambda_*$, problem $(P_\lambda)$ has no positive solutions.
\end{itemize}
\end{thm}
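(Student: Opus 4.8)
The statement is a synthesis of what has already been proved, so the plan is to read off each clause from an earlier result. First I would note that $\lambda_*:=\inf\mathcal{L}^+$ is a well-defined real number: $\mathcal{L}^+\neq\emptyset$ by Proposition~\ref{prop5}, so $\lambda_*<+\infty$, and $\lambda_*\geq-\hat{C}>-\infty$ by Proposition~\ref{prop9}; hence $\lambda_*\in\RR$, which is the $\lambda_*$ whose existence the theorem asserts.

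Clause (a) is then immediate from Proposition~\ref{prop13}: for $\lambda>\lambda_*$ it furnishes two distinct positive solutions $u_0\neq\hat{u}$, while Proposition~\ref{prop5} places $S_\lambda^+$ inside ${\rm int}\,C_+$, so $u_0,\hat{u}\in{\rm int}\,C_+$. Clause (c) is just the definition of the infimum: if $\lambda<\lambda_*=\inf\mathcal{L}^+$ then $\lambda\notin\mathcal{L}^+$, i.e.\ $(P_\lambda)$ has no positive solution; Proposition~\ref{prop6} moreover shows $\mathcal{L}^+$ is an upper half-line, so together with (b) the final picture is $\mathcal{L}^+=[\lambda_*,\infty)$.

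Clause (b), the admissibility of the critical value itself, is the only part carrying genuine content, and it is exactly Proposition~\ref{prop14}. The argument there takes a sequence $\lambda_n\downarrow\lambda_*$ in $\mathcal{L}^+$ and corresponding solutions $u_n\in S_{\lambda_n}^+$, uses the energy inequality $\varphi_{\lambda_n}(u_n)<0$ together with hypothesis $H_1(iii)$ and the contradiction scheme of the Claim in Proposition~\ref{prop13} to show that $\{u_n\}$ is bounded in $W^{1,p(z)}(\Omega)$, passes to a weak limit $u_*$ and upgrades to strong convergence via the $(S)_+$-property of $A_p+A_q$ (Proposition~\ref{prop3}), and finally uses the uniform lower bound $u_\mu^*\leq u_n$ supplied by Propositions~\ref{prop11}--\ref{prop12} to guarantee $u_*\neq0$, so that $u_*\in S_{\lambda_*}^+\subseteq{\rm int}\,C_+$.

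I expect the only real obstacle to be this non-degeneracy at the endpoint: without the extremal-solution bound the weak limit $u_*$ could collapse to $0$, and excluding that is precisely what the auxiliary problem~\eqref{eq23} and its unique solution $u_\eta^*$ are designed to achieve; everything else in the theorem is bookkeeping among Propositions~\ref{prop5}, \ref{prop6}, \ref{prop9}, \ref{prop13} and \ref{prop14}.
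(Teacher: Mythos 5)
Your proposal is correct and matches the paper's own (implicit) proof exactly: Theorem \ref{th15} is stated as a summary of Propositions \ref{prop5}, \ref{prop6}, \ref{prop9}, \ref{prop13} and \ref{prop14} together with the definition $\lambda_*=\inf\mathcal{L}^+$, which is precisely your decomposition, including the role of the auxiliary solution $u^*_\eta$ from Propositions \ref{prop11}--\ref{prop12} in keeping the limit $u_*$ away from $0$ at the endpoint. The only caveat, inherited from the paper itself, is that Proposition \ref{prop14} is formally stated under the stronger hypotheses $H_1'$ while the theorem is stated under $H_1$; your reading of clause (b) as ``exactly Proposition \ref{prop14}'' mirrors the paper's own usage.
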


\section{Minimal positive solution}
In this section we show that for every $\lambda\in \mathcal{L}^+=[\lambda_*,\infty)$, problem $(P_\lambda)$ has a smallest positive solution.
\begin{prop}\label{prop16}
If hypotheses $H_0$, $H_1$ hold and $\lambda\in \mathcal{L}^+=[\lambda_*,+\infty)$, then  problem $(P_\lambda)$ has a smallest positive solution $\hat{u}_\lambda^*$ (that is, $\hat{u}_\lambda^*\in S_\lambda^+$ and $\hat{u}_\lambda^*\leq u$ for all $u\in S_\lambda^+$).
\end{prop}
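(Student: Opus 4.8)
The plan is to show that $S_\lambda^+$ is downward directed in the pointwise order, then to extract from it a decreasing sequence whose infimum equals $\inf S_\lambda^+$, and finally to pass to the limit---using the $(S)_+$-property of $A_p$ and the a priori lower bound $u_\eta^*$---to conclude that this infimum is itself a positive solution, which is then automatically the smallest.

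\emph{Downward directedness.} Given $u_1,u_2\in S_\lambda^+\subseteq{\rm int}\,C_+$, I would set $m=\min\{u_1,u_2\}\in{\rm int}\,C_+$, fix $\vartheta>-\lambda$, and mimic the constructions in \eqref{eq15}, \eqref{eq24}, \eqref{eq29}: introduce the truncation
$$
\tilde g(z,x)=\begin{cases} f(z,x^+)+\vartheta(x^+)^{p(z)-1}, & x\le m(z),\\ f(z,m(z))+\vartheta\, m(z)^{p(z)-1}, & m(z)<x,\end{cases}
$$
with primitive $\tilde G(z,x)=\int_0^x\tilde g(z,s)\,ds$, and the coercive, sequentially weakly lower semicontinuous $C^1$-functional
$$
\tilde\gamma(u)=\int_\Omega\tfrac1{p(z)}|Du|^{p(z)}dz+\int_\Omega\tfrac1{q(z)}|Du|^{q(z)}dz+\int_\Omega\tfrac{\vartheta+\lambda}{p(z)}|u|^{p(z)}dz-\int_\Omega\tilde G(z,u)\,dz.
$$
Its global minimizer $u_0$ (Weierstrass--Tonelli) is nonzero because $\tilde\gamma(tv)<0$ for $v\in{\rm int}\,C_+$ and $t\in(0,1)$ small, exactly as in the proof of Proposition~\ref{prop5} (this is where $H_1(iv)$ and $\tau_+<q_-$ enter). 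Testing $\tilde\gamma'(u_0)=0$ with $-u_0^-$ and with $(u_0-m)^+$ and invoking the monotonicity of $A_q$ together with Proposition~\ref{prop3}, as in Propositions~\ref{prop6} and \ref{prop12}, gives $0\le u_0\le m$; hence $u_0\in S_\lambda^+$ and $u_0\le\min\{u_1,u_2\}$. Thus $S_\lambda^+$ is downward directed.

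\emph{Extraction and passage to the limit.} By the standard selection lemma for downward directed sets (see Papageorgiou, R\u adulescu \& Repov\v s \cite{9Pap-Rad-Rep}) there is $\{u_n\}_{n\in\NN}\subseteq S_\lambda^+$ with $u_n$ decreasing and $\inf_n u_n=\inf S_\lambda^+=:\hat u_\lambda^*$ pointwise a.e. Since $0\le u_n\le u_1$, the sequence is bounded in $W^{1,p(z)}(\Omega)$ (Proposition~\ref{prop2}), so along a subsequence $u_n\overset{w}{\to}\hat u_\lambda^*$ in $W^{1,p(z)}(\Omega)$ and $u_n\to\hat u_\lambda^*$ in $L^{r(z)}(\Omega)$ (Proposition~\ref{prop1}). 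Each $u_n$ satisfies the weak formulation of $(P_\lambda)$ (cf.\ \eqref{eq21}); testing it with $h=u_n-\hat u_\lambda^*$, using the $L^{r(z)}$-convergence and $H_1(i)$ on the reaction term and the monotonicity bound $\langle A_q(u_n),u_n-\hat u_\lambda^*\rangle\ge\langle A_q(\hat u_\lambda^*),u_n-\hat u_\lambda^*\rangle\to0$, I get $\limsup_n\langle A_p(u_n),u_n-\hat u_\lambda^*\rangle\le0$, whence $u_n\to\hat u_\lambda^*$ in $W^{1,p(z)}(\Omega)$ by the $(S)_+$-property (Proposition~\ref{prop3}). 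Letting $n\to\infty$ in the weak formulation shows $\hat u_\lambda^*$ solves $(P_\lambda)$.

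\emph{Nondegeneracy and conclusion.} To rule out $\hat u_\lambda^*=0$, choose $\eta>0$ with $\lambda\le\lambda_*+\eta=\hat\lambda_\eta$; Proposition~\ref{prop12} (whose proof applies verbatim at $\lambda=\lambda_*$ as well, since $\lambda_*\in\mathcal{L}^+$ by Proposition~\ref{prop14}) gives $u_\eta^*\le u$ for all $u\in S_\lambda^+$, hence $0<u_\eta^*\le u_n$ for all $n$ and therefore $0<u_\eta^*\le\hat u_\lambda^*$. Thus $\hat u_\lambda^*\in S_\lambda^+$, and the anisotropic regularity theory (\cite{3Fan},\cite{15Tan-Fang}) and maximum principle (\cite{7Pap-Qin-Rad}) place it in ${\rm int}\,C_+$; being the pointwise infimum of $S_\lambda^+$, it is the smallest positive solution. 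The main obstacle is the downward-directedness step---setting up the truncated problem so its minimizer is genuinely positive and trapping it in $[0,\min\{u_1,u_2\}]$---with a secondary delicate point being the nondegeneracy at the endpoint $\lambda=\lambda_*$, handled via the uniform lower bound $u_\eta^*$ from \eqref{eq23}.
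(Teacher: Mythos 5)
Your overall strategy is the same as the paper's: downward directedness of $S_\lambda^+$, a decreasing sequence realizing $\inf S_\lambda^+$, boundedness plus the $(S)_+$-property of $A_p$ to pass to the limit, and the uniform lower bound $u_\eta^*$ from Proposition \ref{prop12} to exclude the zero limit (your remark that the argument of Proposition \ref{prop12} also covers $\lambda=\lambda_*$ is correct and in fact more explicit than the paper, which simply invokes it). However, two of your justifications do not hold as written. First, the boundedness of $\{u_n\}_{n\in\NN}$ in $W^{1,p(z)}(\Omega)$ does not follow from $0\leq u_n\leq u_1$ via Proposition \ref{prop2}: the pointwise bound controls $\rho_p(u_n)$ but says nothing about $\rho_p(Du_n)$. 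You must test the weak formulation \eqref{eq21} with $h=u_n$ and use $0\leq u_n\leq u_1$ together with $H_1(i)$ to bound $\int_\Omega f(z,u_n)u_n\,dz$, which is exactly how the paper obtains the gradient bound; the fix is immediate, but the step as stated is unjustified.

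Second, and more seriously, your self-contained proof of downward directedness has a gap. The function $m=\min\{u_1,u_2\}$ is only Lipschitz, so it is not in ${\rm int}\,C_+$ (a minor point), and, crucially, it is not a solution of $(P_\lambda)$. In Propositions \ref{prop6} and \ref{prop12} the upper obstacle in the truncation is itself a solution, so testing with $(u_0-m)^+$ lets you replace the right-hand side by the obstacle's own equation and then compare via monotonicity of $A_p$, $A_q$. Here that substitution is not available: to conclude $u_0\leq m$ you need $m$ to be a weak supersolution, i.e.\ the lattice property that the minimum of two solutions (or supersolutions) of this anisotropic equation is again a supersolution. That is precisely the nontrivial content of the result the paper imports by citation (Papageorgiou, R\u adulescu \& Repov\v s \cite{10Pap-Rad-Rep}, Proposition 7, giving that $S_\lambda^+$ is downward directed). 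So either cite that result, as the paper does, or supply the supersolution lemma for $\min\{u_1,u_2\}$; the phrase ``as in Propositions \ref{prop6} and \ref{prop12}'' does not close this step. With these two repairs your argument coincides with the paper's proof.
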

\begin{proof}
  From Papageorgiou, R\u adulescu \& Repov\v s \cite[Proposition 7]{10Pap-Rad-Rep}  we know that $S_\lambda^+$ is downward directed (that is, if $u_1,u_2\in S_\lambda^+$, then we can find $u\in S_\lambda^+$ such that $u\leq u_1$, $u\leq u_2$). Invoking Lemma 3.10 of Hu \& Papageorgiou \cite[p. 178]{6Hu-Pap}, we can find $\{u_n\}_{n\in \NN}\subseteq S_\lambda^+$ decreasing such that
$$
\inf S_\lambda^+=\inf_{n\in\NN} u_n.
$$
We have that
\begin{equation}\label{eq65'}
  \langle \varphi'_\lambda(u_n),h\rangle=0 \mbox{ for all }h\in W^{1,p(z)}(\Omega), \mbox{ all }n\in\NN.
\end{equation}
Choosing $h=u_n\in W^{1,p(z)}(\Omega)$, we obtain
\begin{eqnarray*}
% \nonumber % Remove numbering (before each equation)
  \rho_p(Du_n)+\rho_q(Du_n)+\lambda\rho_p(u_n) &=& \int_\Omega f(z,u_n)u_n dz \\
  &\leq& \int_\Omega |f(z,u_n)| u_1dz \\
  && \mbox{ (since $0\leq u_n\leq u_1$ for all $n\in \NN$) } \\
  &\leq& C_{14}\mbox{ for some } C_{14}>0, \mbox{ all }n\in\NN \\
  && \mbox{ (see hypothesis $H_1(i)$) } \\
  \Rightarrow\{u_n\}_{n\in\NN}\subseteq W^{1,p(z)}(\Omega)\!\!\! &&\!\!\!\!\!\!\! \mbox{  is bounded (see Proposition \ref{prop2}). }
\end{eqnarray*}
We may assume that
\begin{equation}\label{eq66}
  u_n\overset{w}{\to}\hat{u}_\lambda^* \mbox{ in } W^{1,p(z)}(\Omega),\ u_n\to \hat{u}_\lambda^* \mbox{ in } L^{r(z)}(\Omega).
\end{equation}
Choosing $h=u_n-\hat{u}_\lambda^*\in W^{1,p(z)}(\Omega)$ in \eqref{eq65'}, passing to the limit as $n\to\infty$ and using \eqref{eq66} and the $(S)_+$-property of $A_p(\cdot)$ we obtain
\begin{eqnarray} \nonumber
% \nonumber % Remove numbering (before each equation)
  && u_n\to \hat{u}_\lambda^* \mbox{ in } W^{1,p(z)}(\Omega), \\
  &\Rightarrow& \langle \varphi'(\hat{u}_\lambda^*),h\rangle=0 \mbox{ for all }h\in W^{1,p(z)}(\Omega) \mbox{ (see \eqref{eq65'}). } \label{eq67}
\end{eqnarray}
Also, if $\mu>\lambda$, then we have
\begin{equation}\label{eq68}
  u_\mu^*\leq \hat{u}_\lambda^* \mbox{ (see Proposition \ref{prop12}). }
\end{equation}
From \eqref{eq67} and \eqref{eq68}, we infer that
$$
\hat{u}_\lambda^*\in S_\lambda^+\subseteq{\rm int}\,C_+,\ \hat{u}_\lambda^*={\rm inf}S_\lambda^+.
$$
This completes the proof.
\end{proof}

\section{Nodal solutions}
In this section we prove the existence of a nodal solution (sign-changing solution) for problem $(P_\lambda)$.

If the conditions on $f(z,\cdot)$ are bilateral (that is, they are valid for all $x\in \RR$ and not only for $x\geq0$ as in $H_1$), then we can have similar results for the negative solutions of $(P_\lambda)$. So, now we impose the following conditions of $f(z,\cdot)$:
\smallskip
$H''_1$: $f:\Omega\times\RR\to\RR$ is a Carath\' eodory function such that $f(z,0)=0$ for a.a. $z\in \Omega$ and

\begin{itemize}
  \item[(i)] $|f(z,x)|\leq a(z)\left[1+|x|^{r(z)-1}\right]$ for a.a. $z\in\Omega$, all $x\in \RR$ with $a\in L^\infty(\Omega)$, $r\in C(\overline{\Omega})$, $p_+<r(z)<p^*(z)$ for all $z\in \overline{\Omega}$;
  \item[(ii)] if $F(z,x)=\displaystyle{\int_0^x f(z,s)ds}$, then $\displaystyle{\lim_{x\to\pm\infty}\frac{F(z,x)}{x^{p_+}}=+\infty}$ uniformly for a.a. $z\in\Omega$;
  \item[(iii)] if $e(z,x)=f(z,x)x-p_+F(z,x)$, then there exists $\mu\in L^1(\Omega)$ such that
$$
e(z,x)\leq e(z,y)+\mu(z)
$$
for a.a. $z\in\Omega$, all $0\leq x \leq y$ and $y\leq x\leq 0$;
  \item[(iv)] there exist $\tau\in C(\overline{\Omega})$ and $C_0,\delta_0,\hat{C}>0$ such
\begin{eqnarray*}
% \nonumber % Remove numbering (before each equation)
  && 1<\tau_+<q_-, \\
  && C_0|x|^{\tau(z)}\leq f(z,x)x \mbox{ for a.a. $z\in\Omega$, all $|x|\leq \delta_0$, } \\
  && -\hat{C}|x|^{p(z)}\leq f(z,x)x \mbox{ for a.a. $z\in \Omega$, all $x\in \RR$; }
\end{eqnarray*}
  \item[(v)] for every $\rho>0$, there exists $\hat{\xi}_\rho>0$ such that for a.a. $z\in\Omega$ the function
$$
x\mapsto f(z,x)+\hat{\xi}_\rho|x|^{p(z)-2}x
$$
is nondecreasing on $[-\rho,\rho]$.
\end{itemize}

Let $\mathcal{L}^-$ be the set of admissible parameters for negative solutions and let $S_\lambda^-$ be the set of negative solutions. Then as in Section 3, we can establish the existence of a critical parameter value $\lambda^*>-\infty$ such that
$$
\mathcal{L}^-=[\lambda^*,+\infty) \mbox{ and } \emptyset\not=S_\lambda^-\subseteq-{\rm int}\,C_+ \mbox{ for all }\lambda\in \mathcal{L}^-.
$$

We have a global multiplicity result for negative solutions (see Theorem \ref{th15}). Moreover, for every $\lambda\in \mathcal{L}^-=[\lambda_*,+\infty)$ there exists a maximal negative solution $\hat{v}_\lambda^*\in S_\lambda^-\subseteq{\rm int}\,C_+$ (that is, $\hat{v}_\lambda^*\leq v$ for all $v\in S_\lambda^-$).

We set $\tilde{\lambda}_0=\max\{\lambda_*,\lambda^*\}$. For every $\lambda\geq \tilde{\lambda}_0$ the problem has extremal constant sign solutions
$$
\hat{u}_\lambda^*\in S_\lambda^*\subseteq{\rm int}\,C_+,\ \hat{v}_\lambda^*\subseteq-{\rm int}\,C_+.
$$
Let $\lambda\geq\tilde{\lambda}_0$ and $\vartheta>-\lambda$. We introduce the Carath\' eodory function $\hat{k}(z,x)$ defined by
\begin{equation}\label{eq69}
  \hat{k}(z,x)=\left\{
                 \begin{array}{ll}
                   f(z,\hat{v}_\lambda^*(z))+
\vartheta|\hat{v}_\lambda^*(z)|^{p(z)-2}\hat{v}_\lambda^*(z), & \hbox{ if } x<\hat{v}_\lambda^*(z) \\
                   f(z,x)+\vartheta|x|^{p(z)-2}x, & \hbox{ if } \hat{v}_\lambda^*(z)\leq x\leq \hat{u}_\lambda^*(z) \\
                   f(z,\hat{u}_\lambda^*(z))+
\vartheta\hat{u}_\lambda^*(z)^{p(z)-1}, & \hbox{ if } \hat{u}_\lambda^*(z)<x.
                 \end{array}
               \right.
\end{equation}
We also consider the positive and negative truncations of $\hat{k}(z,\cdot)$, namely the Carath\' eodory functions
\begin{equation}\label{eq70}
  \hat{k}_\pm(z,x)=\hat{k}(z,\pm x^{\pm}).
\end{equation}
We set
$$
\hat{K}(z,x)=\int_0^x \hat{k}(z,s)ds \mbox{ and } \hat{K}_\pm(z,x)=\int_0^x \hat{k}_\pm(z,s)ds
$$
and introduce the $C^1$-functionals $\hat{w}_\lambda,\hat{w}_\lambda^\pm: W^{1,p(z)}(\Omega)\to\RR$ defined by
\begin{eqnarray*}
% \nonumber % Remove numbering (before each equation)
  \hat{w}_\lambda(u) &=& \int_\Omega \frac{1}{p(z)}|Du|^{p(z)}dz+\int_\Omega \frac{1}{q(z)}|Du|^{q(z)}dz+\int_\Omega \frac{\vartheta+\lambda}{p(z)}|u|^{p(z)}dz \\
  &-& \int_\Omega \hat{K}(z,u)dz \\
  \hat{w}_\lambda^\pm(u) &=& \int_\Omega \frac{1}{p(z)}|Du|^{p(z)}dz+\int_\Omega \frac{1}{q(z)}|Du|^{q(z)}dz+\int_\Omega \frac{\vartheta+\lambda}{p(z)}|u|^{p(z)}dz \\
  &-& \int_\Omega \hat{K}_\pm(z,u)dz \mbox{ for all }u\in W^{1,p(z)}(\Omega).
\end{eqnarray*}
Using \eqref{eq69} and \eqref{eq70}, we can show easily that
\begin{eqnarray*}
% \nonumber % Remove numbering (before each equation)
  && K_{\hat{w}_\lambda}\subseteq[\hat{v}_\lambda^*,\hat{u}_\lambda^*]\cap C^1(\overline{\Omega}), \\
  && K_{\hat{w}_\lambda^+}\subseteq [0,\hat{u}_\lambda^*]\cap C_+, \; K_{\hat{w}_\lambda^-}\subseteq[\hat{v}_\lambda^*,0]\cap(-C_+).
\end{eqnarray*}
The extremality of $\hat{u}_\lambda^*$, $\hat{v}_\lambda^*$ implies that
\begin{equation}\label{eq71}
  K_{\hat{w}_\lambda}\subseteq[\hat{v}_\lambda^*,\hat{u}_\lambda^*]\cap C^1(\overline{\Omega}),\; K_{\hat{w}_\lambda^+}=\{0,\hat{u}_\lambda^*\},
\;K_{\hat{w}_\lambda^-}=\{0,\hat{v}_\lambda^*\}.
\end{equation}
Working with these functionals, we produce a nodal (sign-changing) solution.
\begin{prop}\label{prop17}
  If hypotheses $H_0$, $H''_1$ hold and $\lambda\geq \tilde{\lambda}_0$, then problem $(P_\lambda)$ admits a nodal solution
$$
y_0\in[\hat{v}_\lambda^*,\hat{u}_\lambda^*]\cap C^1(\overline{\Omega}).
$$
\end{prop}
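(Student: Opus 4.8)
The plan is to run the standard truncation-plus-mountain-pass scheme around the extremal constant sign solutions $\hat{u}_\lambda^*\in{\rm int}\,C_+$ and $\hat{v}_\lambda^*\in-{\rm int}\,C_+$, using the functionals $\hat{w}_\lambda,\hat{w}_\lambda^\pm$ and the inclusions \eqref{eq71}.

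First I would show that $\hat{u}_\lambda^*$ and $\hat{v}_\lambda^*$ are local minimizers of $\hat{w}_\lambda$. Since the truncated reactions $\hat{k}_\pm(z,\cdot)$ are bounded uniformly in $z$ and $\vartheta+\lambda>0$, the functionals $\hat{w}_\lambda^\pm$ are coercive and, by Proposition~\ref{prop1}, sequentially weakly lower semicontinuous, so each possesses a global minimizer. Using hypothesis $H''_1(iv)$ (the concave term $C_0|x|^{\tau(z)}$ with $\tau_+<q_-$), for $v\in{\rm int}\,C_+$ and $t>0$ small one gets $\hat{w}_\lambda^+(tv)<0=\hat w_\lambda^+(0)$, so the global minimizer of $\hat{w}_\lambda^+$ is nonzero; by \eqref{eq71} ($K_{\hat{w}_\lambda^+}=\{0,\hat{u}_\lambda^*\}$) it must equal $\hat{u}_\lambda^*$, and symmetrically the global minimizer of $\hat{w}_\lambda^-$ is $\hat{v}_\lambda^*$. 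Since $\hat{u}_\lambda^*\in{\rm int}\,C_+$ and $\hat{w}_\lambda$ coincides with $\hat{w}_\lambda^+$ on $C_+$, $\hat{u}_\lambda^*$ is a local $C^1(\overline{\Omega})$-minimizer of $\hat{w}_\lambda$; by the equivalence of $C^1$- and $W^{1,p(z)}(\Omega)$-local minimizers for this class of anisotropic $(p,q)$-functionals (Gasinski \& Papageorgiou \cite{5Gas-Pap}), $\hat{u}_\lambda^*$ is a local $W^{1,p(z)}(\Omega)$-minimizer of $\hat{w}_\lambda$, and likewise for $\hat{v}_\lambda^*$.

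Next I would dispose of the degenerate case: if $K_{\hat{w}_\lambda}$ is infinite, then by \eqref{eq71} it contains infinitely many solutions of $(P_\lambda)$ lying in $[\hat{v}_\lambda^*,\hat{u}_\lambda^*]$ (on this order interval $\hat{k}(z,x)=f(z,x)+\vartheta|x|^{p(z)-2}x$, so $\hat{w}_\lambda'(u)=0$ there is exactly the equation $(P_\lambda)$), and the extremality of $\hat{u}_\lambda^*,\hat{v}_\lambda^*$ prevents any of them except $0,\hat{u}_\lambda^*,\hat{v}_\lambda^*$ from having constant sign, so a nodal solution already exists and we are done. Hence I may assume $K_{\hat{w}_\lambda}$ is finite, in particular $\hat{u}_\lambda^*,\hat{v}_\lambda^*$ are isolated critical points. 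Because its reaction term is bounded, $\hat{w}_\lambda$ is coercive and therefore satisfies the $C$-condition, so, assuming without loss of generality $\hat{w}_\lambda(\hat{v}_\lambda^*)\le\hat{w}_\lambda(\hat{u}_\lambda^*)$, the mountain pass theorem applied to the pair of isolated local minimizers $\hat{v}_\lambda^*,\hat{u}_\lambda^*$ (see \cite{9Pap-Rad-Rep}) yields $y_0\in K_{\hat{w}_\lambda}$ with $\hat{w}_\lambda(y_0)>\hat{w}_\lambda(\hat{u}_\lambda^*)\ge\hat{w}_\lambda(\hat{v}_\lambda^*)$ (hence $y_0\ne\hat{u}_\lambda^*,\hat{v}_\lambda^*$) and $C_1(\hat{w}_\lambda,y_0)\ne0$.

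Finally I would check that $y_0$ is the desired nodal solution. By \eqref{eq71}, $y_0\in[\hat{v}_\lambda^*,\hat{u}_\lambda^*]\cap C^1(\overline{\Omega})$ and, as noted above, $y_0$ solves $(P_\lambda)$. To see $y_0\ne0$ I would compute the critical groups of $\hat{w}_\lambda$ at the origin: near $0$ the truncation in \eqref{eq69} is inactive and $H''_1(iv)$ forces a concave lower bound on the reaction primitive, so — precisely because $\tau_+<q_-<p_-$ — the by now standard argument gives $C_k(\hat{w}_\lambda,0)=0$ for all $k\in\NN_0$ (see \cite{9Pap-Rad-Rep}); since $C_1(\hat{w}_\lambda,y_0)\ne0=C_1(\hat{w}_\lambda,0)$ we conclude $y_0\ne0$. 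Then $y_0\notin\{0,\hat{u}_\lambda^*,\hat{v}_\lambda^*\}$ and $y_0$ cannot be of constant sign: $y_0\ge0$ would give $y_0\in S_\lambda^+$ with $y_0\le\hat{u}_\lambda^*=\inf S_\lambda^+$, forcing $y_0=\hat{u}_\lambda^*$, a contradiction, and symmetrically $y_0\le0$ is impossible. Hence $y_0\in[\hat{v}_\lambda^*,\hat{u}_\lambda^*]\cap C^1(\overline{\Omega})$ is nodal. The main obstacles are the local minimizer property of $\hat{u}_\lambda^*,\hat{v}_\lambda^*$ in the second paragraph (resting on the coercivity of $\hat{w}_\lambda^\pm$, the concave term, and the $C^1$--$W^{1,p(z)}$ minimizer equivalence) and the triviality $C_k(\hat{w}_\lambda,0)=0$; the rest is a routine combination of the mountain pass theorem with the extremality of $\hat{u}_\lambda^*$ and $\hat{v}_\lambda^*$.
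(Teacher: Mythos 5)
Your proposal is correct and follows essentially the same route as the paper: establish that $\hat{u}_\lambda^*$ and $\hat{v}_\lambda^*$ are local minimizers of $\hat{w}_\lambda$ via the truncated functionals $\hat{w}_\lambda^\pm$ and the identification $K_{\hat{w}_\lambda^\pm}=\{0,\hat{u}_\lambda^*\},\{0,\hat{v}_\lambda^*\}$, then apply the mountain pass theorem (using coercivity of $\hat{w}_\lambda$ for the $C$-condition) to get $y_0$ with $C_1(\hat{w}_\lambda,y_0)\neq0$, and rule out $y_0=0$ by the vanishing of all critical groups at the origin forced by the concave term in $H''_1(iv)$, concluding nodality from \eqref{eq71} and extremality. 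The only cosmetic differences are that you spell out the constant-sign exclusion and the degenerate infinite-$K_{\hat{w}_\lambda}$ case a bit more explicitly than the paper does.
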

\begin{proof}
  First we show that $\hat{u}_\lambda^*\in{\rm int}\,C_+$ and $\hat{v}_\lambda^*\in-{\rm int}\,C_+$ are local minimizers of the functional $\hat{w}_\lambda(\cdot)$. From \eqref{eq69} and \eqref{eq70}, we see that $\hat{w}_\lambda^+$ is coercive. Also, it is sequentially weakly lower semicontinuous. So, we can find $\overline{u}_\lambda^*\in W^{1,p(z)}(\Omega)$ such that
\begin{equation}\label{eq72}
  \hat{w}_\lambda^+(\overline{u}_\lambda^*)=\inf\left\{\hat{w}_\lambda^+(u):\:
u\in W^{1,p(z)}(\Omega)\right\}.
\end{equation}
If $u\in{\rm int}\,C_+$ and we choose $t\in (0,1)$ small so that at least we have $tu\leq \hat{u}_\lambda^*$ (recall that $\hat{u}_\lambda^*\in{\rm int}\,C_+$). Then on account of hypothesis $H''_1(iv)$ and since $\tau_+<q_-$, for $t\in (0,1)$ even smaller, we have
\begin{eqnarray*}
% \nonumber % Remove numbering (before each equation)
  && \hat{w}_\lambda^+(tu)<0, \\
  &\Rightarrow& \hat{w}_\lambda^+(\overline{u}_\lambda^*)<0=\hat{w}_\lambda^+(0) \mbox{ (see \eqref{eq72}), } \\
  &\Rightarrow& \overline{u}_\lambda^*\not=0.
\end{eqnarray*}
Since $\overline{u}_\lambda^*\in K_{w^+_\lambda}$ (see \eqref{eq72}), from \eqref{eq71} we infer that
$$
\overline{u}_\lambda^*=\hat{u}_\lambda^* \in{\rm int}\,C_+.
$$
It is clear from \eqref{eq69} and \eqref{eq70} that
\begin{eqnarray*}
% \nonumber % Remove numbering (before each equation)
  && \hat{w}_\lambda\Big|_{C_+}=\hat{w}_\lambda^+\Big|_{C_+}, \\
  &\Rightarrow& \hat{u}_\lambda^* \mbox{ is a local $C^1(\overline{\Omega})$-minimizer of $w_\lambda(\cdot)$, } \\
  &\Rightarrow& \hat{u}_\lambda^* \mbox{ is a local  $W^{1,p(z)}(\Omega)$-minimizer of $w_\lambda(\cdot)$} \\
  && \mbox{ (see Gasinski \& Papageorgiou \cite{5Gas-Pap}). }
\end{eqnarray*}

Similarly we show that $\hat{v}_\lambda^*\in-{\rm int}\,C_+$ is a local minimizer of $\hat{w}(\cdot)$. This time we work with $\hat{w}_\lambda^-(\cdot)$. We may assume that
\begin{equation}\label{eq73}
  K_{\hat{w}_\lambda} \mbox{ is finite. }
\end{equation}
Otherwise, on account of \eqref{eq71} and the extremality of $\hat{u}_\lambda^*$ and $\hat{v}_\lambda^*$, we have a whole sequence of distinct nodal solutions and so we are done. We may assume that
$$
\hat{w}_\lambda(\hat{v}_\lambda^*)\leq \hat{w}_\lambda(\hat{u}_\lambda^*).
$$
The reasoning is similar, if the opposite inequality holds. From the fact that $\hat{u}_\lambda^*$ is a local minimizer of $w_\lambda(\cdot)$, from \eqref{eq73} and by using Theorem 5.7.6 of Papageorgiou, R\u adulescu \& Repov\v s \cite[p. 449]{9Pap-Rad-Rep}, we can find $\rho\in(0,1)$ small such that
\begin{equation}\label{eq74}
  \left\{
    \begin{array}{ll}
      \hat{w}_\lambda(\hat{v}_\lambda^*)\leq \hat{w}_\lambda(\hat{u}_\lambda^*)<\inf\left\{\hat{w}(u):\: \|u-\hat{u}_\lambda^*\|=\rho\right\}=\hat{m}_\lambda, &\\
      \|\hat{v}_\lambda^*-\hat{u}_\lambda^*\|>\rho.&
    \end{array}
  \right.
\end{equation}
Evidently, the functional $\hat{w}_\lambda(\cdot)$ is coercive (see \eqref{eq69} and recall that $\vartheta>-\lambda$). So, it satisfies the $C$-condition (see Proposition 5.1.15 of \cite[p. 369]{9Pap-Rad-Rep}). Then using also \eqref{eq74}, we see that we can apply the mountain pass theorem and produce $y_0\in W^{1,p(z)}(\Omega)$ such that
\begin{eqnarray*}
% \nonumber % Remove numbering (before each equation)
  && y_0\in K_{w_\lambda}\subseteq[\hat{v}_\lambda^*,\hat{u}_\lambda^*]\cap C^1(\overline{\Omega}) \mbox{ (see \eqref{eq71}), } \\
  && \hat{w}_\lambda(\hat{v}_\lambda^*)\leq \hat{w}_\lambda(\hat{u}_\lambda^*)<\hat{m}_\lambda\leq \hat{w}_\lambda(y_0) \mbox{ (see \eqref{eq74}). }
\end{eqnarray*}
From the above we see that
$$
y_0\not\in\{\hat{u}_\lambda^*,\hat{v}_\lambda^*\}.
$$
From Theorem 6.5.8 of \cite[p. 527]{9Pap-Rad-Rep} we have
\begin{equation}\label{eq75}
  C_1(\hat{w}_\lambda, y_0)\not=0.
\end{equation}

On the other hand, hypothesis $H_1(iv)$ and Proposition 3.7 of Papageorgiou \& R\u adulescu \cite{8Pap-Rad}, imply that
\begin{equation}\label{eq76}
  C_k(\hat{w}_\lambda,0)=0 \mbox{ for all }k\in \NN_0.
\end{equation}
Comparing \eqref{eq75} and \eqref{eq76}, we conclude that
\begin{eqnarray}\nonumber
% \nonumber % Remove numbering (before each equation)
  && y_0\not=0, \\
  &\Rightarrow& y_0\not\in \{0,\hat{u}_\lambda^*,\hat{v}_\lambda^*\} \label{eq77}.
\end{eqnarray}
Since $y_0\in [\hat{v}_\lambda^*,\hat{u}_\lambda^*]\cap C^1(\overline{\Omega})$, the extremality of $\hat{u}_\lambda^*$, $\hat{v}_\lambda^*$ and \eqref{eq77} imply that $y_0\in C^1(\overline{\Omega})$ is a nodal solution of $(P_\lambda)$.
\end{proof}
So, we can state the following multiplicity theorem for our problem.

\begin{thm}\label{th18}
   If hypotheses $H_0$, $H''_1$ hold, then there exists $\tilde{\lambda}_0\in\RR$ such that
\begin{itemize}
  \item[(a)] for $\lambda=\tilde{\lambda}_0$, problem $(P_\lambda)$ has at least three nontrivial solutions
\begin{eqnarray*}
% \nonumber % Remove numbering (before each equation)
  && u_0\in{\rm int}\,C_+,\; v_0\in-{\rm int}\,C_+ \\
  && y_0\in[v_0,u_0]\cap C^1(\overline{\Omega}) \mbox{ nodal.}
\end{eqnarray*}
  \item[(b)] for all $\lambda>\tilde{\lambda}_0$, problem $(P_\lambda)$ has at least five nontrivial solutions
\begin{eqnarray*}
% \nonumber % Remove numbering (before each equation)
  && u_0,\hat{u}\in{\rm int}\,C_+,\ u_0\leq\hat{u},\ u_0\not=\hat{u}, \\
  && v_0,\hat{v}\in-{\rm int}\,C_+,\ \hat{v}\leq v_0,\ v_0\not=\hat{v}, \\
  && y_0\in[v_0,u_0]\cap C^1(\overline{\Omega}) \mbox{ nodal. }
\end{eqnarray*}
\end{itemize}
\end{thm}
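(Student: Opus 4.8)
The plan is to read off Theorem~\ref{th18} from the results already established, after first recording the (routine) observation that $H''_1$ is strong enough to run \emph{both} the positive-solution theory of Sections~3--4 and its mirror image for negative solutions. Indeed, restricting $H''_1$ to $\RR_+$ gives back $H_1$, so Theorem~\ref{th15} and Proposition~\ref{prop16} apply verbatim and produce the critical value $\lambda_*$, the set $\mathcal{L}^+=[\lambda_*,\infty)$, and for each $\lambda\in\mathcal{L}^+$ the smallest positive solution $\hat{u}_\lambda^*\in{\rm int}\,C_+$. Replacing $x$ by $-x$ turns $H''_1$ into the analogous hypotheses for $f(z,\cdot)\big|_{\RR_-}$, so the same arguments yield a critical value $\lambda^*$ with $\mathcal{L}^-=[\lambda^*,\infty)$, the negative analogue of Theorem~\ref{th15}, and for each such $\lambda$ a largest negative solution $\hat{v}_\lambda^*\in-{\rm int}\,C_+$. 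I then set $\tilde{\lambda}_0=\max\{\lambda_*,\lambda^*\}$, exactly as in the paragraph preceding Proposition~\ref{prop17}; note $\tilde{\lambda}_0\in\RR$ by Proposition~\ref{prop9} and its negative counterpart.

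For part~(b), fix $\lambda>\tilde{\lambda}_0$. Since $\lambda>\tilde{\lambda}_0\geq\lambda_*$, Theorem~\ref{th15}(a) yields a positive solution distinct from $\hat{u}_\lambda^*$; I would take $u_0:=\hat{u}_\lambda^*$ to be the minimal positive solution and $\hat{u}\in{\rm int}\,C_+$ to be any second one, so automatically $u_0\leq\hat{u}$ and $u_0\neq\hat{u}$. Symmetrically, with $v_0:=\hat{v}_\lambda^*$ the maximal negative solution and $\hat{v}\in-{\rm int}\,C_+$ a second one, we get $\hat{v}\leq v_0$, $\hat{v}\neq v_0$. Because $\lambda\geq\tilde{\lambda}_0$, Proposition~\ref{prop17} applies to the extremal pair $(\hat{v}_\lambda^*,\hat{u}_\lambda^*)=(v_0,u_0)$ and furnishes a nodal solution $y_0\in[v_0,u_0]\cap C^1(\overline{\Omega})$. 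The five solutions $u_0,\hat{u},v_0,\hat{v},y_0$ are pairwise distinct: $u_0,\hat{u}\in{\rm int}\,C_+$ and $v_0,\hat{v}\in-{\rm int}\,C_+$ have opposite sign, while $y_0$ changes sign and hence coincides with none of the four constant-sign solutions. This is precisely the list in~(b).

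For part~(a), put $\lambda=\tilde{\lambda}_0$. Since $\tilde{\lambda}_0\in\mathcal{L}^+$ (it equals $\lambda_*$ or exceeds it), Proposition~\ref{prop16} provides the minimal positive solution $u_0:=\hat{u}_{\tilde{\lambda}_0}^*\in{\rm int}\,C_+$; likewise $v_0:=\hat{v}_{\tilde{\lambda}_0}^*\in-{\rm int}\,C_+$ exists because $\tilde{\lambda}_0\in\mathcal{L}^-$. Proposition~\ref{prop17}, now with $\lambda=\tilde{\lambda}_0$, produces a nodal solution $y_0\in[v_0,u_0]\cap C^1(\overline{\Omega})$. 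The three solutions $u_0$, $v_0$, $y_0$ are again pairwise distinct by the sign argument, which gives~(a).

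The only genuine subtlety --- and hence the ``main obstacle'' --- is organisational rather than analytic: one must check that taking $u_0$, $v_0$ to be the \emph{extremal} constant-sign solutions (rather than the specific solutions delivered by Propositions~\ref{prop13} and~\ref{prop17}) is legitimate and simultaneously secures the orderings $u_0\leq\hat{u}$, $\hat{v}\leq v_0$ of part~(b) and keeps us in the range $\lambda\geq\tilde{\lambda}_0$ for which Proposition~\ref{prop17} was proved, so that the nodal solution really lies in $[\hat{v}_\lambda^*,\hat{u}_\lambda^*]$. Everything else --- finiteness of $\lambda_*$ and $\lambda^*$, the pair of constant-sign solutions on each side, the nodal solution, and the sign-based distinctness --- has already been carried out.
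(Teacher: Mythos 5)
Your proposal is correct and follows essentially the same route as the paper: Theorem \ref{th18} is simply the assembly of Theorem \ref{th15} (and its mirror image for negative solutions), the extremal constant-sign solutions from Proposition \ref{prop16} and its negative analogue, and the nodal solution of Proposition \ref{prop17}, with $\tilde{\lambda}_0=\max\{\lambda_*,\lambda^*\}$. Your choice of $u_0$, $v_0$ as the extremal constant-sign solutions is exactly what makes the orderings $u_0\leq\hat{u}$, $\hat{v}\leq v_0$ and the localization $y_0\in[v_0,u_0]$ come out automatically, which is also how the paper intends the statement to be read.
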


\subsection*{Acknowledgments}
The authors would like to thank the referee for his/her comments and remarks.
The research was supported by the Slovenian Research
Agency program P1-0292.
The research of Vicen\c tiu D. R\u adulescu was supported by a grant of the Romanian Ministry of Research, Innovation and Digitization, CNCS/CCCDI--UEFISCDI, project number PCE 137/2021, within PNCDI III.

\end{document}